\newtheorem{theorem}{Theorem}[section]
\newtheorem{proposition}{Proposition}[section]
\newtheorem{corollary}{Corollary}[section]
\theoremstyle{definition}
\newtheorem{definition}{Definition}[section]
\theoremstyle{remark}
\newtheorem{example}{Example}[section]
\begin{document}

\setlength\parindent{0pt}
\setlength{\parskip}{0.5em}

\title{Linearisability of divergence-free fields along invariant 2-tori}

\author{David Perrella}
\author{David Pfefferl{\'e}}
\author{Luchezar Stoyanov}
\affiliation{The University of Western Australia, 35 Stirling Highway, Crawley WA 6009, Australia}

\begin{abstract}
    We find conditions under which the restriction of a divergence-free vector field $B$ to an invariant toroidal surface $S$ is linearisable. The main results are similar in conclusion to Arnold's Structure Theorems but require weaker assumptions than the commutation $[B,\nabla\times B] = 0$. Relaxing the need for a first integral of $B$ (also known as a flux function), we assume the existence of a solution $u : S \to \mathbb{R}$ to the cohomological equation $B|_S(u) = \partial_n B$ on a toroidal surface $S$ mutually invariant to $B$ and $\nabla \times B$. The right hand side $\partial_n B$ is a normal surface derivative available to vector fields tangent to $S$. In this situation, we show that the field $B$ on $S$ is either identically zero or nowhere vanishing with $B|_S/\|B\|^2 |_S$ being linearisable. We are calling the latter the semi-linearisability of $B$ (with proportionality $\|B\|^2 |_S$). The non-vanishing property relies on Bers' results in pseudo-analytic function theory about a generalised Laplace-Beltrami equation arising from Witten cohomology deformation. With the use of de Rham cohomology, we also point out a Diophantine integral condition where one can conclude that $B|_S$ itself is linearisable. The linearisability of $B|_S$ is fundamental to the so-called magnetic coordinates, which are central to the theory of magnetically confined plasmas.
\end{abstract}

\maketitle

\section{Introduction}

The linearisability of a vector field on a 2-torus is a well understood dynamical problem~\cite{SternbergCelestial,KocsardCohomological}. A vector field $X$ on a 2-torus $S$ is said to be \emph{linearisable} if there exists numbers $a,b \in \mathbb{R}$ and a diffeomorphism $\Phi : S \to \mathbb{R}^2/\mathbb{Z}^2$ such that $\Phi_* X$ is a constant vector field on $\mathbb{R}^2$ lowered to $\mathbb{R}^2/\mathbb{Z}^2$. That is,
\begin{equation}\label{lineq}
\Phi_* X =  a \frac{\partial}{\partial x} + b \frac{\partial}{\partial y}.
\end{equation}

A slightly weaker condition for $X$ is the existence of a positive function $f$ on $S$ for which $X/f$ is linearisable. We will say that such an $X$ is \emph{semi-linearisable with proportionality $f$}. This is equivalent to the existence of a coordinate system in which the field-lines of $X$ are straight~\cite{SternbergCelestial}. The field-lines of $X$ are known as windings in such a coordinate system~\cite{arnold1974asymptotic}.

This paper mainly concerns these properties when $X$ is the vector field induced on an invariant 2-torus of a divergence-free vector field in an oriented Riemannian 3-manifold $M$ with (or without) boundary. The motivation lies in producing and understanding magnetohydrodynamics (MHD) equilibria~\cite{kruskal_1958,hamada-1962,dhaeseleer_1991,etnyre-ghrist}, which are solutions to the system of equations
\begin{align*}
\nabla \cdot B &= 0,&
(\nabla \times B) \times B &= \nabla \rho,&
 B \cdot n &= 0 \text{ on } \partial M,
\end{align*}
where $\rho$ is a function on $M$ interpreted as pressure, $B$ is the magnetic field in $M$, and $n$ is the outward unit normal of $\partial M$.

The conjecture of Grad~\cite{grad1967toroidal,bruno-laurence_1996} remains unsettled; that smooth solutions with $p$ admitting toroidally nested level sets only exist if $M$ has a continuous isometry. One way to better understand this conjecture is to prove necessary structural features of solutions if they exist. A contribution by Arnold in this regard is his structure theorems~\cite{arnold1966topology,arnold-1966,arnold1974asymptotic}. In particular, Arnold obtained the following linearisablility result.

\begin{theorem}[\citet{arnold1974asymptotic}]
Suppose that $\rho$ has a closed (compact, without boundary) regular level set. Then, the connected components of this level set are invariant tori of $B$ and $\nabla \times B$ and in some neighbourhood $U\subset M$ of such a component, there exist a diffeomorphism $\Phi : U \to \mathbb{R}^2/\mathbb{Z}^2 \times I$ where $I \subset \mathbb{R}$ is an interval, such that
\begin{align*}
\Phi_* B|_U &= a(z) \frac{\partial}{\partial x} + b(z) \frac{\partial}{\partial y},&
\Phi_* \nabla \times B |_U &= c(z) \frac{\partial}{\partial x} + d(z) \frac{\partial}{\partial y},
\end{align*}
where $a,b,c,d : I \to \mathbb{R}$ are smooth functions and $z$ is the projection onto the factor $I$.
\end{theorem}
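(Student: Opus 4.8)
The plan is to read the integrability structure straight out of the force-balance law and then to linearise via a pair of commuting flows.

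First I would take the scalar product of $(\nabla\times B)\times B=\nabla\rho$ with $B$ and then with $J:=\nabla\times B$. Since a cross product is orthogonal to each of its factors, this yields $B\cdot\nabla\rho=0$ and $J\cdot\nabla\rho=0$, so $\rho$ is a common first integral and the flows of $B$ and $J$ preserve every level set of $\rho$. A regular, compact, boundaryless level set $S_c=\rho^{-1}(c)$ is then a closed embedded surface invariant under both flows, and I take one of its connected components as the candidate torus. On $S_c$ regularity gives $\nabla\rho\neq 0$, so $J\times B=\nabla\rho$ is nowhere zero; hence $B$ and $J$ are both nonvanishing and pointwise linearly independent on $S_c$, and both are tangent to it (being orthogonal to $\nabla\rho$). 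Two everywhere-independent tangent fields trivialise $TS_c$, forcing $\chi(S_c)=0$, and a connected closed orientable surface with vanishing Euler characteristic is a $2$-torus.

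The key algebraic input is that $B$ and $J$ commute. I would combine the identity $\nabla\times(B\times J)=B(\nabla\cdot J)-J(\nabla\cdot B)-[B,J]$ with $\nabla\cdot B=0$ and $\nabla\cdot J=\nabla\cdot(\nabla\times B)=0$ to get $\nabla\times(B\times J)=-[B,J]$; since $B\times J=-\nabla\rho$ and $\nabla\times\nabla\rho=0$, this shows $[B,J]=0$ throughout $M$. On each torus the commuting, pointwise-independent pair $(B,J)$ therefore has commuting complete flows (completeness by compactness), so $(s,t)\cdot p:=\phi^B_s\phi^J_t(p)$ defines a smooth $\mathbb{R}^2$-action. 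Its orbit map has full-rank differential, making orbits open; by connectedness the torus is a single orbit, and the stabiliser is a rank-$2$ lattice $\Lambda$, giving $T\cong\mathbb{R}^2/\Lambda$ in which $B$ and $J$ are the constant images of $\partial_s,\partial_t$. A linear change normalising $\Lambda$ to $\mathbb{Z}^2$ produces the desired $\mathbb{R}^2/\mathbb{Z}^2$ with $B,J$ constant.

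To obtain the neighbourhood statement I would use that $\rho$ is a submersion near the central torus, so flowing along $\nabla\rho$ identifies $U\cong S_c\times I$ and supplies the transverse coordinate $z$. Transporting a fixed basis of $H_1(S_c;\mathbb{Z})$ to each fibre and defining the angle coordinates fibrewise by the joint flow of $(B,J)$ gives $\Phi$, with the constants $a,b,c,d$ reading off the components of $B,J$ on each torus and depending smoothly on $z$ by smooth parameter-dependence of flows. I expect the main obstacle to be exactly this parametrised gluing: choosing the period lattice and a basepoint smoothly across the family so that the per-torus linearisations assemble into a single diffeomorphism. I would resolve it by pulling the entire construction back from the central torus through the product structure $U\cong S_c\times I$ and invoking smooth dependence of the commuting flows on the transverse parameter, shrinking $U$ if necessary so that regularity, and hence the independence of $B$ and $J$, persists.
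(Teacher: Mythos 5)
Your argument is correct and is essentially the classical Arnold proof, which is also the route underlying this paper: the commutation $[B,\nabla\times B]=0$ from $\nabla\times(B\times J)=-[B,J]$, the transitive $\mathbb{R}^2$-action with rank-$2$ stabiliser on each component, and the fibrewise assembly over the transverse coordinate are exactly the contents of Propositions \ref{Arnoldcommute} and \ref{Arnoldcommute3D} in Appendix \ref{app:rottranslinear} (the theorem itself is cited from Arnold rather than reproved in the main text). The only step you leave somewhat informal --- choosing the period lattice smoothly in $z$ --- is handled in Proposition \ref{Arnoldcommute3D} by integrating the coframe dual to $(B,\nabla\times B,\partial_z)$ over a fixed basis of $H_1(S)$, which is the concrete realisation of your ``smooth parameter-dependence'' remark.
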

Coordinates in which both $B$ and $\nabla\times B$ are linear are known as Hamada coordinates~\cite{hamada-1962}. In magnetic confinement fusion, coordinates systems in which $B$ is linear (regardless of $\nabla \times B$) are called "straight field-lines" coordinates~\cite{dhaeseleer_1991,boozer-1982}.

Arnold also remarked~\cite{arnold1974asymptotic} that in the case of $\nabla \rho = 0$, assuming $B$ is non-vanishing, then necessarily
\begin{itemize}
    \item[i)] $B$ is a Beltrami field, namely $\nabla \times B = \lambda B$ for some function $\lambda$, 
    \item[ii)] $\lambda$ is a first integral and the closed regular level sets of $\lambda$ are unions of tori, and
    \item[iii)] in a neighborhood $U\subset M$ of such a 2-torus, there exists a diffeomorphism $\Phi : U \to \mathbb{R}^2/\mathbb{Z}^2 \times I$ and a positive function $f : U \to \mathbb{R}$ such that,
    \begin{align*}
    \Phi_* (B|_U/f) &= a(z) \frac{\partial}{\partial x} + b(z) \frac{\partial}{\partial y}
    \end{align*}
    where again $a,b: I \to \mathbb{R}$ are smooth functions and $z$ is the projection onto the factor $I$.
\end{itemize}

In this paper, we will bring some more attention to this remark. More specifically, we show that it is not particularly a consequence of $B$ being Beltrami field or even that of the non-vanishing of $B$. For instance, if one assumes a first integral (we will also state an ``infinitesimal version" which features no first integral assumption), we have the following corollary of our methods.

\begin{corollary}\label{GeneralArnold3D}
Let $B$ be a vector field on $M$ which satisfies, for some function $\rho$,
\begin{align*}
\nabla \cdot B &= 0,& B \cdot \nabla \rho &= 0,& (\nabla \times B) \cdot \nabla \rho &= 0.
\end{align*}
Let $S$ be a closed connected component of a regular level set of $\rho$. Then, the following are equivalent
\begin{enumerate}
    \item $B$ is not identically zero on $S$ and $S$ is a 2-torus
    \item $B$ is non-vanishing on $S$
\end{enumerate}
and when this is the case, additionally assuming $S \cap \partial M = \emptyset$ (or $S \subset \partial M$), there exists a neighbourhood $U\subset M$ of $S$ and a diffeomorphism $\Phi : U \to \mathbb{R}^2/\mathbb{Z}^2 \times I$ where $I$ is an open (or half-open) interval in $\mathbb{R}$ such that $B|_U$ is non-vanishing and
\begin{equation*}
\Phi_* \left(\frac{B|_U}{\|B\|^2 |_U}\right) = a(z) \frac{\partial}{\partial x} + b(z) \frac{\partial}{\partial y}
\end{equation*}
where $a,b : I \to \mathbb{R}$ are smooth functions and $z$ is the projection onto the factor $I$.
\end{corollary}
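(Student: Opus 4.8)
The plan is to deduce this statement from the paper's main results by verifying their hypotheses on $S$ and then upgrading the two-dimensional conclusion to the three-dimensional tubular picture. First I would record that $S$ is mutually invariant under $B$ and $J := \nabla\times B$: the hypotheses $B\cdot\nabla\rho = 0$ and $J\cdot\nabla\rho = 0$ say exactly that both fields are tangent to the regular level set $S$, whose normal direction is spanned by $\nabla\rho$. Writing $i:S\hookrightarrow M$ for the inclusion and $\beta := B^\flat$, the identity $d\beta = \iota_J\mu$ (with $\mu$ the Riemannian volume form) gives $d(i^*\beta) = i^*\iota_J\mu = 0$, because $J$ being tangent to the surface $S$ makes $\iota_J\mu$ pull back to zero. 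Hence $\beta_S := i^*\beta = (B|_S)^\flat$ is a closed $1$-form on $S$ with $\beta_S(B|_S) = \|B\|^2|_S$.

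The heart of the reduction is producing a solution $u$ of the cohomological equation $B|_S(u) = \partial_n B$, which I would obtain geometrically from the first integral. By the coarea formula the volume form disintegrates along the level sets of $\rho$ as $\mu = \|\nabla\rho\|^{-1}\,\omega_{\{\rho=c\}}\wedge d\rho$; since $B$ preserves $\mu$ (divergence-free), preserves $\rho$, and is tangent to each level set, it must preserve the conditional area form $\Omega := \|\nabla\rho\|^{-1}\omega_S$ on $S$. Equivalently, with $u := -\log\|\nabla\rho\| \in C^\infty(S)$ one has $L_{B|_S}(e^u\omega_S) = 0$, i.e. $B|_S(u) = -\operatorname{div}_S(B|_S)$. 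A short computation with the divergence decomposition across $S$ (using $B\cdot n = 0$ on $S$) identifies $-\operatorname{div}_S(B|_S)$ with the normal derivative $\partial_n B = \delta_S\beta_S$, so $u$ solves the cohomological equation. With mutual invariance and this $u$ in hand, the paper's main theorem applies and yields the dichotomy that $B|_S$ is either identically zero or nowhere vanishing, in the latter case $S$ being a $2$-torus with $B|_S/\|B\|^2$ linearisable. This gives the equivalence of (1) and (2): (2)$\Rightarrow$(1) is Poincaré--Hopf (so $\chi(S)=0$, and $S$ is orientable as a co-oriented level set in an oriented $M$, hence a torus), while (1)$\Rightarrow$(2) is the dichotomy with the identically-zero alternative excluded.

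It remains to pass from $S$ to a neighbourhood $U$. I would take the transverse coordinate to be a reparametrisation of $\rho$ itself: for $z$ near the given value the level sets $S_z := \{\rho = z\}$ are, near the chosen component, closed invariant tori to which the same argument applies with invariant area forms $\|\nabla\rho\|^{-1}\omega_{S_z}$ depending smoothly on $z$. On each slice the semi-linearisation arises from the closed, nowhere-zero form $\beta_{S_z}$ (a transverse closed $1$-form with $\beta_{S_z}(B|_{S_z}/\|B\|^2)=1$) together with a cross-section whose Poincaré return map preserves the smooth invariant measure and is therefore smoothly conjugate to a rigid rotation; assembling these over $z$ produces a diffeomorphism $\Phi:U\to\mathbb{R}^2/\mathbb{Z}^2\times I$ with $\Phi_*(B/\|B\|^2) = a(z)\partial_x + b(z)\partial_y$. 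Shrinking $U$ preserves the non-vanishing of $B$ (an open condition), and the two cases $S\cap\partial M=\emptyset$ and $S\subset\partial M$ give respectively an open or half-open interval $I$.

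The main obstacle lies in two places. The deep input --- that $B|_S$ cannot have isolated zeros and so is either trivial or nowhere vanishing --- is supplied by the paper's use of Bers' pseudo-analytic theory for the generalised Laplace--Beltrami equation, and I would simply invoke it. The work genuinely belonging to the corollary is the smooth parametrised assembly above: one must verify that the cross-section, the return maps, and their measure-induced conjugacies to rotations can be chosen to depend smoothly on the transverse parameter $z$, and that the closed-form coordinate coming from $\beta_{S_z}$ is compatible with this choice, so that $a,b$ are genuinely smooth in $z$. I note that the conjugacy step is Diophantine-free precisely because a smooth circle diffeomorphism preserving a smooth positive measure is smoothly conjugate to a rotation (push the measure to Lebesgue, after which an orientation-preserving diffeomorphism preserving Lebesgue measure has unit derivative and is itself a rotation); this is what makes semi-linearisability, rather than full linearisability of $B|_S$, the robust conclusion.
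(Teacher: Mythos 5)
Your reduction of the surface-level statement to the paper's main theorem is sound and follows the paper's own route: the tangency of $B$ and $\nabla\times B$ to $S$ gives $d(i^*\beta)=0$, and your disintegration argument $\mu = \|\nabla\rho\|^{-1}\omega_{\{\rho=c\}}\wedge d\rho$ is a clean alternative derivation of the fact that $u=-\log\|\nabla\rho\|$ solves the cohomological equation (the paper gets the same $u$ by the direct bracket computation $[N,B](\rho)=-B(\|\nabla\rho\|)$ in Proposition \ref{for first integrals}); the invariant measure $\|\nabla\rho\|^{-1}\mu_S$ you produce is exactly the $P$-harmonicity condition fed into Theorem \ref{mainresult}, and the equivalence of (1) and (2) via the zero/non-vanishing dichotomy plus Poincar\'e--Hopf is the paper's argument verbatim.

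The gap is in your passage from $S$ to the neighbourhood $U$. You propose to semi-linearise each slice by taking a cross-section whose return map preserves a smooth measure and is therefore conjugate to a rotation. Two things are missing. First, the existence of a global closed transversal on each torus is not supplied by the transverse closed $1$-form $\beta_{S_z}$: the kernel foliation of a closed non-vanishing $1$-form on $\mathbb{T}^2$ has closed leaves only when its cohomology class is projectively rational, so in general there is no closed leaf to serve as your section, and you would need a separate argument (absence of Reeb components, etc.) to produce one. Second, even granted a section, the return-map construction only straightens the \emph{orbits}; it yields $\Phi_*(B/\|B\|^2)=h\cdot(a\,\partial_x+b\,\partial_y)$ for some positive function $h$, and removing $h$ amounts to solving a further cohomological equation that generically requires a Diophantine condition --- exactly what the corollary must avoid. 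The paper sidesteps both problems at once: since $\omega=X^\flat$ and $\eta=P\star\omega$ are both closed, their dual frame $\bigl(X/\|X\|^2,\,Y\bigr)$ with $Y=\nabla\rho\times B/(\|\nabla\rho\|^2\|B\|^2)$ commutes, so the $\mathbb{R}^2$-action argument (Propositions \ref{Arnoldcommute} and \ref{Arnoldcommute3D}) linearises $B/\|B\|^2$ genuinely and, because $Y$ is given by an explicit smooth formula on $U$, the smooth dependence on $z$ that you flag as ``remaining work'' comes for free from the Flowout Theorem. You should replace the cross-section step by this commuting-frame argument, or else prove the existence of a smoothly varying family of closed transversals with constant return time, which is substantially harder.
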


A strong Beltrami field $B$ satisfying $\nabla \times B = \lambda B$ where $\lambda$ is a constant, can have more complicated topology~\cite{arnold1974asymptotic,etnyre-ghrist} despite being MHD equilibria solutions~\cite{bruno-laurence_1996,dewar_2008}. Less has been said about the structure (in relation to linearisability) of strong Beltrami fields on invariant tori. In particular, if $\partial M$ has a toroidal connected component and $B \cdot n = 0$, this component is an invariant 2-torus of $B$ and it is reasonable to ask of the linearisability properties of $B$ here. This question has been linked with the aforementioned conjecture of Grad. In the Euclidean context,~\citet{enciso2021mhd} have shown that under non-degeneracy assumptions of a toroidial domain $M \subset \mathbb{R}^3$, piece-wise smooth MHD equilibria with non-constant pressure exist. The non-degeneracy assumptions include assuming the existence of a strong Beltrami field $B$ for which $\partial M$ is a Diophantine invariant 2-torus. That is, as in the context of the KAM Theorem, the vector field $X$ (induced from $B$ on $\partial M$) can be written in the form of Equation (\ref{lineq}) where the vector $(a,b)$, known as \emph{the frequency vector of $X$ with respect to $\Phi$}, is a Diophantine vector (a notion which will be defined in Section \ref{sec:mainresdef}). Of course, the frequency being Diophantine does not depend on the diffeomorphism $\Phi$ chosen (see, for instance, Propositions \ref{correctnessrotationaltransform} and \ref{compatibilityrotationaltransform}). The authors managed to show that the so-called thin toroidal domains are generically non-degenerate~\cite{enciso2021mhd}. In view of Corollary \ref{GeneralArnold3D}, the existence of a first integral of $B$ will ensure some structure of $B$. Although, in face of this complicated topology, there is no reason to expect that a first integral exists.

As mentioned, the ``infinitesimal version" of Corollary \ref{GeneralArnold3D} does not assume a first integral. We will now state this version in the case most relevant to strong Beltrami fields; namely when $M$ is embedded in $\mathbb{R}^3$ and $S$ is a toroidal (connected) component of the boundary $\partial M$. For instance, relevant to the Stepped Pressure Equilibrium Code ~\cite{hudson2012computation} (SPEC, a program which numerically solves for MHD equilibria), $S$ could be taken as either component of the boundary $\partial M$ when $M$ is a \emph{hollow torus}, that is, when $M$ is diffeomorphic to $\mathbb{R}^2/\mathbb{Z}^2 \times [0,1]$.

\begin{corollary}\label{advertcorollary}
Let $M$ be embedded in $\mathbb{R}^3$ with the inherited Euclidean structure. Let $S$ with a toroidal boundary component of $M$. Let $n : \partial M \to \mathbb{R}^3$ be the outward unit normal on $M$. Let $B$ be a vector field on $M$ satisfying, for some $\lambda \in \mathbb{R}$,
\begin{align*}
\nabla \cdot B &= 0,&
\nabla \times B &= \lambda B,&
B|_{\partial M} \cdot n &= 0.
\end{align*}
Consider $B|_{S}$, the vector field $B$ along $S$. Then, the following are equivalent.
\begin{enumerate}
    \item There exists a solution $u \in C^{\infty}(S)$ to the cohomological equation,
    \begin{equation*}
    B|_{S}(u) = \partial_n B.
    \end{equation*}
    \item The vector field $B|_{S}$ preserves a top-form $\mu$ on $S$.
    \item Either $B = 0$, or $B$ is non-vanishing on $S$ and $B|_{S}$ is semi-linearisable with proportionality $\|B\|^2|_{S}$.
\end{enumerate}
With such a solution $u$ and closed curves $C_1,C_2 : [0,1] \to S \subset \mathbb{R}^3$ whose homology classes generate the first homology $H_1(S)$, form the integrals
\begin{equation*}
I(C_i,u) \coloneqq \int_{0}^1 \exp{(-u(C_i(t)))} \det(C_i'(t),B(C_i(t)),n(C_i(t)))dt,~i \in \{1,2 \}.
\end{equation*}
If $(I(C_1,u),I(C_2,u))$ is Diophantine, then $S$ is a Diophantine invariant 2-torus for $B|_{S}$. The vector $(I(C_1,u),I(C_2,u))$ being Diophantine is independent of the solution $u$ and curves $C_1,C_2$ chosen.
\end{corollary}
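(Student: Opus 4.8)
The plan is to recognise the integrals $I(C_i,u)$ as the periods of a single closed $1$-form on $S$ and then to feed the resulting period vector into Kolmogorov's smooth linearisation theorem for measure-preserving torus flows. First I would dispose of the degenerate case: if $B=0$ on $S$ all integrals vanish and both assertions are vacuous, so assume $B\neq 0$. By the established equivalence (1)$\Leftrightarrow$(3), $X\coloneqq B|_{S}$ is then nowhere vanishing. Writing $\omega_S$ for the induced area form and $\mu\coloneqq e^{-u}\omega_S$, the cohomological equation together with the surface-divergence identity $\mathrm{div}_S(B|_S)=\partial_n B$ underlying (1)$\Leftrightarrow$(2) gives $\mathcal{L}_X\mu=e^{-u}(\partial_n B-X(u))\,\omega_S=0$, so $\mu$ is a smooth invariant area form and, by Cartan's formula with $d\mu=0$, the $1$-form $\iota_X\mu=e^{-u}\iota_X\omega_S$ is closed. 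A triple-product identity in $\mathbb{R}^3$ (using $\omega_S=\iota_n(dx\wedge dy\wedge dz)$ on $TS$ and $B\cdot n=0$) identifies the integrand as $\det(C_i',B,n)=-(\iota_X\omega_S)(C_i')$, whence $I(C_i,u)=-\int_{C_i}\iota_X\mu=-\langle[\iota_X\mu],[C_i]\rangle$. Thus $(I(C_1,u),I(C_2,u))$ is, up to an overall sign, the period vector of the de Rham class $[\iota_X\mu]\in H^1_{\mathrm{dR}}(S)$ in the basis $[C_1],[C_2]$.

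For the main implication assume this period vector is Diophantine; in particular it is nonzero with irrational direction. I would invoke Kolmogorov's theorem: a smooth, nowhere-vanishing vector field on a $2$-torus preserving a smooth positive measure and having Diophantine rotation vector is smoothly conjugate to a constant field. To verify the hypotheses and read off the frequency I would use the semi-linearisability already granted: fix $\Psi$ with $\Psi_*(X/\|X\|^2)=\alpha_0\partial_x+\beta_0\partial_y$, so the orbit foliation has slope $\beta_0/\alpha_0$. Were this slope rational one could take $C_1$ in the closed-orbit class and obtain $I(C_1,u)=\int_{C_1}\mu(X,C_1')\,dt=0$, contradicting the (full-rank) Diophantine assumption; so the slope is irrational and the rotation vector is Diophantine. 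Kolmogorov's theorem then yields $\Phi$ with $\Phi_*X=a\partial_x+b\partial_y$, and invariance of $\mu$ under this minimal linear flow forces $\Phi_*\mu=g_0\,dx\wedge dy$ with $g_0=\int_S\mu>0$ constant. Consequently the period vector equals $(a,b)$ up to the positive factor $g_0$ and the unimodular change of basis relating $\Phi(C_i)$ to the standard cycles. Since positive scalings and $GL(2,\mathbb{Z})$ changes of basis preserve the Diophantine property (Propositions \ref{correctnessrotationaltransform} and \ref{compatibilityrotationaltransform}), $(a,b)$ is Diophantine and $S$ is a Diophantine invariant $2$-torus for $X$.

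For the independence statements, a change of homology basis acts on the period vector by an element of $GL(2,\mathbb{Z})$ and hence preserves Diophantineness, giving independence of $C_1,C_2$. For independence of $u$, two solutions obey $X(u-u')=0$, so $\mu_{u'}=e^{u-u'}\mu_u$ with orbit-invariant density, and in $\Psi$-coordinates $\iota_X\mu_u=(\phi\rho_u)(\alpha_0\,dy-\beta_0\,dx)$ with $\phi\rho_u$ orbit-invariant. If the slope is irrational, minimality forces $\phi\rho_u$ to be a positive constant $m_u$, so the period vectors for $u$ and $u'$ are the positive multiples $m_u,m_{u'}$ of one fixed vector and share the same Diophantine status; if the slope is rational, the choice of $C_1$ above gives $I(C_1,u)=0$ for every $u$, so the vector is never Diophantine. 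In either case the property is independent of $u$.

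I expect the main obstacle to be the smooth (rather than merely topological) linearisation step: correctly invoking Kolmogorov's theorem with its exact hypotheses and tracking that the frequency it produces is indeed the period vector up to a positive scalar and a unimodular change of basis. The remaining work — the triple-product identification of the integrand and the rational/irrational dichotomy used for $u$-independence — is routine once the invariant form $\mu$ and the semi-linearising chart $\Psi$ are in hand.
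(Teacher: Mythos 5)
Your proposal is sound on the Diophantine part, and its core identification coincides with the paper's: the integrand $\exp(-u)\det(C_i',B,n)$ is (up to sign) $P\star X^{\flat}$ evaluated on $C_i'$ with $P=e^{-u}$, i.e.\ the integrals are periods of the closed 1-form $i_X\mu$ annihilating $X$, which is exactly how the paper reads off the winding number via Theorem \ref{P-harmonic vector fields}(4). Where you diverge is in how the two remaining steps are discharged. For the linearisation you invoke Kolmogorov's theorem as a black box; the paper instead proves the needed statement as the last part of Theorem \ref{celestialmechanics}, by constructing a second commuting field $Y_u=uX+Y$ through the solvability of the cohomological equation over a Diophantine constant field (Proposition \ref{cohomologicalsolvability}) and then applying Proposition \ref{Arnoldcommute}. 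Both routes rest on the same small-divisor input, so this is largely packaging, though your version obliges you to verify separately that the rotation vector of the flow and the period vector of $[i_X\mu]$ agree up to a positive scalar and a unimodular change of basis --- which you do via unique ergodicity of the irrational linear flow. For the independence statements the paper's argument is case-free: Proposition \ref{correctnessrotationaltransform}(2) shows via Poincar{\'e} duality that any two closed 1-forms with nontrivial class annihilating a densely-non-vanishing field have linearly dependent cohomology classes, so the projective class (hence Diophantineness) is independent of $u$ and of the curves simultaneously; your rational/irrational dichotomy with minimality reaches the same conclusion but requires the semi-linearising chart first.

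One genuine omission: the corollary also asserts the equivalence of (1), (2) and (3), and you take (1)$\Leftrightarrow$(3) as ``established.'' The nontrivial point there is that statement (3) reads ``$B=0$'' on all of $M$, not merely on $S$: to pass from $B|_{S}=0$ to $B\equiv 0$ one needs the unique-continuation-type result for Beltrami fields (Gerner's Lemma 2.1, \cite{gerner2021typical}), which your argument never touches. The rest of the equivalence does follow from Corollary \ref{cohomology is area pres} and Theorem \ref{celestialmechanics} as you indicate.
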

In Corollary \ref{advertcorollary}, if the field is zero on the boundary component $S$, $B|_{S} = 0$, then from Gerner's result~\cite[Lemma 2.1]{gerner2021typical} we must have $B = 0$ on the entire domain $M$ (see also Proposition \ref{2dimBeltrami}). The function $\partial_n B$ in Corollary \ref{advertcorollary} and elsewhere will be defined in Section \ref{sec:mainresdef} of the main text. It represents a derivative in the normal direction which may be evaluated using any local extension of $n$. In particular, if a first integral $\rho$ of $B$ is constant and regular on $S$, then $u = -\ln\|\nabla \rho\||_{S}$ satisfies the cohomological equation in the corollary (this is Proposition \ref{for first integrals} in Section \ref{sec:mainresdef}).

Aside from the 2-torus, the techniques used to show the main results may also be easily applied to the 2-sphere. This gives a similar result to that obtained by~\citet{enciso2016beltrami} for divergence-free vector fields which are not necessarily Beltrami.

\begin{corollary}\label{divfreesphere}
Let $B$ be a divergence-free vector field on an oriented Riemannian 3-manifold $M$ with boundary. Suppose that $B$ and $\nabla \times B$ have a mutual first integral $\rho$ on $M$ with a connected component $S$ of a regular level set diffeomorphic to $\mathbb{S}^2$. Then, $B$ vanishes entirely in a neighbourhood of $S$.
\end{corollary}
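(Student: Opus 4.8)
The plan is to reduce the statement to a property of a single $1$-form on $S$ and then let the vanishing of $H^1(\mathbb{S}^2)$ do the work. Write $\alpha \coloneqq (B|_S)^{\flat}$ for the metric dual, on $S$, of the restricted field; since $B\cdot\nabla\rho = 0$ the field $B$ is genuinely tangent to $S$, so $\alpha$ is the induced $1$-form. First I would show that $\alpha$ is closed. Because $(\nabla\times B)\cdot\nabla\rho = 0$, the field $\nabla\times B$ is also tangent to $S$; pulling the identity $d\beta = \iota_{\nabla\times B}\,\mathrm{vol}_M$ (with $\beta = B^{\flat}$) back along the inclusion $j\colon S\hookrightarrow M$, and noting that $\mathrm{vol}_M(\nabla\times B,\cdot,\cdot)$ vanishes on $T_pS\times T_pS$ because all three arguments then lie in the $2$-plane $T_pS$, one gets $d\alpha = j^{*}d\beta = 0$.

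Next I would encode the divergence-free information as a \emph{conformally deformed} co-closedness. Since $\rho$ is constant and regular on $S$, Proposition~\ref{for first integrals} supplies a solution $u = -\ln\|\nabla\rho\|\,|_S$ of the cohomological equation $B|_S(u) = \partial_n B$. As in the equivalence underlying Corollary~\ref{advertcorollary} (items~1 and~2), this says precisely that $B|_S$ preserves the top-form $\mu = e^{-u}\,\mathrm{vol}_S$, i.e. $\mathcal{L}_{B|_S}\mu = 0$. Unwinding $\mathcal{L}_{B|_S}(e^{-u}\mathrm{vol}_S) = \operatorname{div}_S(e^{-u}B|_S)\,\mathrm{vol}_S$ then yields $\delta(e^{-u}\alpha) = 0$, the co-closedness of the rescaled form $e^{-u}\alpha$.

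Now the topology of the sphere finishes the argument \emph{without} any appeal to Bers' theory. Since $d\alpha = 0$ and $H^{1}_{\mathrm{dR}}(\mathbb{S}^2) = 0$, the closed form $\alpha$ is exact, $\alpha = dh$ for some $h\in C^{\infty}(S)$. Combined with the previous step, $h$ solves the divergence-form elliptic equation $\operatorname{div}_S(e^{-u}\nabla h) = 0$ on the closed surface $S$. Pairing against $h$ and integrating by parts, with no boundary contribution since $S$ is closed, gives $\int_S e^{-u}\|\nabla h\|^{2}\,\mathrm{vol}_S = 0$, so $\nabla h\equiv 0$, hence $\alpha = dh = 0$ and $B|_S = 0$. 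This is the sphere analogue of the torus dichotomy: on the torus the exactness step fails and one needs Bers' pseudo-analyticity to forbid zeros of $B|_S$, whereas on $\mathbb{S}^2$ exactness together with the maximum principle forces the restricted field to vanish outright.

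Finally I would spread the conclusion to a neighbourhood. As $\rho$ is regular on the compact $S$, the normalised gradient flow of $\rho$ furnishes a diffeomorphism $S\times(-\epsilon,\epsilon)\to U$ carrying each slice $S\times\{t\}$ onto a connected component $S_t$ of a regular level set of $\rho$, each diffeomorphic to $\mathbb{S}^2$. Every $S_t$ inherits the hypotheses of the corollary, so the argument above gives $B|_{S_t} = 0$ for all $t$; since $B$ is tangent to these leaves, which foliate $U$, we conclude $B\equiv 0$ on $U$. The step I expect to demand the most care is the second one, namely pinning down the sign and curvature conventions in the normal-derivative identity relating $\operatorname{div}_S(B|_S)$ and $\partial_n B$ so that the cohomological equation is genuinely equivalent to $\delta(e^{-u}\alpha) = 0$; but this is exactly the structural content already developed for Corollaries~\ref{GeneralArnold3D} and~\ref{advertcorollary}.
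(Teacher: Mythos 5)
Your proposal is correct and follows essentially the same route as the paper: closedness of the induced $1$-form from tangency of $\nabla\times B$ (Proposition \ref{d is normcurl}), deformed co-closedness from the first integral via Proposition \ref{for first integrals} and Corollary \ref{cohomology is area pres}, vanishing of $H^1_{\mathrm{dR}}(\mathbb{S}^2)$ to kill the form, and the gradient-flow foliation to spread the conclusion to a neighbourhood. The only cosmetic difference is that where the paper cites the isomorphism $\mathcal{H}^1_P(S')\cong H^1_{\mathrm{dR}}(S')$ of Proposition \ref{naturalclassiso}, you inline its proof (exactness on the sphere plus the integration-by-parts energy identity), which is the same adjointness argument; your passing mention of the maximum principle is unnecessary, as the energy identity already does the work.
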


Our other results about vector fields on a 2-torus will mostly be stated in terms of their winding number (or frequency ratios, as in \cite{arnold1974asymptotic}). We have employed a homology-dependent means of defining winding numbers, so that by way of de Rham cohomology, we may compute them with integrals like those in Corollary \ref{advertcorollary}. In the context of magnetic confinement~\cite{boozer2005physics,d2012flux}, the winding number corresponds to what is known as the rotational transform. The rotational transform plays an integral role in stability of magnetically confined plasmas~\cite{wesson2011tokamaks}. In a future paper, we will relate known rotational transform formulae to what is presented here.

This paper is structured as follows. First, in Section \ref{sec:mainresdef}, we state the main results with some preliminary definitions. In Section \ref{sec:proofofres}, we prove these results using Witten-deformed cohomology~\cite{witten1982supersymmetry} and the elliptic PDE theory developed by Bers~\cite{bers1953theory,bers1953partial}. In Section \ref{sec:exandapp}, we give some applications and examples including proofs of Corollaries \ref{GeneralArnold3D}, \ref{advertcorollary} and \ref{divfreesphere}. In Section \ref{sec:RigidityWitten}, we discuss our use of the theory of Bers and how, in a certain sense, this generality is needed for the full result. In Appendices \ref{app:normcorrect} and \ref{app:rottranscorrect}, we establish correctness of the definitions. In Appendix \ref{app:rottranslinear}, we discuss some foundational properties of the winding number in relation to linearisability.

\section{Main results and definitions}\label{sec:mainresdef}

In this section, we state the main results. For this, we will need to first define the normal surface derivative mentioned in the Introduction. For terminology with smooth manifold theory, we follow Lee's book~\cite{Lee}. Unless otherwise stated, everything is assumed to be smooth for convenience.

\begin{definition}\label{normalderivative}
Let $M$ be a Riemannian manifold with boundary with metric $g$. Let $S$ be an orientable codimension $1$ embedded submanifold with (or without) boundary. Let $\mathcal{V} : S \to TM$ be a normal vector field to $S$, and $B:M\to TM$ be a vector field tangent to $S$. Define the function $\partial_{\mathcal{V}}B : S \to \mathbb{R}$ by,
\begin{equation*}
\partial_{\mathcal{V}}B|_p = g|_p([V,B]|_p,V|_p),
\end{equation*}
where $V : U \to TM$ is a local vector field extending $\mathcal{V}|_{U \cap S}$. This function is called the \emph{normal derivative of $B$ with respect to $\mathcal{V}$ along $S$}.
\end{definition}

We will show in Appendix \ref{app:normcorrect} that Definition \ref{normalderivative} is correct. With respect to this definition, the main part of our results giving Corollary \ref{advertcorollary} is the following.

\begin{theorem}\label{mainresult}
Let $M$ be an oriented Riemannian $3$-manifold with boundary. Let $S$ be an embedded 2-torus in $M$. Let $n : S \to TM$ be a unit normal for $S$. Let $B$ be a vector field on $M$ which satisfies,
\begin{align*}
\nabla \cdot B|_S &= 0,&
\nabla \times B|_S \cdot n &= 0,&
B|_S \cdot n &= 0.
\end{align*}
Consider the $\iota$-related vector field $\imath^*B$ on $S$ where $i : S \subset M$. If there exists a solution $u \in C^{\infty}(S)$ to the cohomological equation,
\begin{equation*}
du(\imath^*B) = \partial_n B,
\end{equation*}
then either $B|_S = 0$ or $B|_S$ is non-vanishing and $\imath^*B$ is semi-linearisable with proportionality $\|B\|^2|_S$.
\end{theorem}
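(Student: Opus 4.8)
The plan is to translate everything into the language of differential forms on $S$, where the cohomological equation and the divergence-free/curl conditions become algebraic relations among $1$-forms, and then to invoke the pseudo-analytic machinery (Bers' theory via Witten deformation) promised in the introduction to establish the non-vanishing dichotomy. Let me set up the key objects first.

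\textbf{Setup.} Let $\iota : S \hookrightarrow M$ be the inclusion, write $X := \iota^* B$ for the induced tangent field on $S$, and let $\mu_S$ be the area form on $S$ induced by the metric $g$. On an oriented Riemannian surface we have the Hodge star $\ast$ on $1$-forms, which is essentially a $90^\circ$ rotation. To $X$ we associate two natural $1$-forms: the metric dual $\alpha := (X)^\flat$ (so $\alpha = g(X,\cdot)$ restricted to $S$) and the flux form $\beta := \iota_X \mu_S$, which satisfy $\beta = \ast \alpha$. The plan is to show that the hypotheses force a single $1$-form built from $\alpha$ and the solution $u$ to satisfy a first-order elliptic system of Bers (pseudo-analytic) type.

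\textbf{Reformulating the hypotheses.} First I would show that the three pointwise conditions on $B$ along $S$ translate into closedness statements for the two forms. The condition $B|_S \cdot n = 0$ is what makes $X = \iota^* B$ tangent, so $\alpha$ and $\beta$ are well-defined. The divergence-free condition $\nabla\cdot B|_S = 0$ together with $\nabla\times B|_S \cdot n = 0$ should, via Stokes/Cartan calculus and the relation between ambient $\operatorname{div}$, $\operatorname{curl}$ and their surface restrictions, yield that $d\beta = 0$ (incompressibility of the flux of $X$) and that $d\alpha$ is controlled by the normal derivative $\partial_n B$. The key computation I expect here is that the tangential part of $\nabla\times B$ being normal-free gives $d\alpha = (\partial_n B)\,\mu_S$ up to the established identification; this is precisely where Definition~\ref{normalderivative} earns its keep, since $\partial_n B = g([V,B],V)$ measures exactly the failure of $\alpha$ to be closed. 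The cohomological equation $du(X) = \partial_n B$, i.e. $X(u) = \partial_n B$, then lets me absorb the non-closedness: I would consider the Witten-deformed form $e^{-u}\alpha$ (or equivalently the deformed differential $d_u = e^{u} d\, e^{-u}$) and compute $d(e^{-u}\alpha)$. The term from differentiating $e^{-u}$ produces $-du\wedge\alpha$, whose component along $\mu_S$ is $-X(u)\,\mu_S$, which cancels against $d\alpha = (\partial_n B)\mu_S$ exactly because of the cohomological equation. The expected conclusion of this step is that $e^{-u}\beta$ (equivalently the rotated form) is closed, so $X/e^{u}$ — or rather $X$ rescaled by the conformal factor — is an area-preserving field for a new top-form $\mu = e^{-u}\mu_S$; this already gives the implication toward statement (2) in Corollary~\ref{advertcorollary}, that $X$ preserves a top-form.

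\textbf{The non-vanishing dichotomy and semi-linearisability.} The crux, and the main obstacle, is the dichotomy: either $X$ vanishes identically or $X$ is nowhere zero. This is not a soft statement — a generic divergence-free field on $T^2$ has stagnation points — and it is where Bers' pseudo-analytic theory is essential rather than cosmetic. My plan is to package the pair $(\alpha, \beta)$, weighted by $e^{-u}$, into a complex-valued function $w = w(X)$ on $S$ (using an isothermal/conformal complex coordinate on the Riemannian torus) and to show that the closedness relations derived above are equivalent to $w$ satisfying a Bers equation $\partial_{\bar z} w = a w + b \bar w$ with bounded measurable coefficients. Solutions of such equations are \emph{pseudo-analytic}, and Bers' similarity principle \cite{bers1953theory,bers1953partial} says they factor as $w = e^{s} h$ with $h$ holomorphic and $s$ bounded; consequently their zeros are isolated and carry a well-defined positive index, exactly like zeros of holomorphic functions. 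Since $S$ is a $2$-torus, $\chi(S)=0$, so the Poincaré–Hopf/Bers index count forces the total zero index to vanish; because each zero contributes a strictly positive integer, there can be no zeros at all unless $w \equiv 0$, i.e. $X \equiv 0$. This is the step I expect to be delicate: verifying that the coefficients $a,b$ arising from $u$, the metric, and $\partial_n B$ are regular enough to apply Bers' theory, and confirming the sign/index bookkeeping on the torus.

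\textbf{Conclusion.} Once non-vanishing is established, semi-linearisability with proportionality $\|B\|^2|_S$ follows by exhibiting straight field-line coordinates for $X/\|B\|^2|_S$. Here I would use that a nowhere-vanishing field on $T^2$ preserving a smooth top-form is smoothly conjugate to a constant field after reparametrisation by the invariant measure — the classical result cited in \cite{SternbergCelestial} — and identify the conformal/measure factor produced above with $\|B\|^2|_S$. Concretely, the invariant density $e^{-u}\mu_S$ and the relation $\beta = \ast\alpha$ should combine so that the reparametrised field $X/\|B\|^2|_S$ is the one whose flow preserves the flat structure, giving the constant-coefficient normal form $a\,\partial_x + b\,\partial_y$; this matches the claimed proportionality because $\|B\|^2|_S$ is precisely the conformal factor relating the metric dual $\alpha$ to the flux form $\beta$ through $X$. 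The only remaining care is ensuring smoothness (not merely continuity) of the conjugating diffeomorphism, which follows from the smoothness of all data and elliptic regularity for the Bers solution.
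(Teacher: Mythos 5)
Your overall strategy --- pull everything back to $1$-forms on $S$, use the cohomological equation to produce a closed/co-closed (pseudo-analytic) structure, invoke Bers' theory on the genus-one Riemann surface for the zero dichotomy, and finish with a commuting-frame linearisation --- is the same as the paper's. In particular your ``similarity principle plus index count on the torus'' is exactly the content of the generalised Riemann--Roch statement the paper cites (a regular $(F,G)$-differential on a closed genus-one surface is identically zero or nowhere zero), so that step is fine in spirit. However, your reformulation step contains a concrete error that propagates into the final step.

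You have the roles of $\alpha = X^{\flat}$ and $\beta = i_X\mu_S = \star\alpha$ interchanged. The correct identities (Proposition \ref{d is normcurl} and Corollary \ref{cohomology is area pres}) are $d\alpha = g(\nabla\times B|_S,n)\,\mu_S = 0$, so the curl hypothesis makes the \emph{metric dual} closed with no $\partial_n B$ appearing, while $d\beta = (\partial_n B - \nabla\cdot B|_S)\,\mu_S = (\partial_n B)\,\mu_S$, so it is the \emph{flux form} whose failure to be closed is measured by $\partial_n B$. The cancellation you describe also only works for $\beta$: on a surface $du\wedge\beta = \langle du,\alpha\rangle\,\mu_S = X(u)\,\mu_S$, whereas the $\mu_S$-component of $du\wedge\alpha$ is $\mu_S(\nabla u,X)$, not $X(u)$; hence the correct deformed closedness is $d(e^{-u}\beta) = e^{-u}\bigl(\partial_n B - X(u)\bigr)\mu_S = 0$, and $d(e^{-u}\alpha)$ does not vanish. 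This is not merely notational, because the specific proportionality $\|B\|^2|_S$ in the conclusion comes from the fact that the two closed forms are $\alpha$ (unweighted) and $e^{-u}\beta$, with $\alpha(X)=\|X\|^2$ and $e^{-u}\beta(X)=0$: the frame dual to this closed coframe consists of commuting fields whose first member is exactly $X/\|X\|^2$, and Proposition \ref{Arnoldcommute} then linearises it. With your swapped identities the closed coframe would be $(e^{-u}\alpha,\beta)$, whose dual frame linearises $e^{u}X/\|X\|^2$, i.e.\ you would obtain proportionality $e^{-u}\|X\|^2$ rather than $\|X\|^2$. Relatedly, your appeal to ``non-vanishing field preserving a top-form is semi-linearisable'' only yields an \emph{unspecified} reparametrising factor; to pin it down to $\|B\|^2|_S$ you must use the closedness of $X^{\flat}$ itself, which is precisely the hypothesis $\nabla\times B|_S\cdot n = 0$ that your account misallocates.
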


A particular application of Theorem \ref{mainresult} is granted in the special context of first integrals as follows.

\begin{proposition}\label{for first integrals}
Let $M$ be an oriented Riemannian $3$-manifold with boundary. Let $S$ be a codimension 1 embedded submanifold with boundary in $M$. Let $B$ be a vector field on $M$ which satisfies $\nabla \cdot B|_S = 0$. Assume that $B$ has a first integral $\rho \in C^{\infty}(M)$ which is constant and regular on $S$. Then, $B|_S \cdot n = 0$ and, setting $u = -\ln\|\nabla \rho\||_S$,
\begin{equation*}
du(\imath^*B) = \partial_n B.
\end{equation*}
\end{proposition}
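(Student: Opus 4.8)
The plan is to use the unnormalised gradient $N := \nabla\rho$ as the organising object and to reduce both sides of the asserted identity to the Hessian of $\rho$, whose symmetry then delivers the equality. Throughout I write $\mathrm{Hess}\,\rho(X,Y) := g(\nabla_X N, Y)$ for the Levi-Civita Hessian.

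First I would establish tangency. Since $\rho$ is regular on $S$, the field $N = \nabla\rho$ is a nonvanishing normal to $S$ in a neighbourhood, so $n = \pm N/\|N\|$ along $S$; because $\rho$ is a first integral, $B\cdot n = \pm\frac{1}{\|N\|}g(B,\nabla\rho) = \pm\frac{1}{\|N\|}B(\rho) = 0$, which is the first claim and makes $\imath^*B$ well-defined. I would also record that $g(B,N) = B(\rho) = 0$ holds throughout the neighbourhood, not merely on $S$, since this is exactly what allows differentiation in the normal direction later.

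Next I would compute the right-hand side. Take the unit extension $V := N/\|N\|$ of the normal, defined wherever $\rho$ is regular, and note that $\partial_n B$ is insensitive to the sign of the normal (replacing $V$ by $-V$ leaves $g([V,B],V)$ unchanged), so the sign ambiguity above is harmless. Using the definition $\partial_n B = g([V,B],V)$ together with $|V|\equiv 1$ (whence $g(\nabla_X V, V) = 0$ for all $X$) and the torsion-free identity $[V,B] = \nabla_V B - \nabla_B V$, the expression collapses to $g(\nabla_V B, V)$. Differentiating the identically vanishing function $g(B,V)$ along $V$ gives $g(\nabla_V B, V) = -g(B,\nabla_V V)$, and expanding $\nabla_V V$ for $V = N/\|N\|$ (the tangential term dropping out because $g(B,N)=0$) yields $\partial_n B = -\frac{1}{\|N\|^2}\,\mathrm{Hess}\,\rho(N,B)$. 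For the left-hand side I would compute directly: since $B$ is tangent to $S$, $du(\imath^*B) = B(-\ln\|N\|)\big|_S$, and expanding $B\ln\|N\| = \frac{1}{2\|N\|^2}B\,g(N,N) = \frac{1}{\|N\|^2}\mathrm{Hess}\,\rho(B,N)$ gives $du(\imath^*B) = -\frac{1}{\|N\|^2}\mathrm{Hess}\,\rho(B,N)\big|_S$. The two agree precisely because the Hessian is symmetric, $\mathrm{Hess}\,\rho(N,B) = \mathrm{Hess}\,\rho(B,N)$.

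The only delicate point is the computation of $\partial_n B$: one must be disciplined about which identities hold only on $S$ versus in a full neighbourhood — the vanishing of $g(B,\nabla\rho)$ is needed off $S$, which is where the hypothesis that $\rho$ is a genuine first integral on $M$ enters — and one should invoke the correctness of Definition \ref{normalderivative} (Appendix \ref{app:normcorrect}) to justify using the convenient extension $V = N/\|N\|$ rather than an arbitrary one. Everything else is routine Levi-Civita bookkeeping, and I note that the divergence-free hypothesis plays no role in this particular identity.
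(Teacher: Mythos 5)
Your proof is correct, but it takes a different route from the paper's. The paper never touches the Levi--Civita connection: it uses the unit extension $N=\nabla\rho/\|\nabla\rho\|$ and the observation that $g(W,N)=W(\rho)/\|\nabla\rho\|$ for any $W$, so that $\partial_nB=g([N,B],N)=[N,B](\rho)/\|\nabla\rho\|$; expanding the bracket on $\rho$ and using $B(\rho)\equiv 0$ and $N(\rho)=\|\nabla\rho\|$ gives $-B(\|\nabla\rho\|)/\|\nabla\rho\|=B(-\ln\|\nabla\rho\|)$ in one line. You instead convert the bracket to covariant derivatives, reduce both sides to $\mathrm{Hess}\,\rho$, and close with its symmetry. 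The two arguments rest on the same elementary fact in different clothing --- the symmetry of $\mathrm{Hess}\,\rho$ is equivalent to $X(Y\rho)-Y(X\rho)=[X,Y](\rho)$, which is exactly what the paper exploits directly --- so your detour through $\nabla$ is longer but makes the mechanism more visible and would generalise to settings where one wants the Hessian anyway. Your side remarks are all sound and worth keeping: the sign-insensitivity of $\partial_nB$ under $n\mapsto -n$, the fact that $g(B,\nabla\rho)=0$ must hold in a neighbourhood (not just on $S$) to licence the normal differentiation, and the observation that $\nabla\cdot B|_S=0$ is not used in this identity.
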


Theorem \ref{mainresult} is proven by the fact that $\imath^*B$ is a $P$-harmonic vector field on the 2-torus. We will now discuss the definitions and main results concerning these fields.

Let $S$ be an oriented Riemannian manifold with differential $d$ and codifferential $\delta = (-1)^{nk+n+1}\star d \star$ where $\star$ is the Hodge star operator acting on $k$-forms and $n = \dim S$. Let $0 < P \in C^{\infty}(S)$. A $k$-form $\omega \in \Omega^k(S)$ is called \emph{$P$-harmonic} if,
\begin{align*}
d\omega &= 0,&
\delta P \omega &= 0.
\end{align*}
Accordingly, a vector field $X$ on $S$ is called \emph{$P$-harmonic} if its flat (metric dual 1-form) $X^{\flat}\in \Omega^1(S)$ is a $P$-harmonic 1-form. Our result concerning $P$-harmonic 1-forms on the 2-torus is the following.

\begin{theorem}\label{P-harmonic 1-forms}
Let $S$ be an oriented Riemannian 2-torus. Let $\mathcal{H}_P^1(S)$ denote the real vector space of $P$-harmonic 1-forms on $S$. Then, the following holds.
\begin{enumerate}
    \item $\dim \mathcal{H}_P^1(S) = 2$.
    \item The map assigning a $P$-harmonic 1-form to its de Rham cohomology class,
    \begin{equation*}
        \mathcal{H}_P^1(S) \ni \omega \mapsto [\omega] \in H^1_{\text{dR}}(S)
    \end{equation*}
    is a linear isomorphism.
    \item If $\omega \in \mathcal{H}_P^1(S)$, then $\omega$ is either identically zero or non-vanishing.
\end{enumerate}
\end{theorem}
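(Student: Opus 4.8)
The plan is to treat the three claims separately, deriving (1) from (2), and reserving the pseudo-analytic machinery for (3). Throughout I would use the two-dimensional reformulation of the defining conditions: since $\dim S = 2$, the identity $\delta = -\star d\star$ on $1$-forms shows that $\delta(P\omega) = 0$ is equivalent to $d\big(\star(P\omega)\big) = 0$, so $\omega$ is $P$-harmonic precisely when both $\omega$ and $\star(P\omega)$ are closed.

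For the cohomology map in (2), well-definedness and linearity are immediate because $P$-harmonic forms are closed. For injectivity, suppose $\omega = df$ is $P$-harmonic; then $\delta(P\,df) = 0$, and pairing against $f$ gives $\int_S P|df|^2\,dV = \int_S f\,\delta(P\,df)\,dV = 0$, whence $df = 0$ since $P > 0$, i.e. $\omega = 0$. For surjectivity I would represent a given class by a closed form $\alpha$ and solve for a function $f$ with $\delta\big(P(\alpha + df)\big) = 0$, i.e. the weighted-Laplace equation $Lf := \delta(P\,df) = -\delta(P\alpha)$. The operator $L$ is elliptic and self-adjoint on the closed surface $S$ with kernel the constants (by the same integration by parts), so by the Fredholm alternative the equation is solvable iff the right-hand side is $L^2$-orthogonal to constants, which holds because $\int_S \delta(P\alpha)\,dV = \int_S \langle P\alpha, d(1)\rangle\,dV = 0$. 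Then $\omega := \alpha + df$ is $P$-harmonic with $[\omega] = [\alpha]$. Claim (1) follows at once, since $\dim H^1_{\mathrm{dR}}(S) = 2$ for the $2$-torus.

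Claim (3) is the substantive part and is where Bers' theory enters. Fixing isothermal coordinates $(x,y)$ and writing $\omega = a\,dx + b\,dy$, the two closedness conditions read $b_x - a_y = 0$ and $(Pa)_x + (Pb)_y = 0$. Setting $w = a - ib$ and $z = x + iy$, these combine into a Carleman--Bers--Vekua equation $\partial_{\bar z}w = \mathcal{A}w + \mathcal{B}\bar w$ with smooth coefficients built from $d\ln P$. By Bers' similarity principle, any nonzero local solution factors as $w = e^{\phi}F$ with $\phi$ continuous and $F$ holomorphic; hence, using unique continuation and connectedness of $S$, either $w \equiv 0$ (so $\omega = 0$) or the zeros of $w$—which coincide with those of $\omega$—are isolated with the same winding behaviour as zeros of holomorphic functions.

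Finally I would globalise the local zero structure. The combination $\omega + i\star\omega = w\,dz$ is a pseudo-holomorphic section of the canonical bundle, so its zeros carry positive integer multiplicities; equivalently, the dual vector field $\omega^\sharp$ has isolated zeros whose indices share a common sign. By Poincar\'e--Hopf their sum equals $\chi(S) = 0$, which forces the zero set to be empty whenever $\omega \not\equiv 0$. I expect the main obstacle to be exactly this last step: confirming that the Bers--Vekua structure is coordinate-independent, so that $w$ patches into a genuine section of the canonical bundle, and that the similarity principle delivers zeros of a definite sign, so that the vanishing Euler characteristic of the torus can be brought to bear. The elliptic arguments in (1)--(2) are routine by comparison.
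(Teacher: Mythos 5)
Your proposal is correct, and it reaches the same conclusions by a route that differs from the paper's in both halves. For statements (1)--(2) the paper symmetrises by $\omega\mapsto P^{-1/2}\omega$ and identifies $\mathcal{H}_P^1(S)$ with the kernel of the Witten-deformed Laplacian $\Delta_h$, quoting the standard fact that $\dim\ker\Delta_h$ equals the Betti number; your weighted-Laplacian argument (injectivity by pairing $\delta(P\,df)=0$ against $f$, surjectivity by the Fredholm alternative for $f\mapsto\delta(P\,df)$) is the same Hodge-theoretic content made explicit, and is if anything more self-contained. For statement (3) the paper also passes through Bers, but at the global level: it packages $\tilde\omega+i\star\tilde\omega$ as a regular $(F,G)$-differential for the generating pair built from $\sqrt{P}$ and cites Bers' generalised Riemann--Roch theorem, which on a genus-one surface forces such a differential to be identically zero or non-vanishing. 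You instead use the local similarity principle $w=e^{\phi}F$ for the Carleman--Bers--Vekua equation and close the argument topologically via Poincar\'e--Hopf and $\chi(S)=0$. Both are valid; your version makes transparent exactly where the torus hypothesis enters (all zero indices of $\omega^{\sharp}$ share a sign, so their sum can only vanish if there are none), at the cost of having to verify the patching of $w\,dz$ into a section of the canonical bundle --- which, as you note, is automatic since $\omega+i\star\omega$ is a globally defined eigenform of $\star$ and hence of pure type. The two concerns you flag at the end (coordinate-invariance of the Bers--Vekua structure, definiteness of the sign of the indices) are genuine proof obligations but standard ones, and neither is an obstacle.
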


The cohomology class of a closed 1-form dual to a vector field is not particularly telling of the field-line topology on a oriented Riemannian 2-torus. This will be illustrated by an example in Section \ref{sec:exandapp}. In particular, the strictly dual result to Theorem \ref{P-harmonic 1-forms} does not address the field-line topology directly. Nevertheless, $P$-harmonic fields have a winding number, which is metric independent.

Instead of using fractions $a/b$ to define the winding number, we will use elements $[(a,b)]$ of the projective real line $\mathbb{P}(\mathbb{R}) = (\mathbb{R}^2\backslash \{0\})/{\sim}$ where vectors $u,v \in \mathbb{R}^2\backslash\{0\}$ are considered equivalent $u \sim v$ iff $u$ and $v$ are linearly dependent. This is to account for the case of $b = 0$ and to emphasise the connection with Diophantine vectors.

\begin{definition}\label{rotationaltransform}
Let $S$ be a 2-torus. Then, a densely-non-vanishing vector field $X$ is said be \emph{winding} if there exists a non-vanishing closed 1-form $\omega$ on $S$ such that $\omega(X) = 0$. Let $\gamma_1,\gamma_2 \in H_1(S)$ be generating classes of the first homology $H_1(S)$. Let $\omega$ be a closed 1-form with non-trivial cohomology class $[\omega] \neq 0$ such that $\omega(X) = 0$. Set
\begin{align*}
a &= -\int_{\gamma_2}\omega,& 
b &= \int_{\gamma_1}\omega.
\end{align*}
Then, $X$ is said to have \emph{Diophantine winding number} if the vector $(a,b)$ is Diophantine. The vector $(a,b) \in \mathbb{R}^2$ is non-zero and hence defines a class $[(a,b)] \in \mathbb{P(R)}$. The class $[(a,b)]$ is called the \emph{winding number of $X$ with respect to the generators $\gamma_1$ and $\gamma_2$}. 
\end{definition}

Here, a vector $u \in \mathbb{R}^2$ is said to be a \emph{Diophantine vector} if there exists $\gamma > 0$ and $\tau > 1$ such that, for all $k \in \mathbb{Z}^2\backslash \{0\}$, there holds $|\langle u,k \rangle| \geq \gamma \|k\|^{-\tau}$.

The proof of correctness of Definition \ref{rotationaltransform} is found in Appendix \ref{app:rottranscorrect} along with some compatibilities with other notions of winding number. Our result concerning $P$-harmonic vector fields on the 2-torus is the following.

\begin{theorem}\label{P-harmonic vector fields}
Let $S$ be an oriented Riemannian 2-torus. Let $0 < P \in C^{\infty}(S)$ and let $\mathcal{H}_P V(S)$ denote the real vector space of $P$-harmonic vector fields on $S$. Then, the following holds.
\begin{enumerate}
    \item $\dim \mathcal{H}_P V(S) = 2$.
    \item Let $X \in \mathcal{H}_P V(S)$. Then either $X$ is identically zero or $X$ is non-vanishing.
    \item If $X$ is non-vanishing, then $X$ is semi-linearisable with proportionality $\|X\|^2$. Moreover, $X$ is winding and if $X$ has Diophantine winding number, then $X$ is linearisable.
    \item Let $\gamma_1,\gamma_2$ be generating classes of the first homology of $S$. The winding number of $X$ with respect to $\gamma_1,\gamma_2$ is $[(a,b)]$ where,
    \begin{align*}
    a &= - \int_{\gamma_2}P\star X^{\flat},&
    b &= \int_{\gamma_1}P\star X^{\flat}.
    \end{align*}
    If $\gamma_1,\gamma_2$ are represented by closed curves, $C_i : [0,1] \to S$, $i \in \{1,2\}$, then
    \begin{equation*}
    \int_{\gamma_i}P\star X^{\flat} = \int_{0}^1 P(C_i(t)) \mu(X(C_i(t)),C_i'(t))dt,
    \end{equation*}
    where $\mu$ is the area element on $S$.
    \item For every $\tau \in \mathbb{P(R)}$, there exists a unique $Y \in \mathcal{H}_P V(S)$ (up to non-zero scalar multiplication) with winding number $\tau$ with respect to $\gamma_1,\gamma_2$.
\end{enumerate}
\end{theorem}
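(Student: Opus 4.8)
The plan is to transport everything through the musical isomorphism $X \mapsto X^{\flat}$, which identifies $\mathcal{H}_P V(S)$ isometrically with the space $\mathcal{H}^1_P(S)$ of $P$-harmonic $1$-forms. Parts (1) and (2) are then immediate from Theorem \ref{P-harmonic 1-forms}: the dimension count transfers verbatim, and a vector field vanishes (resp.\ is non-vanishing) exactly where its flat does, so the dichotomy in part (3) of Theorem \ref{P-harmonic 1-forms} gives part (2). The remaining work is to extract the dynamical content from the two closed $1$-forms naturally attached to a $P$-harmonic $X$. Writing $\mu$ for the area form and using $\star X^{\flat} = \iota_X \mu$ on the oriented surface $S$, the defining conditions $dX^{\flat} = 0$ and $\delta(P X^{\flat}) = 0$ become, since $\delta = -\star d\star$ on $1$-forms in dimension two, the statement that both $\omega_1 := X^{\flat}$ and $\omega_2 := P \star X^{\flat}$ are closed. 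I record the pointwise identities $\omega_1(X) = \|X\|^2$ and $\omega_2(X) = P\,\mu(X,X) = 0$.

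Assume now $X$ is non-vanishing. Then $\omega_1$ and $\omega_2$ are non-vanishing, hence each has non-trivial de Rham class (a non-vanishing exact $1$-form would have to vanish at a critical point of its potential on the compact $S$). Setting $Y := X/\|X\|^2$, the identities above read $\omega_1(Y) = 1$ and $\omega_2(Y) = 0$. First I would show $[\omega_1]$ and $[\omega_2]$ are linearly independent in $H^1_{\mathrm{dR}}(S)$: if $s\omega_1 + t\omega_2 = dh$, then evaluating on $Y$ gives $Y(h) = s$ everywhere, which forces $s = 0$ at a maximum of $h$, whereupon $t[\omega_2] = 0$ forces $t = 0$. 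Consequently $\omega_1,\omega_2$ are both pointwise independent (being $g$-orthogonal) and cohomologically independent, so integrating their pullbacks on the universal cover produces an Albanese map $\bar\Phi : S \to \mathbb{R}^2/\Lambda$ onto the quotient by the period lattice $\Lambda$; it is a local diffeomorphism because $\omega_1\wedge\omega_2\neq 0$ and a $\pi_1$-isomorphism by construction, hence a diffeomorphism. In the resulting angle coordinates $(F_1,F_2)$ one has $dF_i(Y)=\omega_i(Y)$, so $Y=\partial_{F_1}$ is constant, giving the semi-linearisability of $X$ with proportionality $\|X\|^2$ after the standard linear normalisation of $\mathbb{R}^2/\Lambda$. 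Since $\omega_2$ is a non-vanishing closed $1$-form with $\omega_2(X)=0$ and $[\omega_2]\neq 0$, $X$ is winding in the sense of Definition \ref{rotationaltransform}, and reading off $a=-\int_{\gamma_2}\omega_2$, $b=\int_{\gamma_1}\omega_2$ together with $(\star X^{\flat})(C_i')=\mu(X,C_i')$ yields the winding-number formulae of part (4).

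The hard part is the final clause of part (3): upgrading semi-linearisability to genuine linearisability of $X$ under a Diophantine hypothesis. The key structural observation is that $X$ preserves a smooth volume form, namely $\mathcal{L}_X(P\mu) = d\,\iota_X(P\mu) = d(P\star X^{\flat}) = 0$. Thus $X$ is a fixed-point-free, smooth, volume-preserving flow on $T^2$ whose orbits coincide with those of the linear field $Y$, so its rotation vector is the direction of the winding number. I would then invoke the classical linearisation theorem for such flows --- the measure-preserving, Diophantine refinement of Kolmogorov's theorem on torus flows, as developed in the cohomological-equation framework of \cite{SternbergCelestial,KocsardCohomological} --- to conclude that a Diophantine winding number makes $X$ itself smoothly conjugate to a constant field. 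This is where the genuine analytic input (small-divisor estimates) enters; everything preceding it is soft. Care will be needed to phrase $X$ as the smooth reparametrisation $\|X\|^2\,Y$ of the linearised $Y$ and to match the Diophantine condition on $(a,b)$ with the rotation vector of the volume-preserving flow.

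Finally, part (5) follows by examining the winding (period) map $W : \mathcal{H}_P V(S) \to \mathbb{R}^2$, $W(X) = \big(-\int_{\gamma_2} P\star X^{\flat},\ \int_{\gamma_1} P\star X^{\flat}\big)$, which is linear in $X$. It is injective: if $W(X)=0$ then $P\star X^{\flat}$ is closed with vanishing periods, hence exact, so it vanishes somewhere, forcing $X=0$ by part (2). Since $\dim\mathcal{H}_P V(S)=2$ by part (1), $W$ is a linear isomorphism onto $\mathbb{R}^2$. Each $\tau\in\mathbb{P}(\mathbb{R})$ is a line through the origin, whose $W$-preimage is a line in $\mathcal{H}_P V(S)$; by part (4) this line is exactly the set of $P$-harmonic fields with winding number $\tau$, unique up to non-zero scalar multiplication since the projective class is scale-invariant, which proves existence and uniqueness.
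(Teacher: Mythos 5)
Your proposal is correct, and its overall skeleton matches the paper's: parts (1) and (2) via the musical isomorphism onto $\mathcal{H}^1_P(S)$ and Theorem \ref{P-harmonic 1-forms}; parts (3) and (4) via the two closed $1$-forms $X^{\flat}$ and $P\star X^{\flat}$ with the Diophantine upgrade delegated to the celestial-mechanics result (Theorem \ref{celestialmechanics}, which the paper likewise cites rather than reproves at this point); and part (5) via injectivity of the period map plus the dimension count, which is the same argument the paper packages as a chain of isomorphisms through $\mathcal{H}^1_{1/P}(S)$ and de Rham's theorem. The one genuine divergence is the mechanism for semi-linearisability. The paper takes the frame dual to $(X^{\flat},\,P\star X^{\flat})$, namely $X/\|X\|^2$ and $Y=X^{\perp}/(P\|X\|^2)$, observes that closedness of both forms forces $[X/\|X\|^2,Y]=0$, and invokes the commuting-flows lemma (Proposition \ref{Arnoldcommute}: a transitive $\mathbb{R}^2$-action exhibits $S$ as $\mathbb{R}^2/\Lambda$). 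You instead integrate the two closed forms on the universal cover to build the period (Albanese) map $S\to\mathbb{R}^2/\Lambda$ directly, checking pointwise independence via $X^{\flat}\wedge P\star X^{\flat}=P\|X\|^2\mu$ and cohomological independence by your evaluation-at-a-critical-point argument, and then using the covering-space/$\pi_1$ argument to get a global diffeomorphism. The two routes are dual to one another and of comparable length; yours trades the homogeneous-space machinery for the proper-local-diffeomorphism-is-a-covering step, and has the small bonus of producing the coordinates in which $\omega_1,\omega_2$ are literally $dF_1,dF_2$, which makes part (4) immediate. Your observation that $\mathcal{L}_X(P\mu)=d(P\star X^{\flat})=0$ is a clean way to see that $X$ satisfies the hypotheses of Theorem \ref{celestialmechanics}; the paper gets the same conclusion from $\eta(X)=0$ with $\eta=P\star X^{\flat}$ non-vanishing.
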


In the next section, we will prove these results. It should be emphasised early on that the less trivial part of Theorem \ref{P-harmonic vector fields} is the non-vanishing behaviour of solutions and dimension of the solution space. The remaining properties follow directly from the convenient algebraic structure of the equations. If one is willing to make non-vanishing assumptions from the beginning, as known to celestial mechanics~\cite{SternbergCelestial} the linearisability properties in Theorem \ref{P-harmonic vector fields} hold for vector fields satisfying more general equations. We formulate the result here and prove it in Appendix \ref{app:rottranslinear} using an alternative covariant approach.

\begin{theorem}\label{celestialmechanics}
Let $X$ be a non-vanishing vector field on a 2-torus $S$. Then, the following are equivalent.
\begin{enumerate}
    \item $X$ preserves a top-form $\mu$ on $S$.
    \item $X$ is winding.
    \item $X$ is semi-linearisable.
\end{enumerate}
Moreover, if $X$ has Diophantine winding number, then $X$ is linearisable.
\end{theorem}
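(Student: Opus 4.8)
The plan is to establish the equivalences as a cycle, isolating the single genuinely dynamical step and dispatching the rest by Cartan calculus, using that every top-form on a surface is automatically closed. For $(1)\Rightarrow(2)$, given an invariant top-form $\mu$ I set $\omega=\iota_X\mu$ (interior product); then $\mathcal L_X\mu=d\iota_X\mu=0$ shows $\omega$ is closed, $\omega(X)=\mu(X,X)=0$, and non-degeneracy of $\mu$ together with $X\neq 0$ forces $\omega$ nowhere zero, hence non-exact, so $X$ is winding. For $(2)\Rightarrow(1)$, fix any area form $\mathrm{vol}$ and define $Y$ by $\iota_Y\mathrm{vol}=\omega$; then $Y$ is non-vanishing, lies in $\ker\omega$ alongside $X$, so $Y=gX$ for a non-vanishing $g$, and $\mathcal L_Y\mathrm{vol}=d\omega=0$. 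Taking $\mu=g\,\mathrm{vol}$ gives $\iota_X\mu=\omega$, whence $\mathcal L_X\mu=d\omega=0$. For $(3)\Rightarrow(1)$, if $\Phi_*(X/f)=a\,\partial_x+b\,\partial_y$ then $X/f$ preserves $\nu=\Phi^*(dx\wedge dy)$, and since $\iota_X(\nu/f)=\iota_{X/f}\nu$ is closed, $X$ preserves $\mu=\nu/f$. This settles $(1)\Leftrightarrow(2)$ and $(3)\Rightarrow(1)$.

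The substantive step is $(2)\Rightarrow(3)$, namely straightening the kernel foliation of a non-vanishing closed $1$-form $\omega$ on $\mathbb{T}^2$. I would work with the period homomorphism $\gamma\mapsto\int_\gamma\omega$ on $H_1(S)$ and split into two cases. If the two periods are rationally dependent, then after rescaling $\omega$ and an $SL(2,\mathbb{Z})$ change of homology basis they become $(1,0)$, so $\omega=d\theta$ for a submersion $\theta\colon\mathbb{T}^2\to\mathbb{R}/\mathbb{Z}$; its fibres are the closed leaves, Ehresmann's theorem makes $\theta$ a fibre bundle, and the fibres are straight in a bundle trivialisation. If the periods are rationally independent, I would choose a transversal circle $C$ and study the return map $h\colon C\to C$ of the flow: because $\omega$ is closed, the measure it induces on $C$ is holonomy-invariant, and the normalised primitive $\xi\mapsto\frac1T\int\omega|_C$ conjugates $h$ to the rigid rotation by the period ratio $\rho$. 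Transporting this conjugacy along the flow yields coordinates $(\xi,\eta)$ in which the leaves are lines of constant slope $\rho$; since $X$ is tangent to them, $X=f\,(\partial_\xi+\rho\,\partial_\eta)$ for a non-vanishing $f$, so $X/f$ is constant and $X$ is semi-linearisable.

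I expect the irrational case to be the main obstacle. For a generic flow the return map is merely a circle diffeomorphism, and Denjoy phenomena obstruct smooth conjugacy to a rotation; the point is that $\omega$ being a closed $1$-form supplies a smooth holonomy-invariant transverse measure, and a circle diffeomorphism with irrational rotation number preserving a smooth measure is smoothly conjugate to the rotation by integrating that measure. The delicate bookkeeping is checking that this conjugacy is globally and smoothly defined and that the transverse coordinate $\xi$ and the leafwise coordinate $\eta$ assemble into a genuine diffeomorphism of $\mathbb{T}^2$.

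For the final assertion, suppose the winding number $(a,b)$ is Diophantine; it is then irrational, so the construction above conjugates $X$ to $f\,X_0$ with $X_0=\partial_\xi+\rho\,\partial_\eta$ a constant Diophantine field, and invariance of the pushed-forward volume $\rho_0\,dx\wedge dy$ forces $X_0(f\rho_0)=0$, so $f\rho_0$ is constant by density of orbits. Promoting semi-linearisability to linearisability is then a time-change of $f X_0$ to a constant field, which reduces to solving the cohomological equation $X_0(\psi)=c_0-1/f$ for a smooth $\psi$, with $c_0$ the mean of $1/f$ making the right-hand side of zero average. This is the classical small-divisor problem over a Diophantine rotation, solvable in $C^\infty$ by Fourier series with the Diophantine estimate controlling the denominators, which produces the conjugacy to a constant field and hence the linearisability of $X$.
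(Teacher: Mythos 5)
Your proposal is correct in outline, but for the one genuinely dynamical implication, $(2)\Rightarrow(3)$, you take a substantially different route from the paper. The paper invokes Calabi's theorem on intrinsically harmonic forms to choose a metric making the winding form $\omega$ harmonic; the closed coframe $(\omega,\star\omega)$ then produces a vector field $Y$ with $[X/\eta(X),Y]=0$, and a single Lie-group-action argument (a transitive $\mathbb{R}^2$-action on the torus) linearises $X/\eta(X)$ in one stroke, with no case distinction on the periods of $\omega$. You instead run the classical suspension argument: a closed transversal, the first-return map, and the observation that the closed form $\omega$ supplies a smooth positive holonomy-invariant transverse measure whose integral conjugates the return map to a rigid rotation, bypassing Denjoy obstructions. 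This is a valid and genuinely different proof; what it costs is the rational/irrational case split and the global bookkeeping you yourself flag (existence of the closed transversal, well-definedness of the return map on every orbit, and assembling $(\xi,\eta)$ into a diffeomorphism of $\mathbb{T}^2$) --- exactly the steps the commuting-frame argument packages away. Your direct proofs of $(1)\Leftrightarrow(2)$ and $(3)\Rightarrow(1)$ are clean (the paper closes the cycle through $(2)\Rightarrow(3)\Rightarrow(1)$ instead of proving $(2)\Rightarrow(1)$ directly, but your $\iota_Y\mathrm{vol}=\omega$, $Y=gX$ argument is fine). For the Diophantine conclusion both proofs reduce to the same small-divisor cohomological equation over the constant Diophantine field; you phrase it as a time-change $X_0(\psi)=c_0-1/f$, the paper as constructing a commuting field $Y_u=uX+Y$ via $\tilde{X}(u)=Y(-1/f)$ and reapplying the commuting-frame proposition --- the analytic content (zero mean plus Diophantine Fourier estimates) is identical, though in your version you still owe the reader the verification that the modified closed $1$-form (or reparametrised flow) assembles into an actual torus diffeomorphism; your intermediate observation that $f\rho_0$ is constant is true but not needed.
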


\section{Proof of the main results}\label{sec:proofofres}

Theorem \ref{mainresult} is proven in Section \ref{subsec:P-harmonic vect} via Theorems \ref{P-harmonic 1-forms} and \ref{P-harmonic vector fields} and some basic computations. The proof of Theorem \ref{P-harmonic 1-forms} relies on Witten-deformed cohomology theory as well as Bers' pseudo-analytic function theory. We will introduce these theories in the course of the proof. The corresponding result for constant $P$ has a simplified proof which only relies on classical theory. We will discuss this in relation to the full result. After this, Theorem \ref{P-harmonic vector fields} follows from Theorem \ref{P-harmonic 1-forms} using the properties of the winding numbers and linearisability established in Appendix \ref{app:rottrans}. We will first present the basic computations required for the proof of Theorem \ref{mainresult}.

\subsection{Computations with codimension 1 submanifolds}

The computations we do here will be of higher generality than needed for Theorem \ref{mainresult} since no additional difficulty is met. For the following, let $M$ be an oriented Riemannian manifold with boundary, with metric $g$ and top form $\mu$. Let $S$ be a codimension $1$ oriented Riemannian submanifold with boundary. Let $n$ be the outward unit vector field on $S$. Write $\imath : S \subset M$ for the inclusion and $\mu_S$ for the inherited area form on $S$ from $M$.

\begin{proposition}\label{d is normcurl}
Assume $\dim M = 3$. Let $B$ be a vector field on $M$. Set $b = B^{\flat}$ and $\omega = \imath^*b$. Then
\begin{equation*}
d\omega = g|_S(\nabla \times B|_{S},n)\mu_S.    
\end{equation*}
\end{proposition}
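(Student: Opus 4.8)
The plan is to render the curl operation in the language of differential forms and then pull everything back to $S$. Recall that for any vector field $X$ on an oriented Riemannian manifold one has the identity $\iota_X \mu = \star X^\flat$ (interior product against the volume form equals the Hodge dual of the flat), and that in dimension three the curl is characterised by $(\nabla \times B)^\flat = \star db$. Combining these with the involutivity $\star\star = \mathrm{id}$ on $2$-forms in dimension three, I obtain the clean identity
\[
db = \iota_{\nabla \times B}\,\mu .
\]
Since pullback commutes with the exterior derivative, $d\omega = d\,\imath^* b = \imath^* db = \imath^*\bigl(\iota_{\nabla \times B}\mu\bigr)$, so the whole task reduces to computing the pullback of the $2$-form $\iota_{\nabla \times B}\mu$ along the inclusion $\imath$.

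First I would split the curl along $S$ into its normal and tangential components, writing $\nabla \times B|_S = g|_S(\nabla \times B|_S, n)\, n + T$, where $T$ is tangent to $S$. By linearity of the interior product in its vector slot this decomposes $\imath^*\bigl(\iota_{\nabla \times B}\mu\bigr)$ into a tangential piece and a normal piece, which I treat separately.

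The tangential piece vanishes identically: for $T$ tangent to $S$ and any $v_1,v_2 \in T_pS$, one has $\imath^*(\iota_T \mu)(v_1,v_2) = \mu(T, d\imath\, v_1, d\imath\, v_2)$, which is the evaluation of a top form of degree $3$ on three vectors all lying in the $2$-dimensional subspace $T_pS \subset T_pM$; these are necessarily linearly dependent, so the value is zero by antisymmetry. Hence $\imath^*(\iota_T \mu) = 0$. For the normal piece, the scalar $g|_S(\nabla \times B|_S, n)$ pulls out of the pullback, leaving $\imath^*(\iota_n \mu)$, which is exactly the inherited area form $\mu_S$ determined by the outward unit normal $n$; indeed this is precisely how the induced orientation and area form on $S$ are defined. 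Assembling the two computations yields $d\omega = g|_S(\nabla \times B|_S, n)\,\mu_S$, as claimed.

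The only genuine obstacle is the bookkeeping of conventions rather than any substantive difficulty: one must verify that the chosen sign convention for $\nabla \times B$ and the $\star\star$ sign in dimension three conspire to give $db = \iota_{\nabla \times B}\mu$ with no stray factor, and that the orientation on $S$ is the standard one for which $\mu_S = \imath^*(\iota_n \mu)$ with $n$ outward. Once these are pinned down, every remaining step is purely formal.
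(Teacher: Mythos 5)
Your proof is correct, and it reaches the conclusion by a slightly different route than the paper. The paper computes $\star_S\,\imath^* db$ directly, invoking its Appendix~A lemma relating the Hodge stars on $M$ and $S$, namely $\star_S \imath^*\alpha = \imath^*(i_n \star\alpha)$ for a $2$-form $\alpha$, together with $\star db = (\nabla\times B)^{\flat}$, so that the scalar $g|_S(n,\nabla\times B|_S)$ appears as the $\star_S$-dual of $d\omega$. You instead rewrite $db = \iota_{\nabla\times B}\,\mu$ (which is the same curl identity composed with $\iota_X\mu = \star X^{\flat}$ and $\star\star=\mathrm{id}$ on $2$-forms in dimension $3$) and then compute $\imath^*(\iota_X\mu)$ by splitting $X=\nabla\times B|_S$ into normal and tangential parts: the tangential part dies because $\mu$ is evaluated on three vectors in a $2$-dimensional subspace, and the normal part gives $g|_S(\nabla\times B|_S,n)\,\imath^*(\iota_n\mu) = g|_S(\nabla\times B|_S,n)\,\mu_S$. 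This decomposition is precisely the identity $\imath^*(i_Y\mu) = g(Y,N)|_S\,\mu_S$ that the paper proves and uses in the very next proposition (for the codifferential), so your argument is self-contained and avoids the appendix Hodge-star lemma, at the cost of the sign bookkeeping for $db=\iota_{\nabla\times B}\mu$ and $\mu_S=\imath^*(\iota_n\mu)$, which you correctly flag and which does check out with the paper's conventions.
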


\begin{proof}
We have $d\omega = \imath^*db$. Relating the Hodge stars $\star$ on $M$ and $\star_S$ on $S$ (see Proposition \ref{hodgepullback} in Appendix \ref{app:normcorrect}), we get,
\begin{equation*}
\star_S \imath^*db = \imath^*(i_n \star db|_S) = \imath^*(i_{n} (\nabla \times B|_S)^{\sharp}) = \imath^*(g|_S(n,\nabla \times B|_S)) = g|_S(n,\nabla \times B|_S).
\end{equation*}
\end{proof}

\begin{proposition}
Let $N$ be a vector field on $M$ such that $N|_S = n$. Let $B$ be a vector field on $M$. Set $b = B^{\flat}$ and $\omega = \imath^*b$. Let $\delta_S$ be the codifferential on $S$. Then
\begin{equation*}
\delta_S \omega = \nabla \cdot B -g(B,N)\nabla \cdot N - g([N,B],N)~|_S.    
\end{equation*}
\end{proposition}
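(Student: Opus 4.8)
The plan is to follow the same route as the proof of Proposition~\ref{d is normcurl}: rewrite $\star_S\omega$ using the Hodge-star pullback relation, differentiate on $M$ via Cartan's magic formula, and restrict. The hard part here is not conceptual but bookkeeping — getting the three terms to assemble with exactly the signs stated — and this is handled automatically once one works through the established Hodge pullback identity rather than with an ad hoc adapted orthonormal frame.

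First I would compute $\star_S\omega$. Since $\omega = \imath^* b$ with $b = B^{\flat}$, the Hodge pullback relation (Proposition~\ref{hodgepullback}) gives $\star_S\omega = \star_S\imath^* b = \imath^*(i_N\star b)$, where $N$ is the chosen extension with $N|_S = n$. Using the standard identity $\star b = \star B^{\flat} = i_B\mu$, this becomes
\begin{equation*}
\star_S\omega = \imath^*\big(i_N\, i_B\,\mu\big).
\end{equation*}
Because $d$ commutes with pullback, $d\star_S\omega = \imath^*\big(d\, i_N i_B\mu\big)$, so everything reduces to computing the ambient exact form $d\, i_N i_B\mu$.

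Next I would expand $d\, i_N i_B\mu$ with Cartan's magic formula and the derivation property of the Lie derivative. Writing $d\, i_N(i_B\mu) = \mathcal{L}_N(i_B\mu) - i_N\, d(i_B\mu)$ and using $d(i_B\mu) = \mathcal{L}_B\mu = (\nabla\cdot B)\mu$ together with $\mathcal{L}_N(i_B\mu) = i_{[N,B]}\mu + i_B\mathcal{L}_N\mu = i_{[N,B]}\mu + (\nabla\cdot N)\,i_B\mu$, I obtain
\begin{equation*}
d\, i_N i_B\mu = i_{[N,B]}\mu + (\nabla\cdot N)\,i_B\mu - (\nabla\cdot B)\,i_N\mu .
\end{equation*}

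Finally I would pull back and apply $\star_S$. The one elementary ingredient is the identity that, for any vector field $Z$ on $M$, $\imath^*(i_Z\mu) = g(Z,n)\,\mu_S$, which holds because the tangential part of $Z$ contributes a degenerate determinant and only the normal component survives; in particular $\imath^*(i_N\mu) = \mu_S$. Applying this to each of the three terms gives $d\star_S\omega = \big[\,g([N,B],n) + (\nabla\cdot N)\,g(B,n) - \nabla\cdot B\,\big]\mu_S$ on $S$. Since $\star_S$ sends $\phi\,\mu_S$ to the function $\phi$, and $\delta_S = (-1)^{2\dim S + 1}\star_S d\star_S = -\star_S d\star_S$ on $1$-forms, negating yields $\delta_S\omega = \nabla\cdot B - g(B,N)\,\nabla\cdot N - g([N,B],N)$ restricted to $S$, as claimed. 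I expect the only delicate point to be verifying the pullback identity $\imath^*(i_Z\mu) = g(Z,n)\mu_S$ and confirming its orientation sign is consistent with the convention $\mu_S = \imath^*(i_n\mu)$ baked into Proposition~\ref{hodgepullback}; everything else is a direct calculation.
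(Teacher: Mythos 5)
Your argument is sound and reaches the stated formula, but it is organised differently from the paper's proof. The paper starts from $\delta_S\omega = \star_S\mathcal{L}_{\omega^{\sharp}}\mu_S$, identifies $\omega^{\sharp}$ as ($\imath$-related to) the tangential projection $B-g(B,N)N$, and expands $\mathcal{L}_{B-g(B,N)N}\,i_N\mu$ using naturality of the Lie derivative under $\imath$-relatedness; you instead convert $\star_S\omega$ into the ambient form $\imath^*(i_N i_B\mu)$ via Proposition \ref{hodgepullback} and apply Cartan's formula to $d\,i_N(i_B\mu)$, contracting with $N$ rather than projecting $B$. Your version buys a slightly leaner computation (no tangential decomposition, no $\imath$-relatedness lemma for $\mathcal{L}$) at the cost of leaning on the Hodge-pullback lemma in degree $k=1$; both arguments then funnel through the same identity $\imath^*(i_Z\mu)=g(Z,n)\mu_S$ and produce the same three terms. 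The ``delicate point'' you flag at the end is genuinely the crux, and you should not wave it off: as printed, Proposition \ref{hodgepullback} carries the factor $(-1)^{k(n-k)}$, which is $+1$ for $k=1$, $n=3$, whereas the antiderivation identity invoked in its own proof contributes $(-1)^{\deg\tilde{\eta}}=(-1)^k=-1$, and a direct check in an adapted oriented orthonormal frame $(n,e_2,e_3)$ with $B=e_2$ gives $\star_S\imath^*b=-\imath^*(i_n i_B\mu)$. Carrying that minus sign through your computation flips the overall sign of $\delta_S\omega$, so your final formula matches the proposition only if you use the $+1$ as printed. The paper's own proof has the mirror-image issue (it writes $\delta_S\omega=+\star_S\mathcal{L}_{\omega^{\sharp}}\mu_S$, whereas the convention $\delta=(-1)^{nk+n+1}\star d\star$ of Section \ref{sec:mainresdef} would put a minus sign there, as the test case $\omega=x\,dx$ on the flat plane shows). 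So your route and the paper's agree with each other and with the stated result under the same pair of sign choices; just be explicit about which convention for $\star_S$ on pulled-back $1$-forms you are adopting, since that single sign governs the overall sign of the conclusion.
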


\begin{proof}
We may write
\begin{equation*}
\delta_S \omega = \star_S \mathcal{L}_{\omega^{\sharp}}\mu_S.   
\end{equation*}
We also have the $\imath$-relatedness,
\begin{gather*}
T\imath \circ \omega^{\sharp} = (B-g(B,N)N) \circ \imath\\
\mu_S = \imath^*(i_N\mu).
\end{gather*}
Hence,
\begin{equation*}
\mathcal{L}_{\omega^{\sharp}}\mu_S = \imath^* \mathcal{L}_{B-g(B,N)N}i_N\mu.
\end{equation*}
With this,
\begin{equation*}
\begin{split}
\mathcal{L}_{B-g(B,N)N}i_N\mu &= \mathcal{L}_{B}i_N\mu - \mathcal{L}_{g(B,N)N}i_N\mu\\
&= ([\mathcal{L}_{B},i_N]+i_N\mathcal{L}_{B})\mu -(i_{g(B,N)N}di_X\mu + di_{g(B,N)N}i_N\mu)\\
&= (i_{[B,N]}+i_N\mathcal{L}_{B})\mu -g(B,N)i_{N}di_N\mu\\
&= i_{[B,N]}\mu+i_N\mathcal{L}_{B}\mu -g(B,N)i_{N}(i_Nd\mu + di_N\mu)\\
&= i_{[B,N]}\mu+i_N\mathcal{L}_{B}\mu -g(B,N)i_{N}\mathcal{L}_X\mu\\
&= i_{[B,N]}\mu+i_N(\nabla \cdot B)\mu -g(B,N)i_{N}(\nabla \cdot N)\mu\\
&= i_{[B,N]}\mu+(\nabla \cdot B -g(B,N)\nabla \cdot N)i_{N}\mu.
\end{split}
\end{equation*}
Now, for any vector field $Y$ on $M$, since $Y-g(X, Y) X$ is $\iota$-related to a vector field $\tilde{Y}$ on $S$ and $\imath^*\mu \in \Omega^n(S) = \{0\}$, we get, 
\begin{equation*}
\begin{split}
\imath^*(i_Y\mu) &= \imath^*(i_{Y-g(Y,N)N +g(Y,N)N}\mu)\\
&= \imath^*(i_{Y-g(Y, N)N}\mu) + g(Y,N)|_S\imath^*(i_N\mu)\\
&= i_{\tilde{Y}}\imath^*\mu + g(Y, N)|_S\mu_S\\
&= g(Y, N)|_S\mu_S. 
\end{split}
\end{equation*}
In particular, $\imath^*i_{[B,X]}\mu = g([B,X],X)|_S\mu_S$. Thus,
\begin{equation*}
\begin{split}
\mathcal{L}_{\omega^{\sharp}}\mu_S &= \imath^* \mathcal{L}_{B-g(B,N)N}i_N\mu\\
&= \imath^*(i_{[B,N]}\mu+(\nabla \cdot B -g(B,N)\nabla \cdot N)i_{N}\mu)\\
&= g([B,N],N)|_S\mu_S+(\nabla \cdot B -g(B,N)\nabla \cdot N)|_S \mu_S\\
&= (g([B,N],N)+\nabla \cdot B -g(B,N)\nabla \cdot N)|_S \mu_S.
\end{split}
\end{equation*}
Hence, we arrive at
\begin{equation*}
\begin{split}
\delta_S \omega &= \star_S \mathcal{L}_{\omega^{\sharp}}\mu_S\\
&= g([B,N],N)+\nabla \cdot B -g(B,N)\nabla \cdot N~|_S\\
&= \nabla \cdot B -g(B,N)\nabla \cdot N - g([N,B],N)~|_S.
\end{split}
\end{equation*}
\end{proof}

If $B$ is tangent to $S$, the final term $g([N,B],N)|_S$ is the normal surface derivative along $S$ in Definition \ref{normalderivative}. Moreover, since the operations are local, we may drop the assumption that $S$ has a vector field $N$ on $M$ with $N|_s = n$ since this is always true locally (see Appendix \ref{app:normcorrect}). Thus, in the tangential case, we can rewrite our formula intrinsically in terms of $B$ and $S$ a follows.

\begin{corollary}\label{cohomology is area pres}
Assume that $B$ is tangent to $S$. Set $b = B^{\flat}$ and $\omega = \imath^*b$. Let $\imath^*B$ denote the vector field on $S$ $\imath$-related to $B$. Then we get,
\begin{equation*}
\delta_S \omega = \nabla \cdot B|_S - \partial_n B.    
\end{equation*}
Moreover, if $\nabla \cdot B|_S = 0$, then if $P,u \in C^{\infty}(S)$ are with $P = e^{-u}$, then,
\begin{equation*}
\delta_S P \omega = 0 \Leftrightarrow du(\imath^*B) = \partial_n B.
\end{equation*}
\end{corollary}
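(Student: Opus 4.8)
The plan is to read off the first identity directly from the preceding proposition and then to establish the equivalence by a short Hodge-theoretic computation on the surface.

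First I would specialise the formula $\delta_S \omega = \nabla \cdot B - g(B,N)\nabla \cdot N - g([N,B],N)\,|_S$ from the previous proposition to the tangential case. Since $B$ is tangent to $S$, its normal component vanishes, $g(B,N)|_S = 0$, which annihilates the middle term. The surviving term $-g([N,B],N)|_S$ is exactly $-\partial_n B$ by Definition \ref{normalderivative} with $\mathcal{V} = n$, yielding $\delta_S \omega = \nabla \cdot B|_S - \partial_n B$. Here I would also invoke the locality remark preceding the statement to justify dropping the hypothesis that $n$ extends to a global $N$ on $M$, since such an extension always exists locally and $\delta_S$ is a local operator.

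For the second part I would compute $\delta_S(P\omega)$ by a Leibniz rule. On the oriented Riemannian $2$-torus $S$ the codifferential on $1$-forms is $\delta_S = -\star_S d\star_S$, and since $P$ is a function, $\star_S(P\omega) = P\star_S\omega$, so $d(P\star_S\omega) = dP\wedge\star_S\omega + P\,d\star_S\omega$. Applying $-\star_S$ and using the contraction identity $\star_S(\alpha\wedge\star_S\beta) = \langle\alpha,\beta\rangle$ for $1$-forms, I expect to obtain
\begin{equation*}
\delta_S(P\omega) = P\,\delta_S\omega - \langle dP,\omega\rangle .
\end{equation*}
The decisive identification is $\langle dP,\omega\rangle = dP(\omega^{\sharp}) = dP(\imath^*B)$, which uses the $\imath$-relatedness $T\imath\circ\omega^{\sharp} = (B - g(B,N)N)\circ\imath = B\circ\imath$ established in the previous proposition, again because $B$ is tangent to $S$. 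Substituting $P = e^{-u}$, so that $dP = -P\,du$, converts this into $\delta_S(P\omega) = P\bigl(\delta_S\omega + du(\imath^*B)\bigr)$. With $\nabla\cdot B|_S = 0$ the bracketed $\delta_S\omega$ becomes $-\partial_n B$, so $\delta_S(P\omega) = P\bigl(du(\imath^*B) - \partial_n B\bigr)$, and since $P = e^{-u} > 0$ is nowhere zero, the vanishing of $\delta_S(P\omega)$ is equivalent to $du(\imath^*B) = \partial_n B$.

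This computation is essentially routine, and there is no serious obstacle: all the substantive input — the formula for $\delta_S\omega$ and the identification of $\omega^{\sharp}$ with the $\imath$-related field $\imath^*B$ — is inherited from the preceding proposition. The only points genuinely requiring care are the bookkeeping of signs (the sign of $\delta_S$ on $1$-forms in dimension two, coming from $(-1)^{nk+n+1}$ with $n=2$, $k=1$, and the sign introduced by $dP = -P\,du$) and the correct normalisation of the Hodge-star contraction identity, both of which I would verify explicitly to make sure the final proportionality factor $P$ is positive so that the equivalence is genuinely an ``if and only if''.
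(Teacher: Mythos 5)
Your proposal is correct and follows essentially the same route as the paper, which offers no separate proof of this corollary: the first identity is read off from the preceding proposition together with the locality remark, and the equivalence is exactly your one-line Leibniz computation $\delta_S(P\omega) = P\,\delta_S\omega - \langle dP,\omega\rangle$ combined with $T\imath\circ\omega^{\sharp_S} = B\circ\imath$ and $dP = -P\,du$. The sign bookkeeping you flag is indeed the only delicate point, but the wrinkle lies in the paper rather than in your argument: the preceding proposition evaluates $\delta_S$ as $+\star_S\mathcal{L}_{\omega^\sharp}\mu_S$ whereas the stated convention $(-1)^{nk+n+1}\star d\star$ gives $-\star_S d\star_S$ on $1$-forms, and this overall sign only decides whether the proportionality should read $P=e^{-u}$ or $P=e^{u}$ (the paper itself switches to $P=e^{u}$ in the proof of Theorem \ref{mainresult}); the equivalence is unaffected since $P>0$ in either case.
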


\subsection{$P$-harmonic 1-forms and vector fields on an oriented Riemannian 2-torus}\label{subsec:P-harmonic}

Here, we prove theorems \ref{P-harmonic 1-forms} and \ref{P-harmonic vector fields}. Before specialising to tori, we will consider a general closed oriented Riemannian manifold $S$. Let $0 < P \in C^{\infty}(S)$. Consider the space of $P$-harmonic $k$-forms,
\begin{equation*}
\mathcal{H}^k_P(S) = \{\omega \in \Omega^k(S) : d\omega = 0,~\delta P \omega = 0\}.
\end{equation*}
It is natural to ``symmetrise" these equations by the linear automorphism on $\Omega^1(S)$ given by multiplication $\omega \mapsto P^{-1/2} \omega$, giving the isomorphism,
\begin{equation*}
\mathcal{H}^1_P(S) \cong \{\omega \in \Omega^1(S) : d_{p} \omega = 0,~\delta_{p} \omega = 0\},
\end{equation*}
where $p = \ln \sqrt{P}$ and, following Witten~\cite{witten1982supersymmetry} in his 1982 paper on supersymmetry and Morse theory, for $h \in C^{\infty}(S)$, we set
\begin{align*}
d_h  = e^{h}de^{-h} : \Omega^k(S) \to \Omega^{k+1}(S)\\
\delta_h  = e^{-h}de^{h} : \Omega^k(S) \to \Omega^{k-1}(S).
\end{align*}

Although in~\cite{witten1982supersymmetry}, the case of primary interest is $h$ being a non-degenerate Morse function, it is observed for general $h$ that the operators $d_h$, $\delta_h$ are adjoints of one another with respect to the $L^2$ inner product on $\Omega^k(M)$, $d_h^2 = 0$, $\delta_h^2 = 0$ and the relation $d_h e^{-h} = e^h d$ gives an isomorphism of the $k^{\text{th}}$ de Rham cohomology group and the cohomology group
\begin{equation*}
H^k_{\text{dR}}(S) \cong H^k_{\text{W-dR}}(S,h) = \ker(d_h : \Omega^k(S) \to \Omega^{k+1}(S))/\text{im}(d_h : \Omega^{k-1}(S) \to \Omega^{k}(S)).
\end{equation*}
The cohomology group $H^k_{\text{W-dR}}(S,h)$ is now known in the literature as a Witten deformation of the cohomology group $H^k_{\text{dR}}(S)$. In addition, Witten~\cite{witten1982supersymmetry} notes that by standard arguments, the kernel of the associated Laplace operator,
\begin{equation*}
\Delta_h = d_h\delta_h + \delta_h d_h : \Omega^k(S) \to \Omega^k(S)
\end{equation*}
has the same dimension as $H^k_{\text{dR}}(S)$, that is, the $k^{\text{th}}$ Betti number $B^k$ of $S$. Standard arguments include those present in the proof of the standard Hodge decomposition Theorem for the de Rham complex. These arguments may be adapted to a very large class of complexes known as elliptic complexes~\cite{wells1980differential} giving rise to a generalised Hodge decomposition theorem. Although, in our current position, we may already establish the following.

\begin{proposition}\label{naturalclassiso}
The assignment $\omega \mapsto [\omega]$ of a closed $k$-form on $S$ to its de Rham cohomology class gives an isomorphism $\mathcal{H}^k_P(S) \cong H^k_{\text{dR}}(S)$.
\end{proposition}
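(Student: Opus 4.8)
The plan is to show that the assignment $\omega \mapsto [\omega]$ is well-defined, linear, injective, and between spaces of equal finite dimension, from which it follows that it is an isomorphism. Well-definedness and linearity are immediate: every $\omega \in \mathcal{H}^k_P(S)$ is closed by the definition of $\mathcal{H}^k_P(S)$, so its de Rham class $[\omega] \in H^k_{\text{dR}}(S)$ is meaningful, and $H^k_{\text{dR}}(S)$ is finite-dimensional since $S$ is closed. It therefore remains to establish injectivity and to compute $\dim \mathcal{H}^k_P(S)$.

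For injectivity I would argue directly with the weighted $L^2$ pairing. Suppose $\omega \in \mathcal{H}^k_P(S)$ has $[\omega] = 0$, so $\omega = d\alpha$ for some $\alpha \in \Omega^{k-1}(S)$. Using that $S$ is closed (so integration by parts produces no boundary terms) and that $\delta$ is the $L^2$-adjoint of $d$,
\begin{equation*}
\langle P\omega,\omega\rangle = \langle P\omega, d\alpha\rangle = \langle \delta(P\omega), \alpha\rangle = 0,
\end{equation*}
since $\delta P\omega = 0$. On the other hand $\langle P\omega,\omega\rangle = \int_S P\,|\omega|^2\,\mathrm{vol}_S \geq 0$, with equality forcing $\omega = 0$ because $P > 0$ pointwise. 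Hence the kernel of $\omega \mapsto [\omega]$ is trivial.

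For the dimension count I would pass through the symmetrisation already introduced. With $p = \ln\sqrt{P}$, multiplication by $P^{1/2}$ (the inverse of the automorphism recorded in the text) carries $\mathcal{H}^k_P(S)$ isomorphically onto $\{\eta \in \Omega^k(S) : d_p\eta = 0,\ \delta_p\eta = 0\}$; this holds for every $k$ by the same computation as for $k=1$, namely $d_p(P^{1/2}\omega) = P^{1/2}\,d\omega$ and $\delta_p(P^{1/2}\omega) = P^{-1/2}\,\delta(P\omega)$. Since $d_p$ and $\delta_p$ are mutual $L^2$-adjoints, $\langle \Delta_p\eta,\eta\rangle = \|d_p\eta\|^2 + \|\delta_p\eta\|^2$, so this space is exactly $\ker \Delta_p$. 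The Witten--Hodge theory discussed above then gives $\dim \ker \Delta_p = B^k = \dim H^k_{\text{dR}}(S)$. Combining, $\dim \mathcal{H}^k_P(S) = \dim H^k_{\text{dR}}(S)$, and an injective linear map between vector spaces of equal finite dimension is an isomorphism.

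The genuinely analytic input, and hence the main obstacle, is the equality $\dim \ker \Delta_p = B^k$, which rests on the generalised Hodge decomposition for the Witten-deformed complex $(\Omega^\bullet(S), d_p)$, whose symbol agrees with that of the de Rham complex. This is the content of the ``standard arguments'' deferred to elliptic complex theory in the preceding discussion; once granted, the remainder is the elementary injectivity estimate together with a dimension comparison. An equivalent but more laborious route to surjectivity would take a de Rham class, represent it by a $\Delta_p$-harmonic $\eta$, and set $\omega = P^{-1/2}\eta$, checking $\omega \in \mathcal{H}^k_P(S)$ and $[\omega] = [\eta]$ via the intertwining $d_p e^{-p} = e^p d$; the dimension-count argument packages this more economically.
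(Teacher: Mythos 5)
Your proof is correct and follows essentially the same route as the paper: injectivity from the adjointness of $d$ and $\delta$ (your weighted pairing $\langle P\omega, d\alpha\rangle = \langle \delta(P\omega),\alpha\rangle$ is exactly the paper's argument transported back through the conjugation $\omega \mapsto P^{1/2}\omega$), the dimension count $\dim\ker\Delta_p = B^k$ from the Witten-deformed Hodge theory, and the conclusion by comparing dimensions. The only cosmetic difference is that the paper carries out the injectivity step inside the deformed complex and then undoes the conjugation, whereas you work directly in the undeformed picture.
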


\begin{proof}
We first consider the Witten deformation to the cohomology with some $h\in C^{\infty}(S)$. From the adjointness of $d_h$ and $\delta_h$, we have at once that every $d_h$-exact $k$-form $\omega$ which is $\delta_h$-co-closed, is necessarily $\omega = 0$. Hence, the restricted linear quotient map
\begin{equation*}
\ker d_h \cap \ker \delta_h \ni \omega \mapsto [\omega]_{\text{W}} \in H^k_{\text{W-dR}}(S,h),
\end{equation*}
is an injection. Again by adjointness, $\ker d_h \cap \ker \delta_h = \ker \Delta_h$. Hence, $\dim \ker d_h \cap \ker \delta_h = B^k = \dim H^k_{\text{W-dR}}(S,h)$. So that, by the rank-nullity theorem, this linear map is an isomorphism. Setting $h = p$ and recalling the isomorphism $\omega \mapsto P^{-1/2}\omega$, the result follows.
\end{proof}

We will now draw our attention to tori. We record Proposition \ref{naturalclassiso} in this instance.

\begin{corollary}\label{1st 2 of P-harmonic}
Let $S$ be an oriented Riemannian 2-torus. Then the assignment $\omega \mapsto [\omega]$ of a closed 1-form on $S$ to its de Rham cohomology class gives an isomorphism $\mathcal{H}^1_P(S) \cong H^1_{\text{dR}}(S)$. In particular, $\dim \mathcal{H}^1_P(S) = 2$.
\end{corollary}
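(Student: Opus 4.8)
The plan is to obtain this statement as an immediate specialisation of Proposition \ref{naturalclassiso}, which has already been established for an arbitrary closed oriented Riemannian manifold $S$ and all degrees $k$. Taking $k = 1$ and $S$ to be the oriented Riemannian 2-torus, that proposition directly yields that the assignment $\omega \mapsto [\omega]$ is an isomorphism $\mathcal{H}^1_P(S) \cong H^1_{\text{dR}}(S)$. Thus the only content remaining for the corollary is the ``in particular'' clause, namely the computation of $\dim H^1_{\text{dR}}(S)$.

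For the dimension count I would invoke the standard computation of the de Rham cohomology of the 2-torus. Writing $S \cong \mathbb{S}^1 \times \mathbb{S}^1$, one has $B^1 = \dim H^1_{\text{dR}}(S) = 2$; concretely, in angular coordinates the two closed 1-forms $d\theta_1, d\theta_2$ are not exact and their classes form a basis of $H^1_{\text{dR}}(S) \cong \mathbb{R}^2$ (alternatively this follows from the K\"unneth formula applied to the product, or from the first Betti number of the torus). Composing with the isomorphism of Proposition \ref{naturalclassiso} then gives $\dim \mathcal{H}^1_P(S) = \dim H^1_{\text{dR}}(S) = 2$, completing the argument.

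There is no substantive obstacle at this stage: all of the analytic difficulty, in particular the equality $\dim \ker \Delta_h = B^k$ coming from the generalised Hodge decomposition for the Witten-deformed elliptic complex together with the symmetrising isomorphism $\omega \mapsto P^{-1/2}\omega$, has already been absorbed into Proposition \ref{naturalclassiso}. The corollary is therefore purely a matter of specialising $k$ and $S$ and recording the well-known first Betti number of the torus.
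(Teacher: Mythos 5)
Your proposal is correct and matches the paper exactly: the corollary is stated there as an immediate specialisation of Proposition \ref{naturalclassiso} to $k=1$ on the 2-torus, with the dimension count supplied by the standard fact that the first Betti number of the torus is $2$. Nothing further is needed.
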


We will now address the non-vanishing property of the elements of $\mathcal{H}^1_P(S)$. For this, we will need to recall elements of Riemann surface theory and in particular some of its extensions due to Bers in the 1950s. Our strategy is to use a conformal equivalence to show that elements of $\mathcal{H}^1_P(S)$ give pseudo-analytic functions on a flat 2-torus; which by Bers' work on generalising the Riemann-Roch theorem, have the non-vanishing property.

\subsubsection{Elements of Riemann surface and pseudo-analytic function theory for the 2-torus}

Here we will introduce the required results from Riemann surface theory and pseudo-analytic function theory. In the next section, we will apply these results and conclude with a proof of Theorem \ref{P-harmonic 1-forms}.

Recall that a \emph{Riemann surface}, $S$, is a connected 2-manifold with a holomorphic atlas $\mathcal{A} = \{(U_{\alpha},z_{\alpha})\}$; namely, the transition maps $z_{\alpha} \circ z_{\beta}^{-1}$ of $\mathcal{A}$ are holomorphic between the open subsets $z_{\beta}(U_{\alpha} \cap U_{\beta})$ and $z_{\alpha}(U_{\alpha} \cap U_{\beta})$ of the complex plane (i.e. of $\mathbb{R}^2$). Charts $(U,z)$ on $S$ with holomorphic transition maps $z \circ z_{\beta}^{-1}$ for $(U_{\beta},z_\beta) \in \mathcal{A}$ will be called \emph{holomorphic charts}.

Following~\citet{bers1953partial}, if $F,G : S \to \mathbb{C}$ are functions such that $\text{Im}(\overline{F}G)>0$, $(F,G)$ is called a \emph{generating pair} if at every point $p \in S$, for any holomorphic chart $(U,z)$ on $S$ about $p$, the representatives of $F,G$ in the $(U,z)$ are H{\"o}lder continuous. The pair $(F,G)$ also defines a second pair $(F^*,G^*)$ given by
\begin{align*}
F^* &= \frac{2\overline{G}}{F\overline{G}-\overline{F}G},&
G^* &= \frac{2\overline{F}}{F\overline{G}-\overline{F}G}.
\end{align*}

The pair $(F,G)$ play similar roles to that of $1$ and $i$ play in the classical theory of holomorphic functions. Bers was able to conclude many similarities between holomorphic and pseudo-analytic functions; including definite orders of poles of meromorphic pseudo-analytic functions and differentials and a generalised Riemann-Roch theorem~\cite{bers1953partial}. We are interested in Bers' conclusion about differentials on a Riemann surface. We will now define these.

Bers~\cite{bers1953theory} defines a \emph{differential}, $W$, on a domain $D \subset S$ (connected and open subset of $S$) to be an assignment $(U,z) \mapsto W/dz$ where $(U,z)$ is a holomorphic chart with $U\subset D$ and $W/dz : U \to \mathbb{C}$ is a function such that, given two holomorphic charts $(U,z)$, $(V,\tilde{z})$ with $U,V \subset D$, for $p \in U \cap V$, we have
\begin{equation*}
\frac{W}{dz}\bigg|_p = \frac{W}{d\tilde{z}} \bigg|_p \frac{d\tilde{z}}{dz}\bigg|_p
\end{equation*}
where $\frac{d\tilde{z}}{dz}|_p = (\tilde{z} \circ z^{-1})'(z(p))$ denotes the complex derivative of the holomorphic transition map $\tilde{z} \circ z^{-1}$. Setting $f = \frac{W}{dz}$ and $\tilde{f} = \frac{W}{d\tilde{z}}$ this relation is also denoted $fdz = \tilde{f}d\tilde{z}$. One may also multiply $W$ by a function $f : S \to \mathbb{C}$ obtaining a differential $fW$ defined in the obvious way.

The differential $W$ is said to be \emph{continuous} (or have partial derivatives etc.) if $W/dz$ is continuous for every holomorphic chart $(U,z)$ with $U \subset D$. Similarly, the integral $\int_{\Gamma} W$ along a continuously differentiable curve $C : [0,1] \to D \subset S$ may be defined. That is, $\int_{\Gamma} W = \int_{0}^1 k(t)dt$ where $k : [0,1] \to \mathbb{C}$ is a function such that, for $t \in [0,1]$, if $(U,z)$ is a holomorphic chart with $U \subset D$, where $C(t) \in U$, then $(\varphi \circ C)'(t) \in \mathbb{C}$ is defined and we set
\begin{equation*}
k(t) = \frac{W}{dz}\bigg|_{C(t)} (\varphi \circ C)'(t).
\end{equation*}

With this, given a continuous differential $W$ on $S$ and a closed continuously differentiable curve $C : [0,1] \to S$, the number
\begin{equation*}
\text{Re}\int_C F^*W - i \text{Re}\int_C G^*W
\end{equation*}
is called the \emph{$(F,G)$-period of $W$ over $C$}. If the $(F,G)$-period vanishes over any $C$ homologous to zero, then $W$ is called a \emph{regular $(F,G)$-differential}. The following is a direct consequence of the generalised Riemann-Roch Theorem~\cite[Page 163-164]{bers1953partial}.

\begin{theorem}\label{Berstorus}
A regular $(F,G)$-differential $W$ on a closed Riemann surface $S$ of genus $g = 1$ is either identically zero or non-vanishing. 
\end{theorem}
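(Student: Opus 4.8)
The plan is to deduce this from the pseudo-analytic analogue of the classical fact that a nonzero abelian differential of the first kind on a genus-one surface has no zeros. In the holomorphic theory, a nonzero holomorphic differential $\omega$ on a compact Riemann surface satisfies $\sum_p \operatorname{ord}_p(\omega) = 2g-2$, which for $g=1$ forces every local order to vanish. So the essential task is to confirm that Bers' generalised Riemann-Roch theorem (pages 163--164 of the cited work) supplies exactly this degree count for regular $(F,G)$-differentials, with ``regular'' playing the role of ``holomorphic'' (no poles).

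First I would invoke Bers' \emph{similarity principle}, which guarantees that a not-identically-zero pseudo-analytic function, and hence an $(F,G)$-differential, has at each point $p$ a well-defined finite integer order $\operatorname{ord}_p(W)$, and that its zeros are isolated of finite order. The regularity hypothesis, i.e. vanishing of the $(F,G)$-period over every null-homologous cycle, is the analogue of the absence of poles, so that each local order satisfies $\operatorname{ord}_p(W) \geq 0$. Next I would read off from the generalised Riemann-Roch theorem the divisor-degree identity for a regular differential, namely $\sum_p \operatorname{ord}_p(W) = 2g-2$, matching the canonical-divisor degree of the holomorphic case.

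With these two ingredients the conclusion is immediate: for $g=1$ the right-hand side is $0$ while every summand on the left is non-negative, so every order is zero and $W$ has no zeros. Combined with the similarity principle's dichotomy (identically zero, or isolated finite-order zeros), the vanishing of all orders means $W$ is either identically zero or nowhere vanishing, as claimed. The main obstacle is not the arithmetic but verifying that Bers' framework genuinely yields the clean identity $\sum_p \operatorname{ord}_p(W) = 2g-2$ for regular differentials -- in particular that his notion of ``regular'' precisely excludes poles and that his order of a differential matches the classical one -- so I would spend the bulk of the verification tracing Bers' definition of the order of a differential and aligning his Riemann-Roch formulation with the classical statement about $2g-2$ zeros.
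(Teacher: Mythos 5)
Your proposal is correct and follows essentially the same route as the paper: the paper offers no proof at all, simply asserting the statement as ``a direct consequence of the generalised Riemann-Roch Theorem'' of Bers (pages 163--164 of the cited work), and your argument --- isolated zeros of finite non-negative order via the similarity principle, combined with the degree identity $\sum_p \operatorname{ord}_p(W) = 2g-2 = 0$ --- is precisely the standard reasoning that justifies that assertion. Your closing caveat about verifying that Bers' notion of ``regular'' excludes poles is well placed, but note that the paper's definition already builds in continuity of $W$, so regularity (vanishing $(F,G)$-periods over null-homologous cycles) plays the role of a Morera-type holomorphicity condition rather than merely excluding poles.
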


We will now apply this result in the case of $P$-harmonic forms on an oriented Riemannian 2-torus to prove Theorem \ref{P-harmonic 1-forms}.

\subsubsection{$P$-harmonic 1-forms}

The following~\cite[Chapter 10]{taylor1996partial} is a well-known means of giving any smooth surface a Riemann surface structure.

\begin{proposition}\label{RiemanniantoRiemann}
Let $S$ be an oriented Riemannian 2-manifold. Then, there exists a maximal holomorphic atlas $\mathcal{A}$ compatible with the smooth structure on $S$. The component functions $x,y$ of holomorphic charts satisfy $\star dx = dy$ where $\star$ is the Hodge star of $S$ in $U$.
\end{proposition}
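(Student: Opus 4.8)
The plan is to reduce the entire statement to the classical existence of isothermal coordinates, which is the sole genuinely analytic ingredient, and then to extract the two stated conclusions (the Hodge star relation and the holomorphicity of the transition maps) by purely algebraic computations in two dimensions. Throughout, orientation is used only to fix signs so that one lands on the holomorphic rather than the anti-holomorphic branch.

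First I would invoke the existence of local isothermal coordinates: about every point of $S$ there is a chart $(U,(x,y))$ in which the metric is conformally Euclidean, $g = \lambda^2(dx^2 + dy^2)$ with $0 < \lambda \in C^{\infty}(U)$, and with $(x,y)$ positively oriented. This is the Korn--Lichtenstein theorem (Gauss in the analytic case); it amounts to solving a Beltrami-type elliptic equation, and it is the one step I would cite rather than prove, referencing Taylor Chapter 10. The Hodge star relation then follows by a one-line computation: in two dimensions $\star$ acts on $1$-forms with $\star^2 = -1$, and taking the positively oriented orthonormal coframe $e^1 = \lambda\, dx$, $e^2 = \lambda\, dy$ gives $\star e^1 = e^2$, hence $\star(\lambda\, dx) = \lambda\, dy$ and so $\star\, dx = dy$ after cancelling the conformal factor $\lambda$. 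Note that the conformal invariance of $\star$ on $1$-forms in dimension two is exactly what makes the factor $\lambda$ irrelevant here.

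Next I would show that transition maps between overlapping positively oriented isothermal charts are holomorphic. Given a second such chart $(\tilde x, \tilde y)$ satisfying $\star\, d\tilde x = d\tilde y$, I expand this identity in the $(x,y)$ coordinates using $\star\, dx = dy$ and $\star\, dy = -dx$. Writing subscripts for partial derivatives, $\star\, d\tilde x = -\tilde x_y\, dx + \tilde x_x\, dy$, and comparing with $d\tilde y = \tilde y_x\, dx + \tilde y_y\, dy$ yields
\begin{equation*}
\tilde x_x = \tilde y_y, \qquad \tilde x_y = -\tilde y_x,
\end{equation*}
which are precisely the Cauchy--Riemann equations for $\tilde z = \tilde x + i\tilde y$ as a function of $z = x + iy$. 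Thus the transition $\tilde z \circ z^{-1}$ is holomorphic. The shared positive orientation of the two charts is what guarantees the holomorphic (rather than anti-holomorphic) sign here; had a chart been negatively oriented, one would instead find $\star\, dx = -dy$ and the transition would be anti-holomorphic.

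Finally I would assemble the atlas. The collection of all positively oriented isothermal charts covers $S$ and, by the previous step, has holomorphic transition maps, hence forms a holomorphic atlas compatible with the smooth structure; taking its maximal extension gives $\mathcal{A}$. By construction every chart satisfies $\star\, dx = dy$, which is the last assertion. The main obstacle is the very first step, the existence of isothermal coordinates: it is an elliptic regularity result of real analytic difficulty, whereas everything downstream is the elementary observation that in dimension two the Hodge star on $1$-forms is a conformal rotation, so its compatibility condition is literally the Cauchy--Riemann system.
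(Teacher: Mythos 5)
Your proof is correct and is precisely the standard argument that the paper leaves entirely to its citation of Taylor, Chapter 10 --- the paper gives no proof of this proposition at all, so you have simply filled in what the reference encapsulates: isothermal coordinates as the sole analytic input, with the Hodge star relation and the Cauchy--Riemann form of the transition maps following by two-dimensional linear algebra. The only point worth making explicit at the end is that a chart of the \emph{maximal} atlas need not be one of the constructing isothermal charts, but your Cauchy--Riemann computation is reversible (holomorphic compatibility with a chart satisfying $\star\, dx = dy$ forces $\star\, d\tilde x = d\tilde y$ in the new chart), so the final assertion does hold for every holomorphic chart.
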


We now will now prepare an application of Bers' differentials in the context of $P$-harmonic 1-forms. On any smooth manifold $M$, the \emph{complexified cotangent bundle} $T_{\mathbb{C}}^*M$ is given by
\begin{equation*}
T_{\mathbb{C}}^*M = \sqcup_{p \in M} T_p^*M \otimes_{\mathbb{R}} \mathbb{C},
\end{equation*}
where $\mathbb{C}$ is regarded with its vector space structure over $\mathbb{R}$. Regularity and operations on sections of $T_{\mathbb{C}}^*M$ are defined component-wise. For example, for $f \in C^{\infty}(M;\mathbb{C})$, the set of smooth functions $M \to \mathbb{C}$, writing $f = u+iv$ for some unique $u,v \in C^{\infty}(M)$, we set
\begin{equation*}
df = du+idv,
\end{equation*}
where $du+idv$ is a smooth section of the complexified cotangent bundle $T_{\mathbb{C}}^*M$. If $M$ is a Riemann surface, a section $\omega$ of $T_{\mathbb{C}}^*M$ is called of \emph{type $(1,0)$} if in any holomorphic chart $(U,z)$, $\imath^*\omega = fdz$ for some function $f : U \to \mathbb{C}$, where $\imath : U \subset M$ is the inclusion. The assignment of a type $(1,0)$ section of $T_{\mathbb{C}}^*M$ to the function $f : U \to \mathbb{C}$ where $\imath^*\omega = fdz$ for any holomorphic chart $(U,z)$, is a bijective equivalence between type $(1,0)$ sections of $T_{\mathbb{C}}^*M$ and Bers' differentials on $M$. If $\omega$ is a type $(1,0)$ section of $T_{\mathbb{C}}^*M$ and $W$ is its differential equivalent, then $\omega$ is continuous if and only if $W$ is. Moreover, for any continuously differentiable curve $C : [0,1] \to M$, we have
\begin{equation*}
\int_C W = \int_C \omega,
\end{equation*}
where the latter integral is a $\mathbb{C}$-linear extension of integrals of continuous 1-forms along the curve $C$. With this, we are ready to prove Theorem \ref{P-harmonic 1-forms}.

\begin{proof}[Proof of Theorem \ref{P-harmonic 1-forms}]
Let $S$ be an oriented Riemannian 2-torus and $0<P \in C^{\infty}(S)$. Corollary \ref{1st 2 of P-harmonic} proves the first two statements of the Theorem. For the third, let $\omega$ be a $P$-harmonic 1-form. So,
\begin{align*}
d\omega &= 0,&
\delta P\omega &= 0.    
\end{align*}
Then, as previously discussed, considering the 1-form $\tilde{\omega} = P^{-1/2}\omega$, with $p = \ln \sqrt{P}$, we have that
\begin{align*}
d p \tilde{\omega} &= 0,&
d\star p^{-1} \tilde{\omega} &= 0.   
\end{align*}
Now, give $S$ the structure of a closed Riemann surface of genus $g = 1$ as per Proposition \ref{RiemanniantoRiemann}. Since $p$ is smooth, the pair $(F,G)$ given by
\begin{align*}
F &= p,&
G &= i/p,
\end{align*}
is a generating pair on $S$. Then, the pair $(F^*,G^*)$ are given by
\begin{align*}
F^* &= 1/p,&
G^* &= ip. 
\end{align*}
Consider the section $\hat{\omega}$ of $T_{\mathbb{C}}^*S$ given by
\begin{equation*}
\hat{\omega} =  \tilde{\omega} + i \star \tilde{\omega}.
\end{equation*}
Then, from $\star \hat{\omega} = i\hat{\omega}$, it easily follows from Proposition \ref{RiemanniantoRiemann} that $\hat{\omega}$ is a $T_{\mathbb{C}}^*S$ section of type $(1,0)$. Hence, we may consider its Bers' differential equivalent $W$. Let $C : [0,1] \to S$ be a continuously differentiable closed curve. Then, we get
\begin{align*}
\text{Re}\int_C {F^*~}W - i \text{Re}\int_C {G^*~} W &= \text{Re}\int_C {F^*~}\hat{\omega} - i \text{Re}\int_C {G^*~} \hat{\omega}\\
&= \int_C p^{-1}\tilde{\omega}+i\int_C p \star \tilde{\omega}.
\end{align*}
If $C$ is homologous to zero, then by Stokes' Theorem for chains, since $dp^{-1}\tilde{\omega} = 0$ and $d p \star \tilde{\omega} = 0$, we obtain that
\begin{equation*}
\text{Re}\int_C {F^*~}W - i \text{Re}\int_C {G^*~} W = 0.    
\end{equation*}
Thus, $W$ is a regular $(F,G)$ differential. Hence, from Theorem \ref{Berstorus}, we obtain that $W$ is either identically zero or non-vanishing. Thus, $\omega$ is either identically zero or non-vanishing.
\end{proof}

We will now move on to $P$-harmonic vector fields.

\subsubsection{$P$-harmonic vector fields}\label{subsec:P-harmonic vect}

We will now prove Theorem \ref{P-harmonic vector fields}, which primarily involves reinterpreting 1-form data in terms of the corresponding vector field data.

\begin{proof}[Proof of Theorem \ref{P-harmonic vector fields}]
Let $S$ be an oriented Riemannian 2-torus and $0 < P \in C^{\infty}(S)$. Denote by $\mathcal{H}_P V(S)$ the real vector space of $P$-harmonic vector fields on $S$.

\begin{proof}[Proof of statements 1 and 2]
Consider the map $\mathcal{H}_P V(S) \to \mathcal{H}_P^1(S)$ given by $X \mapsto X^{\flat}$. Then, from Theorem \ref{P-harmonic 1-forms}, we immediately obtain that $\dim \mathcal{H}_P V(S) = 2$ and that every $X \in \mathcal{H}_P V(S)$ is either identically zero or non-vanishing.
\end{proof}

For the remaining statements, let $X \in \mathcal{H}_P V(S)$ be non-vanishing and let $\gamma_1,\gamma_2$ be classes generating the first homology of $S$.

\begin{proof}[Proof of statement 3]
Set $Y = X^{\perp}/P\|X\|^2$, where $X^{\perp} = (\star X^{\flat})^{\sharp}$ is the perpendicular to $X$. The 1-forms $\omega = X^{\flat}$ and $\eta = {P \star \omega}$, are closed 1-forms on $S$ and
\begin{align*}
\omega(X/\|X\|^2) &= 1 & \eta(X/\|X\|^2) &= 0\\
\omega(Y) &= 0 & \eta(Y) &= 1.
\end{align*}
Hence, $[X/\|X\|^2,Y] = 0$. Hence, from a well-known result in Lie group theory (see Appendix \ref{app:rottranslinear}, Proposition \ref{Arnoldcommute}), $X/\|X\|^2$ is linearisable. From, for example, $\eta(X) = 0$, we see that $X$ is winding and in turn, from a well-known result in celestial mechanics (see Appendix \ref{app:rottranslinear}, Theorem \ref{celestialmechanics}) if $X$ has Diophantine winding number, then $X$ is linearisable.
\end{proof}

\begin{proof}[Proof of statement 4]
From the previous, we have a non-vanishing $\eta = {P \star \omega}$ with $\eta(X) = 0$. In particular (see Proposition \ref{correctnessrotationaltransform} in Appendix \ref{app:rottrans}), $[\eta] \neq 0$ and the winding number of $X$ with respect to $\gamma_1,\gamma_2$ is given by $[(a,b)]$ where,
\begin{align*}
a &= -\int_{\gamma_2} \eta = -\int_{\gamma_2} P \star X^{\flat},&
b &= \int_{\gamma_1} \eta = \int_{\gamma_1} P \star X^{\flat}.
\end{align*}
Moreover, for a curve $C : [0,1] \to S$ we have
\begin{equation*}
\gamma^*(P \star X^{\flat}) = P(\gamma(t)) (\star X^{\flat})(\gamma'(t))dt = P(\gamma(t)) (i_X\mu)(\gamma'(t))dt = \mu(X(\gamma(t)),\gamma'(t))dt.
\end{equation*}
So, if $\gamma_1,\gamma_2$ are represented by curves $C_1$ and $C_2$,
\begin{gather*}
a = -\int_{\gamma_2} P \star X^{\flat} = -\int_{C_2} P \star X^{\flat} = \int_0^1 \mu(X(\gamma(t)),\gamma'(t))dt,\\
b = \int_{\gamma_1} P \star X^{\flat} = \int_{C_1} P \star X^{\flat} = \int_0^1 \mu(X(\gamma(t)),\gamma'(t))dt.
\end{gather*}
\end{proof}

\begin{proof}[Proof of statement 5]
We have the linear isomorphisms
\begin{align*}
\mathcal{H}_PV(S) &\to \mathcal{H}_P^1(S), & X &\mapsto X^{\flat},\\
\mathcal{H}_P^1(S) &\to \mathcal{H}_{1/P}^1(S), & \omega &\mapsto {P\star\omega},\\
\mathcal{H}_{1/P}^1(S) &\to H^1_{\text{dR}}(S), & \omega &\mapsto [\omega].
\end{align*}
The final linear isomorphism is due to Theorem \ref{P-harmonic 1-forms}. Hence, we have the linear isomorphism
\begin{align*}
\mathcal{H}_PV(S) &\to H^1_{\text{dR}}(S),& 
X &\mapsto [{P \star X^{\flat}}].
\end{align*}
\end{proof}
Hence, by de Rham's Theorem, we have the linear isomorphism
\begin{gather*}
L : \mathcal{H}_PV(S) \to \mathbb{R}^2,\\
L(X) = \left( -\int_{\gamma_2}P\star X^{\flat},\int{\gamma_1} P \star X^{\flat} \right).
\end{gather*}
Set $\mathbb{H}_PV(S) = (\mathcal{H}_PV(S) \backslash\{0\}) / \sim$ to be the projective quotient space where $X,Y \in \mathcal{H}_PV(S)$ are equivalent $X \sim Y$ if and only if $X$ and $Y$ are linearly independent. Then, by injectivity, $L$ descends to an injection, $\mathbb{L} : \mathbb{H}_PV(S) \to \mathbb{P}(\mathbb{R})$. This map is also surjective since $L$ is. Lastly, for $X \in \mathcal{H}_PV(S)$, from statement 4, we see that $L([X])$ gives the winding number of $X$ with respect to $\gamma_1$ and $\gamma_2$. The result now follows from the definition of $\mathbb{H}_PV(S)$.
\end{proof}

We will conclude this section with a proof of Theorem \ref{mainresult}.

\begin{proof}[Proof of Theorem \ref{mainresult}]
Let $M$ be an oriented Riemannian $3$-manifold with boundary. Let $S$ be an embedded 2-torus in $M$. Let $n : S \to TM$ be a unit normal for $S$. Let $B$ be a vector field on $M$ which satisfies
\begin{align*}
\nabla \cdot B|_S &= 0,&
\nabla \times B|_S \cdot n &= 0,&
B|_S \cdot n &= 0.
\end{align*}
Consider the $\iota$-related vector field $\imath^*B$ on $S$ where $i : S \subset M$. Assume there exists a solution $u \in C^{\infty}(S)$ to the cohomological equation
\begin{equation*}
du(\imath^*B) = \partial_n B.
\end{equation*}
Now, let $b = B^{\flat}$ and $\omega = \imath^*b$. Then, from Proposition \ref{d is normcurl}, denonting by $\mu_S$ the area element on $S$,
\begin{equation*}
d\omega = g|_S(\nabla \times B|_S,n)\mu_S = 0.
\end{equation*}
From Corollary \ref{cohomology is area pres}, taking $P = e^u$, the codifferential $\delta_S$ on $S$ gives
\begin{equation*}
\delta_S P\omega = 0.    
\end{equation*}
Hence, $\omega$ is a $P$-harmonic 1-form on $S$. Considering the sharp $\sharp_S$ on $S$, $\omega^{\sharp_S} = \imath^*B$. Hence, $\imath^*B$ is a $P$-harmonic vector field on $S$. The conclusion now follows from Theorem \ref{P-harmonic vector fields}.
\end{proof}

\section{Applications and Examples}\label{sec:exandapp}

An application mentioned of Theorem \ref{mainresult} is Proposition \ref{for first integrals}. We will now prove this.

\begin{proof}[Proof of Proposition \ref{for first integrals}]
Let $M$ be an oriented Riemannian $3$-manifold with boundary. Let $S$ be a codimension 1 embedded submanifold in $M$. Let $B$ be a vector field on $M$ which satisfies $\nabla \cdot B|_S = 0$. Assume that $B$ has a first integral $\rho \in C^{\infty}(M)$ which is constant and regular on $S$.

Consider now the neighbourhood $V = \{\nabla \rho \neq 0\}$ of $S$ in $M$. Set $\tilde{u} = \|\nabla \rho\|$ and $N = \nabla f|_V /\tilde{u} |_V : V \to TM$. We have that $n = N|_S$ so that $B|_S \cdot n = 0$ and,
\begin{equation*}
\partial_n B = g|_S([N,B],N) = \frac{[N,B](\rho)}{\tilde{u}}~\bigg|_S.
\end{equation*}
Next, $B(\rho) = 0$ and $N(\rho) = g(N,\nabla \rho) = \tilde{u}$ so that,
\begin{equation*}
\frac{[N,B](\rho)}{\tilde{u}} = \frac{N(B(\rho))-B(N(\rho))}{\tilde{u}} = -B(\tilde{u})/\tilde{u} = B(-\ln\tilde{u}).
\end{equation*}
Hence, since $B$ is tangential, setting $u = -\ln \|\nabla \rho\||_S = -\ln\tilde{u}|_S$, we have
\begin{equation*}
\frac{[N,B](\rho)}{\tilde{u}}\bigg|_S = du(\imath^*B).
\end{equation*}
So that $du(\imath^*B) = \partial_n B$.
\end{proof}

On the topic of first integrals, we now provide a proof of Corollary \ref{GeneralArnold3D}.

\begin{proof}[Proof of Corollary \ref{GeneralArnold3D}]
Let $M$ be an oriented Riemannian $3$-manifold with boundary. Let $B$ be a vector field on $M$ which satisfies, for some function $\rho$,
\begin{align*}
\nabla \cdot B &= 0,& B \cdot \nabla \rho &= 0,& (\nabla \times B) \cdot \nabla \rho &= 0.
\end{align*}

First, observe the following. Let $S'$ be a closed connected component of a regular level set of $\rho$. For later, give $S'$ the orientation where the unit normal $n = \frac{\nabla \rho|_{S'}}{\|\nabla \rho\||_{S'}}$ is outward. Setting $u = -\ln\|\nabla \rho\||_{S'}$, and $\imath : S' \subset M$ to be the inclusion, our assumptions together with Proposition \ref{for first integrals} give,
\begin{align*}
\nabla \cdot B|_{S'} &= 0,& \nabla \times B|_{S'} \cdot n &= 0,& B|_{S'} \cdot n &= 0,& du(\imath^*B) &= \partial_n B.
\end{align*}
Hence, by Proposition \ref{d is normcurl} and Corollary \ref{cohomology is area pres}, $\imath^*B$ is a $P = \|\nabla \rho\||_{S'}$-harmonic vector field on $S'$.

In particular, fix a closed connected component $S$ of a regular level set of $\rho$. From the above, by Theorem \ref{mainresult}, if $B$ is not identically zero on $S$ and $S$ is a 2-torus, then $B$ is non-vanishing on $S$. Conversely, if $B$ is non-vanishing on $S$, then since $S$ is closed and connected and oriented, the non-vanishing vector field $\imath^*B$ on $S$ implies that $S$ is a 2-torus.

Assume $B$ is non-vanishing on $S$ and that $S$ is a 2-torus. Consider the neighborhood $V = \{\nabla \rho \neq 0\}$ of $S$ and the local vector field, $N = \frac{\nabla \rho|_V}{\|\nabla \rho\||_V}$. Recall that $S$ is compact. In particular, if $S \subset \partial M$, since $S$ is then embedded in $\partial M$, $S$ is a connected component of $\partial M$. Thus, if $S \subset \partial M = \emptyset$ (or $S \subset \partial M$), the Flowout Theorem~\cite[Theorem 9.20]{Lee} (the Boundary Flowout Theorem~\cite[Theorem 9.24]{Lee}) with $V$ and $N$ gives an $\epsilon > 0$ and a diffeomorphism $\Sigma : S \times I \to U$ where $I = (-\epsilon,\epsilon)$ ($I = [0,\epsilon)$) and $U$ is open subset in $M$ such that the projection $z : S \times I \to I$ satisfies $z = \rho \circ \Sigma + c$ for some constant $c \in \mathbb{R}$.

With this, let $z \in I$. Then, $S' = \Sigma(S \times \{z\})$ is an embedded 2-torus in $M$ for which $X_z = \imath^*B$ is a $P = \|\nabla \rho\||_{S'}$-harmonic vector field on $S'$ where $\imath : S' \subset M$ is the inclusion. Hence, following the proof of statement 3 in Theorem \ref{P-harmonic vector fields}, $[X/\|X\|^2,Y] = 0$ where $Y_z = X_z^{\perp}/P\|X_z\|^2$, $X_z^{\perp} = (\star X_z^{\flat})^{\sharp}$ being the perpendicular to $X_z$ and  $P = \|\nabla \rho\||_{S'}$. Thus, with the local vector fields
\begin{align*}
X &= \frac{B|_U}{\|B\|^2|_U},& Y &= \frac{\nabla \rho \times B|_U}{\|\nabla \rho\|^2 \|B\|^2|_U} = \frac{\nabla \rho \times B|_U}{\|\nabla \rho \times B\|^2|_U},
\end{align*}
we have that $[X,Y] = 0$ on $U$. Hence, using the diffeomorphism $\Sigma$ with Proposition \ref{Arnoldcommute3D} in Appendix \ref{app:rottranslinear}, there exists a diffeomorphism $\Phi : U \to \mathbb{R}^2/\mathbb{Z}^2 \times I$ where $I \subset \mathbb{R}$ is an interval, such that $B|_U$ is non-vanishing and,
\begin{equation*}
\Phi_* \left(\frac{B|_U}{\|B\|^2 |_U}\right) = a(z) \frac{\partial}{\partial x} + b(z) \frac{\partial}{\partial y}
\end{equation*}
where $a,b : I \to \mathbb{R}$ are smooth functions and $z$ is the projection onto the factor $I$.
\end{proof}

We will now clarify the application of our results to strong Beltrami fields.

\begin{proof}[Proof of Corollary \ref{advertcorollary}]
Let $M$ be a manifold with boundary embedded in $\mathbb{R}^3$ with the inherited Euclidean structure. Let $S$ be a toroidal connected component of $\partial M$. Let $n : \partial M \to \mathbb{R}^3$ be the outward unit normal on $M$. Let $B$ be a vector field on $M$ satisfying, for some $\lambda \in \mathbb{R}$,
\begin{align*}
\nabla \cdot B &= 0,&
\nabla \times B &= \lambda B,&
B|_{\partial M} \cdot n &= 0.
\end{align*}

Let $\imath : S \subset M$ denote the inclusion and consider $\imath^*B$. Corollary \ref{cohomology is area pres}, Theorem \ref{celestialmechanics} and~\cite[Lemma 2.1]{gerner2021typical} easily show that the three statements in Corollary \ref{advertcorollary} are indeed equivalent.

Lastly, consider a solution $u$ to $du(\imath^*B) = \partial_n B$ on $S$. We have that $\imath^*B$ is $P = e^{-u}$-harmonic by Proposition \ref{d is normcurl} and Corollary \ref{cohomology is area pres}. Take closed curves $C_1,C_2 : [0,1] \to S \subset \mathbb{R}^3$ whose homology classes generate the first homology $H_1(S)$ and form the integrals
\begin{equation*}
I(C_i,u) \coloneqq \int_{0}^1 \exp{(-u(C_i(t)))} \det(C_i'(t),B(C_i(t)),n(C_i(t)))dt,~i \in \{1,2 \}.
\end{equation*}
Then, by Theorem \ref{P-harmonic vector fields}, $[(-I(C_2,u),I(C_1,u))]$ in $\mathbb{P(R)}$ is the winding number and of course $(I(C_1,u),I(C_2,u))$ is Diophantine if and only if $(-I(C_2,u),I(C_1,u))$ is Diophantine (see also the proof of Proposition \ref{correctnessrotationaltransform} in Appendix \ref{app:rottranscorrect}). This gives the final part of Corollary \ref{advertcorollary}.
\end{proof}

We will now address the example on the sphere from the introduction.

\begin{proof}[Proof of Corollary \ref{divfreesphere}]
Let $B$ be a divergence-free vector field on an oriented Riemannian 3-manifold $M$ with boundary. Suppose that $B$ and $\nabla \times B$ have a mutual first integral $\rho$ on $M$ with a connected component $S$ of a regular level set diffeomorphic to $\mathbb{S}^2$. 

Just as observed in~\cite{enciso2016beltrami}, there is a neighbourhood $U$ of $S$ foliated by spheres for which $\rho$ is constant and regular thereon. Then, on such a sphere $S' \subset U$, by Proposition \ref{for first integrals} and Corollary \ref{cohomology is area pres}, setting $b = B^{\flat}$ and $\omega' = \imath'^*b$, we obtain that $\omega$ is $P$-harmonic on $S'$ with $P = \|\nabla \rho\||_S$. However, recall that from Proposition \ref{naturalclassiso}, that $\mathcal{H}^1_P(S')$ and $H^1_{\text{dR}}(S')$ are isomorphic. In particular, we have $H^1_{\text{dR}}(S') = \{0\}$ so that $\omega' = 0$. Hence, $B|_S' = 0$. Thus, $B|_U = 0$.
\end{proof}

Our results also provide some easy bounds on the size of the space of divergence-free Beltrami fields for a special class of proportionality factors in a similar vein to~\cite{enciso2016beltrami,clelland2020beltrami}.

\begin{corollary}\label{2dimBeltrami}
Let $\lambda \in C^{\infty}(M)$ be a smooth function on an oriented connected Riemannian 3-manifold with boundary which is constant and regular on $\partial M$, where $\partial M$ is diffeomorphic to a 2-torus. Then, the space of vector fields $B$ satisfying
\begin{align*}\label{divfreeBeltrami}
\nabla \cdot B &= 0,&
\nabla \times B &= \lambda B,
\end{align*}
is at most two dimensional. 
\end{corollary}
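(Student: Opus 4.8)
The plan is to push the problem onto the boundary torus, where Theorem \ref{P-harmonic vector fields} bounds the relevant space, and then to show that restriction to $\partial M$ loses no information. First I would note that $\lambda$ is automatically a mutual first integral of $B$ and $\nabla\times B$: taking the divergence of $\nabla\times B=\lambda B$ and using $\nabla\cdot(\nabla\times B)=0$ together with $\nabla\cdot B=0$ gives $B\cdot\nabla\lambda=0$, whence $(\nabla\times B)\cdot\nabla\lambda=\lambda\,B\cdot\nabla\lambda=0$ as well. Since $\lambda$ is constant and regular on the $2$-torus $\partial M$, Proposition \ref{for first integrals} applied with $\rho=\lambda$ and $S=\partial M$ yields $B|_{\partial M}\cdot n=0$ together with the cohomological equation $du(\imath^*B)=\partial_n B$, where $u=-\ln\|\nabla\lambda\||_{\partial M}$ and $\imath:\partial M\subset M$ is the inclusion.

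Next I would replay the identification from the proof of Theorem \ref{mainresult}. Writing $\omega=\imath^*B^\flat$, Proposition \ref{d is normcurl} gives $d\omega=g|_{\partial M}(\nabla\times B|_{\partial M},n)\,\mu_{\partial M}$, which vanishes because $\nabla\times B\cdot n=\lambda(B\cdot n)=0$ on $\partial M$; and Corollary \ref{cohomology is area pres} turns the cohomological equation into $\delta_{\partial M}P\omega=0$ with $P=e^{-u}=\|\nabla\lambda\||_{\partial M}$. Thus $\imath^*B=\omega^\sharp$ is a $P$-harmonic vector field on $\partial M$ for a single $P$ that depends only on $\lambda$ and the metric, not on $B$. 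Because the equations $\nabla\cdot B=0$ and $\nabla\times B=\lambda B$ are linear in $B$ for fixed $\lambda$, the solution set $V$ is a real vector space, and $R:V\to\mathcal{H}_PV(\partial M)$, $R(B)=\imath^*B$, is a well-defined linear map. Theorem \ref{P-harmonic vector fields} gives $\dim\mathcal{H}_PV(\partial M)=2$, so the corollary reduces to proving that $R$ is injective.

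The injectivity is where I expect the real work to lie. Its kernel is the set of solutions with $B|_{\partial M}=0$, and I must show these vanish on the connected manifold $M$; this is precisely a unique-continuation statement, namely \cite[Lemma 2.1]{gerner2021typical}, which I would invoke. For a self-contained route valid for any smooth $\lambda$, I would instead pass to a second-order equation: with $\beta=B^\flat$, the relations $\delta\beta=0$ and $\star d\beta=\lambda\beta$ give, after a short Hodge-star computation, $\Delta\beta=\lambda^2\beta+\star(d\lambda\wedge\beta)$, an equation with scalar principal symbol $|\xi|^2$ (hence elliptic) and zeroth-order right-hand side. Since $B|_{\partial M}=0$ annihilates all tangential derivatives of $\beta$ along $\partial M$, while $d\beta|_{\partial M}=\lambda\star\beta|_{\partial M}=0$ and $\delta\beta=0$ then force the normal derivative of $\beta$ to vanish there as well, $\beta$ has zero Cauchy data on $\partial M$. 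Aronszajn-type strong unique continuation for this elliptic equation then gives $\beta\equiv0$ near $\partial M$, and connectedness of $M$ propagates this to $\beta\equiv0$, i.e. $B=0$. Hence $\ker R=\{0\}$, the map $R$ is injective, and $\dim V\le 2$.
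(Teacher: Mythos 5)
Your proof is correct and follows the same overall strategy as the paper's: reduce to the boundary torus, identify $\imath^*B$ as a $P$-harmonic vector field (with $P$ determined by $\lambda$ and the metric alone, so the target space does not depend on $B$), and use Gerner's unique-continuation lemma \cite[Lemma 2.1]{gerner2021typical} to see that nothing is lost in restricting to $\partial M$. The one place where you diverge is the final counting step. You bound the solution space by $\dim \mathcal{H}_P V(\partial M) = 2$, i.e.\ statement 1 of Theorem \ref{P-harmonic vector fields}, which rests only on the Witten-cohomology isomorphism of Proposition \ref{naturalclassiso}; the paper instead invokes the non-vanishing dichotomy of Theorem \ref{mainresult} to conclude that any solution not identically zero is nowhere vanishing on $\partial M$, so that evaluation at a single point, $B \mapsto \imath^*B|_p \in T_p\partial M$, is already injective into a two-dimensional space. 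Your route is marginally more economical in its inputs, since it bypasses the Bers-theoretic non-vanishing result entirely and needs only the cohomological dimension count, whereas the paper's route needs the dichotomy but not the explicit identification of the image inside $\mathcal{H}_P V(\partial M)$. Your optional Aronszajn-based replacement for Gerner's lemma is a reasonable sketch of essentially the same unique-continuation content (the zero-Cauchy-data argument you outline is exactly what makes Vainshtein's lemma work), though as written it is only a sketch and the citation of Gerner suffices.
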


\begin{proof}
Note that our assumptions on $\lambda$ imply the vector fields under consideration are tangent to the boundary. Let $B$ be a vector field satisfying $\nabla \cdot B = 0$ and $\nabla \times B = \lambda B$. Assume that $B$ is not identically zero. Then, Vainshtein’s Lemma for abstract manifolds given by Gerner~\cite[Lemma 2.1]{gerner2021typical} immediately implies that $B$ does not entirely vanish on $\partial M$. Thus, by Proposition \ref{for first integrals} and Theorem \ref{mainresult} we get that $B$ is non-vanishing on $\partial M$. In particular, denoting by $\imath : \partial M \subset M$ the inclusion and fixing a point $p \in \partial M$, the map $B \mapsto \imath^*B|_p$ is a linear injection from the space of divergence-free Beltrami fields with proportionality factor $\lambda$ into $T_p \partial M$.
\end{proof}

We will now illustrate with a simple example that the cohomological assumption in Theorem \ref{mainresult} is not redundant.

\begin{example}
We will first consider on $\mathbb{R}^3$ the vector field
\begin{equation*}
\hat{B} = f\frac{\partial}{\partial x} + f\frac{\partial}{\partial y} + h \frac{\partial}{\partial z},
\end{equation*}
where
\begin{align*}
f &= (z+1)\cos{(x+y)},&
h &= (z^2+2z)\sin{(x+y)}.     
\end{align*}
Eventually, we will lower this vector field into the manifold $M = (\mathbb{R}/2\pi \mathbb{Z})^2 \times \mathbb{R}$ along with some of its properties. To this end, writing $\hat{S}$ for the plane $\{z=0\}$, one finds
\begin{align*}
\nabla \cdot \hat{B} &= 0,&
h |_{\hat{S}} &= 0,&
\nabla \times \hat{B} \cdot \frac{\partial}{\partial z} &= 0.
\end{align*}
and denoting by $\hat{\imath} : \hat{S} \subset \mathbb{R}^3$ the inclusion, for any $u \in C^{\infty}(\hat{S})$, we get
\begin{equation*}
du(\imath^*\hat{B}) = u_x f|_{z = 0} + u_y f|_{z = 0} = (u_x+u_y)\cos{(x+y)}.
\end{equation*}
on the other hand, since $\frac{\partial}{\partial z}$ is a geodesic vector field of unit length,
\begin{equation*}
\partial_{\hat{n}}\hat{B} = \partial_z \cdot [\hat{B},\partial_z] = \partial_z(\hat{B} \cdot \partial_z) = h_z|_{z=0} = 2\sin{(x+y)}.
\end{equation*}
Hence, $du(\imath^*\hat{B}) = \partial_{\hat{n}}\hat{B}$ has no solutions. Since $f$ and $h$ are $2\pi$-periodic in $x$ and $y$, $\hat{B}$ descends to a vector field $B$ on $M$. Given the inherited oriented Riemannian structure from $\mathbb{R}^3$ on $M$, we have
\begin{align*}
\nabla \cdot B &= 0,&
\nabla \times B|_{S} &= 0,&
\nabla \times B|_{S} \cdot n  &= 0,
\end{align*}
where $S$ is the 2-torus embedded in $M$ lowered from $\hat{S}$ in $\mathbb{R}^3$ with outward unit normal $n$. The other data shows that there does not exist a solution $u \in C^{\infty}(S)$ to $du(\imath^*\hat{B}) = \partial_{n}B$. This is expected from Theorem \ref{mainresult} since we see that $B$ has many zeros on $S$ but is not identically zero on $S$.
\end{example}

We will now illustrate how the cohomology class of a non-vanishing closed 1-form $\omega$ on a 2-torus $S$ does not explain the topology of the integral curves of $\omega^{\sharp}$. \newline

\begin{example}
Consider the non-vanishing closed 1-forms,
\begin{align*}
\hat{\omega} &= dy,&
\hat{\eta} &= d\sin{x} + dy = \cos{x}dx + dy.
\end{align*}
Then, $\hat{\omega}$ and $\hat{\eta}$ descend to non-vanishing cohomologous 1-forms $\omega$ and $\eta$ on the 2-torus $S = \mathbb{R}^2/2\pi \mathbb{Z}^2$. We see that $\omega^{\sharp}$ is periodic. However, $\eta^{\sharp}$ is not periodic. To see this, consider $X = \hat{\eta}^{\sharp}$ and let $C$ be an integral curve of $X$ and $T > 0$. Writing $C = (C^1,C^2)$, we have
\begin{equation*}
\dot{C} = (\dot{C}^1,\dot{C}^2) = (\cos C^1,1).
\end{equation*}
In particular,
\begin{equation*}
\dot{C}^1(t)^2 = \cos {(C^1(t))} \dot{C}^1(t) = \frac{d}{dt} \sin{(C^1(t))}.
\end{equation*}
Hence,
\begin{equation*}
\int_0^T \cos^2{(C^1(t))} dt = \int_0^T \dot{C}^1(t)^2 dt = \sin{(C^1(T))}-\sin{(C^1(0))}.
\end{equation*}
Now, assume $p = C(0)$ is with $\cos p^1 \neq 0$. Then, by continuity, we must have
\begin{equation*}
\int_0^T \cos^2{(C^1(t))} dt > 0.
\end{equation*}
So that
\begin{equation*}
\sin{(C^1(T))} \neq \sin{(C^1(0))}.
\end{equation*}
Hence
\begin{equation*}
\nexists k \in \mathbb{Z} \text{ such that } C^1(T) = C^1(0) + 2\pi k.
\end{equation*}
Thus, in $S$, $C(T) + 2\pi\mathbb{Z}^2 \neq  C(0) + 2\pi\mathbb{Z}^2$. Hence, $\eta^{\sharp}$ is not periodic. On the other hand, the perpendicular vector fields $W = (\star\omega)^{\sharp}$ and $H = (\star\eta)^{\sharp}$ have the same winding number with respect to every pair of generators since $[\omega] = [\eta]$ and the flows of $W$ and $H$ are explicitly given by
\begin{align*}
\psi^W(t,p + 2\pi\mathbb{Z}^2) &= p-t(1,0) + 2\pi\mathbb{Z}^2 ,&
\psi^H(t,p + 2\pi\mathbb{Z}^2) &= p-t(1,0) + (0,\sin{p^1}+\sin{(t-p^1)}) + 2\pi\mathbb{Z}^2.
\end{align*}
The curves $t\mapsto \psi^W(2\pi t,p+ 2\pi\mathbb{Z}^2)$ and $t \mapsto \psi^H(2\pi t,p+ 2\pi\mathbb{Z}^2)$, $t \in [0,1]$, are closed and easily seen to be homologous to the curve $C_1 : [0,1] \to S$ where $C_1(t) = [-t(1,0)]$. This situation holds much more generally; as will be discussed in a future paper which will further explore the rotational transform in the context of magnetic confinement fusion.
\end{example}

\section{Discussion}\label{sec:RigidityWitten}

Theorem \ref{P-harmonic 1-forms} in the case of constant $P$ may be proven with the classical Riemann-Roch Theorem. In fact, together with Calibi's work on intrinsically harmonic forms~\cite{calabi1969intrinsic}, one can establish the following known result.

\begin{proposition}\label{Calibitorus}
Let $S$ be an oriented 2-torus. Let $\omega$ be a closed 1-form. Then, the following are equivalent.
\begin{enumerate}
    \item $\delta\omega = 0$ for some metric $g$ on $S$.
    \item $\omega$ is either identically zero or non-vanishing.
\end{enumerate}
\end{proposition}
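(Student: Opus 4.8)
The plan is to prove the two implications separately, relying on Theorem~\ref{P-harmonic 1-forms} for $(1)\Rightarrow(2)$ and on a direct metric construction (an instance of Calabi's intrinsic characterisation) for $(2)\Rightarrow(1)$. The first implication is immediate: if $\delta\omega=0$ for some metric $g$, then since $\omega$ is closed by hypothesis, $\omega$ is a harmonic $1$-form on the oriented Riemannian $2$-torus $(S,g)$, i.e. $\omega\in\mathcal{H}^1_P(S)$ with $P\equiv 1$. Statement~3 of Theorem~\ref{P-harmonic 1-forms} then gives directly that $\omega$ is identically zero or nowhere vanishing.

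For $(2)\Rightarrow(1)$ I would first reduce the analytic statement to a topological one. If $\omega\equiv 0$ any metric works, so assume $\omega$ is nowhere vanishing. The key remark is that it suffices to produce a \emph{closed} $1$-form $\eta$ with $\omega\wedge\eta$ nowhere vanishing: by connectedness $\omega\wedge\eta$ then has constant sign, so after replacing $\eta$ by $-\eta$ we may orient $S$ so that $\omega\wedge\eta>0$. The symmetric $2$-tensor $g=\omega\otimes\omega+\eta\otimes\eta$ is then positive definite (the common kernel of $\omega$ and $\eta$ is trivial), $\{\omega,\eta\}$ is an oriented orthonormal coframe for it, so $\star_g\omega=\eta$, and $\delta\omega=\pm\star_g\,d(\star_g\omega)=\pm\star_g\,d\eta=0$, which is exactly $(1)$. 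Everything thus reduces to the existence of a closed $\eta$ transverse to $\omega$.

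To construct $\eta$, note first that $[\omega]\neq 0$, since a nowhere-vanishing closed form cannot equal $df$ (the function $f$ would have a critical point on the compact $S$). I would split according to the foliation $\ker\omega$. In the rational case $\omega=c\,d\theta$ for a fibration $\theta:S\to S^1$ (Tischler's theorem); this oriented circle bundle over $S^1$ is trivial, and if $\varphi$ denotes the fibre coordinate of a trivialisation then $\eta=d\varphi$ is closed with $\omega\wedge\eta\neq 0$ everywhere, settling this case explicitly. In the irrational case I would fix a flat metric, take the harmonic representative $\omega_0=a\,dx+b\,dy$ of $[\omega]$ and set $\eta_0=-b\,dx+a\,dy$, and look for $\eta=\eta_0+dh$. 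Writing $\omega=\omega_1\,dx+\omega_2\,dy$ and $V=\omega_2\,\partial_x-\omega_1\,\partial_y$ (which spans $\ker\omega$), a short computation using $d\omega=0$ rewrites the requirement $\omega\wedge(\eta_0+dh)>0$ as the cohomological inequality $V(h)>-\rho$, where $\omega\wedge\eta_0=\rho\,dx\wedge dy$ and $\int_S\rho>0$ (it equals $a^2+b^2$ times the area).

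The hard part is the solvability of this inequality, and this is where the irrational structure enters. The relevant criterion is that $V(h)>-\rho$ has a smooth solution provided $\int_S\rho\,d\nu>0$ for every $V$-invariant probability measure $\nu$. I would verify this hypothesis using two facts special to the irrational case: $V$ is divergence-free (immediate from $d\omega=0$), so Lebesgue measure is $V$-invariant; and the leaf flow, whose generator is smooth, non-vanishing and has dense orbits (no closed leaves when $[\omega]$ is irrational), is uniquely ergodic by Denjoy theory applied to the first-return circle diffeomorphism. Unique ergodicity makes Lebesgue the \emph{only} invariant measure, against which $\int_S\rho\,d\nu$ is a positive multiple of $a^2+b^2$; hence $\eta$ exists and $(1)$ follows. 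The substantive points are therefore the unique ergodicity and the cohomological-inequality solvability theorem; conceptually this is precisely the statement that nowhere-vanishing closed $1$-forms on the $2$-torus satisfy Calabi's transitivity condition, which is what renders them intrinsically harmonic.
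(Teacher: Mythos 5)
Your proposal is correct, but it diverges from the paper in one direction. For $(1)\Rightarrow(2)$ you invoke statement 3 of Theorem \ref{P-harmonic 1-forms} with $P\equiv 1$; the paper's appendix proof does essentially the same thing but directly, passing to the genus-$1$ Riemann surface of Proposition \ref{RiemanniantoRiemann} and applying the classical Riemann--Roch theorem to the holomorphic form $\omega+i\star\omega$ rather than routing through the Bers-based general theorem --- logically equivalent and non-circular either way, though your version uses heavier machinery than needed. The real difference is in $(2)\Rightarrow(1)$: the paper simply quotes Calabi's intrinsic-harmonicity theorem (Proposition \ref{Calibiprop}) as a black box, whereas you supply a self-contained proof of the torus case, reducing to the existence of a closed $\eta$ transverse to $\omega$ (so that $g=\omega\otimes\omega+\eta\otimes\eta$ does the job), and then splitting into the rank-one-period case (Tischler-type fibration over $S^1$, take $\eta$ dual to the fibre) and the irrational case (unique ergodicity of the characteristic flow via Denjoy, plus solvability of the cohomological inequality $V(h)>-\rho$). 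Your dynamical argument is sound --- in the uniquely ergodic setting the inequality can even be solved explicitly by the standard Birkhoff-averaging primitive $h_T(x)=\tfrac{1}{T}\int_0^T(T-t)\rho(\phi_t(x))\,dt$, which yields $V(h_T)+\rho=\tfrac{1}{T}\int_0^T\rho\circ\phi_t\,dt\to\int_S\rho\,d\mathrm{Leb}>0$ uniformly, so you do not need to cite a general solvability criterion --- and what it buys is independence from Calabi's general theorem at the cost of case analysis and some foliation-theoretic bookkeeping (no closed leaves in the irrational case, existence of a closed transversal with a well-defined return map). What the paper's citation buys is brevity and uniformity across all compact oriented manifolds.
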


This is proven in the Appendix \ref{app:rottranslinear} and is the covariant approach we take to prove the other known result; namely Theorem \ref{celestialmechanics}, which is also proven there. It would be preferable to shorten the proof of Theorem \ref{P-harmonic 1-forms} by a suitable reduction to the constant $P$ case. For instance, from the conclusion of Theorem \ref{P-harmonic 1-forms} we get the following.

\begin{corollary}\label{noncanonicalmetrics}
Let $S$ be an oriented 2-torus and $0 < P \in C^{\infty}(S)$. Let $g$ be a metric on $S$ and consider the $P$-harmonic 1-forms $\mathcal{H}_P^1(S,g)$ on $(S,g)$. Let $\omega,\eta \in \mathcal{H}_P^1(S,g)$ form a basis for $\mathcal{H}_P^1(S,g)$. Then, there exists a metric $\tilde{g}$ with induced Hodge star $\tilde{\star}$ satisfying
\begin{equation*}
\tilde{\star} \omega = \eta
\end{equation*}
so that $\mathcal{H}_P^1(S,g) = \mathcal{H}^1(S,\tilde{g})$, the harmonic 1-forms on $(S,\tilde{g})$.
\end{corollary}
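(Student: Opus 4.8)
The plan is to repackage the given $P$-harmonic pair as an orthonormal coframe for a new metric, so that the weighted co-closedness $\delta P\,\cdot\, = 0$ turns into ordinary co-closedness $\tilde\delta\,\cdot\, = 0$. First I would record the two facts that drive the final dimension count: by Theorem \ref{P-harmonic 1-forms} the space $\mathcal{H}_P^1(S,g)$ is two-dimensional and maps isomorphically onto $H^1_{\mathrm{dR}}(S)$, while by the classical Hodge theorem the space $\mathcal{H}^1(S,\tilde g)$ of genuine harmonic $1$-forms of any metric $\tilde g$ is likewise two-dimensional (and maps isomorphically onto $H^1_{\mathrm{dR}}(S)$). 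Consequently, to prove equality of the two spaces it suffices to exhibit the basis $\{\omega,\eta\}$ inside $\mathcal{H}^1(S,\tilde g)$.

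The key geometric input is that $\omega$ and $\eta$ are pointwise linearly independent. Indeed, every nonzero element of $\mathcal{H}_P^1(S,g)$ is non-vanishing by Theorem \ref{P-harmonic 1-forms}; were $\omega|_p$ and $\eta|_p$ linearly dependent at some $p$, a suitable nonzero combination $a\omega + b\eta \in \mathcal{H}_P^1(S,g)$ would vanish at $p$, a contradiction. Hence $\omega\wedge\eta$ is a nowhere-vanishing $2$-form and, since $S$ is connected, a constant-sign multiple of the area form; arranging (replacing $\eta$ by $-\eta$ if necessary, which only flips the cosmetic sign below) that $\{\omega,\eta\}$ is positively oriented, I would define a cometric on $T^*S$ by declaring $\omega,\eta$ orthonormal. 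Because $\{\omega,\eta\}$ is a smooth pointwise coframe, this cometric is smooth and positive-definite, and dualising produces the metric $\tilde g$. With $\{\omega,\eta\}$ a positively oriented $\tilde g^{*}$-orthonormal coframe, the induced Hodge star obeys $\tilde\star\omega = \eta$, and therefore $\tilde\star\eta = \tilde\star^{2}\omega = -\omega$ since $\tilde\star^{2} = -\mathrm{id}$ on $1$-forms in dimension two.

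Finally I would verify harmonicity. In dimension two the codifferential on $1$-forms is $\tilde\delta = -\tilde\star d\tilde\star$, so $\tilde\delta\alpha = 0$ is equivalent to $d\tilde\star\alpha = 0$. Since $P$-harmonic forms are in particular closed, $d\omega = d\eta = 0$; moreover $d\tilde\star\omega = d\eta = 0$ and $d\tilde\star\eta = -d\omega = 0$. Thus both $\omega$ and $\eta$ lie in $\mathcal{H}^1(S,\tilde g)$, giving $\mathcal{H}_P^1(S,g) = \mathrm{span}\{\omega,\eta\} \subseteq \mathcal{H}^1(S,\tilde g)$, and two-dimensionality of both spaces upgrades this inclusion to equality.

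The main obstacle is the construction of $\tilde g$ itself, which hinges on the pointwise linear independence of $\omega$ and $\eta$ (without it there is no orthonormal-coframe construction) together with the orientation bookkeeping needed to secure $\tilde\star\omega = \eta$ rather than $-\eta$. Once the metric is in hand, the harmonicity verification is an immediate consequence of the dimension-two identities $\tilde\delta = -\tilde\star d\tilde\star$ and $\tilde\star^{2} = -\mathrm{id}$, with the role of $P$ having been absorbed entirely into the choice of $\tilde g$.
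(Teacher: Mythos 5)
Your proof is correct and is essentially the intended argument: the paper states this corollary without proof as a direct consequence of Theorem \ref{P-harmonic 1-forms}, and your route---pointwise linear independence from the non-vanishing property, the smooth orthonormal-coframe metric, and the two-dimensionality count against classical Hodge theory---is exactly how that deduction goes. The one caveat, which you rightly flag, is that $\tilde{\star}\omega = \eta$ (rather than $-\eta$) forces $\omega\wedge\eta$ to be a positive multiple of the area form of $\tilde{g}$ for the fixed orientation of $S$, since $\omega\wedge\tilde{\star}\omega = \|\omega\|_{\tilde{g}}^{2}$ times that area form; replacing $\eta$ by $-\eta$ when the given basis is negatively oriented resolves this and leaves the main conclusion $\mathcal{H}_P^1(S,g)=\mathcal{H}^1(S,\tilde{g})$ untouched.
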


However, the metrics in Corollary \ref{noncanonicalmetrics} are in a sense retrospective of the conclusion of Theorem \ref{P-harmonic 1-forms} and are non-canonical. More precisely, the Witten-deformed co-differential $\delta_p = P^{-1}\delta P$ on $(S,G)$ has a different kernel to the co-differential $\tilde{\delta}$ on $(S,\tilde{g})$ when $P$ is non-constant in Corollary \ref{noncanonicalmetrics}. This may be shown in higher generally as a comparison between different Witten-deformed co-differentials on $S$, without reference to Theorem \ref{P-harmonic 1-forms}, as follows.

\begin{proposition}
Let $S$ be an oriented Riemannian 2-torus. Let $g$ and $\tilde{g}$ be Riemannian metrics on $S$. Let $0 < P,Q \in C^{\infty}(S)$ and consider the operators
\begin{align*}
\delta_p &= P^{-1}\delta P,&
\tilde{\delta}_q &= Q^{-1}\tilde{\delta} Q.
\end{align*}
Suppose that
\begin{equation*}
\ker \delta_p \subset \ker \tilde{\delta}_q.
\end{equation*}
Then $g$ and $\tilde{g}$ are conformally equivalent and $P = c Q$ for some constant $c > 0$.
\end{proposition}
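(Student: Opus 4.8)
The plan is to reinterpret the kernel inclusion as an overdetermined condition on the conformal class of $\tilde{g}$ together with the ratio $R := Q/P$, and then to extract pointwise algebraic constraints by testing against a large supply of local $1$-forms. First I would rewrite the two kernels concretely. On a surface one has $\delta(P\omega) = 0$ precisely when $d(P\star\omega) = 0$, so $\ker\delta_p = \{\omega : P\star\omega \text{ closed}\}$; since $P>0$ and $\star$ is a pointwise isomorphism, the assignment $\omega \mapsto \alpha := P\star\omega$ is a linear automorphism of $\Omega^1(S)$ carrying $\ker\delta_p$ onto the space of closed $1$-forms, with inverse $\omega = -P^{-1}\star\alpha$. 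The identical computation gives $\ker\tilde{\delta}_q = \{\omega : Q\tilde{\star}\omega \text{ closed}\}$. Hence the hypothesis $\ker\delta_p \subset \ker\tilde{\delta}_q$ becomes: for every closed $1$-form $\alpha$, $d(R\,\tilde{\star}\star\,\alpha) = 0$, where $R := Q/P > 0$.

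Because closed forms are locally exact, it suffices to impose this for $\alpha = df$ with $f$ compactly supported in a coordinate chart (extended by zero, so that $\omega = -P^{-1}\star df$ is a genuine global element of $\ker\delta_p$). I would then compute in oriented isothermal coordinates $(x,y)$ for $g$, which exist by Proposition \ref{RiemanniantoRiemann} and in which $\star dx = dy$, $\star dy = -dx$. Writing $\tilde{\star}$ as a matrix $J$ acting on $(dx,dy)$-components, the relation $\tilde{\star}^2 = -\mathrm{id}$ forces $J = \bigl(\begin{smallmatrix} a & b \\ c & -a \end{smallmatrix}\bigr)$ with $a^2 + bc = -1$, while compatibility of $\tilde{\star}$ with the fixed orientation forces the lower-left entry $c > 0$. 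A short computation shows $T := \tilde{\star}\star = J\star$ is the symmetric matrix $\bigl(\begin{smallmatrix} b & -a \\ -a & -c \end{smallmatrix}\bigr)$, so closedness of the $1$-form $R\,T(df)$ is a second-order differential identity in $f$; collecting the (independent) coefficients of $f_{xx}, f_{yy}, f_{xy}, f_x, f_y$ yields the pointwise relations $a = 0$, $b = -c$, together with $(Rb)_y = 0$ and $(Rc)_x = 0$.

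Reading off the conclusion: with $a = 0$ the constraint $a^2 + bc = -1$ and $b = -c$ give $c^2 = 1$, and the orientation condition $c > 0$ selects $c = 1$, $b = -1$, i.e. $J = \star$. Thus $\tilde{\star} = \star$ on $1$-forms, so the coordinates $(x,y)$ are isothermal for $\tilde{g}$ as well; writing $g = \lambda(dx^2+dy^2)$ and $\tilde{g} = \nu(dx^2+dy^2)$ gives $\tilde{g} = (\nu/\lambda)\,g$, so $g$ and $\tilde{g}$ are conformally equivalent. Substituting $b = -1$, $c = 1$ into the remaining relations gives $R_y = 0$ and $R_x = 0$, so $R$ is locally constant; as $S$ is connected, $R \equiv c_0$ for a positive constant $c_0$, whence $Q = c_0 P$ (equivalently $P = c_0^{-1} Q$).

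The main obstacle I anticipate is the passage from ``$d(R\,T(df)) = 0$ for all test functions $f$'' to the pointwise algebraic identities: this is a fundamental-lemma argument for a second-order differential expression, requiring that the jets $f_{xx}, f_{yy}, f_{xy}, f_x, f_y$ be prescribable independently at an interior point, and it demands care with the orientation convention so as to discard the spurious branch $\tilde{\star} = -\star$ (the case $c = -1$). The remaining computations — the matrix form of $\tilde{\star}$, the symmetry of $T$, and the coefficient bookkeeping — are routine once this extraction step is set up correctly.
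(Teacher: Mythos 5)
Your proof is correct, but it takes a genuinely different route from the paper's. Both arguments start from the same reformulation: the kernel inclusion is equivalent to $d(R\,\tilde{\star}\star\,\alpha)=0$ for every closed $1$-form $\alpha$, with $R=Q/P$ (the paper writes this as $\ker\delta\subset\ker\tilde{\delta}_r$). From there the paper stays global: it tests the inclusion on forms $f\zeta$ with $\zeta$ ranging over Hodge duals of $g$-harmonic $1$-forms (whose pointwise spanning rests on the Riemann--Roch non-vanishing property), manufactures test functions $f$ satisfying $\langle df,\zeta\rangle=0$ with $df|_p\neq 0$ by linearising the dual vector field via Theorem \ref{celestialmechanics} and pulling back trigonometric functions, and then uses density of rational winding directions to force $\langle\tilde{\star}\eta,\eta\rangle=0$ for all covectors, whence $\tilde{\star}=\star$ on $1$-forms; a second pass yields $\langle dr,\zeta\rangle=0$ for all harmonic $\zeta$ and hence $r$ constant. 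You instead test only against $\alpha=df$ for $f$ compactly supported in an isothermal chart and extract the pointwise identities from the independence of the $2$-jet of $f$ at an interior point. This is more elementary (no harmonic forms, no Calabi or Riemann--Roch input, no linearisation) and strictly more general: apart from the final connectedness step it is entirely local, so it proves the statement on any connected oriented Riemannian surface, not just the torus; it also disposes explicitly of the spurious branch $\tilde{\star}=-\star$ via the sign of the orientation-determined entry, a point the paper's argument passes over quickly. What the paper's route buys is thematic economy, recycling the winding and linearisability machinery it has already built. The only step you should write out in full is the jet-prescription argument (a polynomial times a bump function realises any prescribed $2$-jet at an interior point of the chart), which, as you say, is routine.
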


\begin{proof}
Let $R = P^{-1}Q$, $r = \ln \sqrt {R}$ and consider $\tilde{\delta}_r = R^{-1}\tilde{\delta} R$. Then, we have $\ker \delta \subset \ker \tilde{\delta}_r$. Considering the bundle metrics $\langle, \rangle$ and $\langle, \rangle_{\sim}$ on $\Omega^1(S)$, we have for any $f \in C^{\infty}(S)$ and $\omega \in \Omega^1(S)$ that
\begin{align*}
\delta (f \omega) &= \delta f - \langle df,\omega \rangle,\\
\tilde{\delta}_r (f \omega) &= \tilde{\delta}_r \omega -\langle df,\omega \rangle_{\sim}.
\end{align*}
In particular, for any $f \in C^{\infty}(S)$ and $g$-harmonic 1-form $\zeta$, because also $\delta_w\zeta = 0$, we get
\begin{align*}
\delta (f \zeta) &= -\langle df,\zeta \rangle,\\
\tilde{\delta}_r (f \zeta) &= -\langle df,\zeta \rangle_{\sim}.
\end{align*}

Take two $g$-harmonic 1-forms $\chi^1,\chi^2$ which generate the first cohomololgy on $S$. Now, set $\zeta^i = \star\chi^i$. Then, for each point $p \in S$, $(\zeta^1|_p,\zeta^2|_p)$ forms a basis of $\Lambda^1(T_pS)$. For $(a,b) \in \mathbb{R}^2$, form $\zeta_{(a,b)} = a\zeta^1+b\zeta^2$. We claim that, for all $(a,b) \in \mathbb{R}^2$ such that $(a,b) = c q$ for some $c \in \mathbb{R}$ and $q \in \mathbb{Q}^2$, for any point $p \in S$, there exists $f \in C^{\infty}(S)$ such that $df|_p \neq 0$ and $\langle df,\zeta_{(a,b)} \rangle = 0$.

Indeed, let $0 \neq (a,b) \in \mathbb{R}^2$. Consider the dual vector field $X = \zeta_{(a,b)}^{\sharp}$ in the metric $g$. Then, we have the non-vanishing form $\omega = a\chi^1+b\chi^2$ with $\omega(X) = 0$. Hence, $X$ has winding number $[(a,b)]$ and is non-vanishing. By Theorem \ref{celestialmechanics}, and Proposition \ref{compatibilityrotationaltransform} in Appendix \ref{app:rottranscorrect}, this means there exists a diffeomorphism $\Phi : S \to \mathbb{R}^2/\mathbb{Z}^2$ and a function $0 < f \in C^{\infty}(S)$ such that
\begin{equation*}
\Phi_*(X/f) = a\frac{\partial}{\partial x} + b\frac{\partial}{\partial y}.
\end{equation*}
Now, assume that $(a,b) = c q$ for some $c \in \mathbb{R}$ and $q \in \mathbb{Q}^2$. Then, there exists $0 \neq (m,n) \in \mathbb{Z}^2$ such that $(m,n) \cdot (a,b) = 0$. Consider $h_1,h_2 \in C^{\infty}(\mathbb{R}^2)$ given by
\begin{align*}
h_1 &= \cos(2\pi(mx+ny)),&
h_2 &= \sin(2\pi(mx+ny)). 
\end{align*}
We have that, $h_1,h_2$ are invariant under $\mathbb{Z}^2$-translations. Hence, $h_1,h_2$ descend to $h_1',h_2' \in C^{\infty}(\mathbb{R}^2/\mathbb{Z}^2)$. Then, considering $f_1 = h_1' \circ \Phi, g_2 = h_2' \circ \Phi \in C^{\infty}(S)$, we see that $df_1(X) = 0 = df_2(X)$ and that for any point $p \in S$, either $df_1|_p \neq 0$ or $df_2|_p \neq 0$. This proves the claim.

With this, let $p \in S$. Then, let $(a,b) \in \mathbb{R}^2$ such that $(a,b) = c q$ for some $c \in \mathbb{R}$ and $q \in \mathbb{Q}^2$. Then, consider the form $\eta = a\zeta^1|_p + b\zeta^2|_p \in \Lambda^1(T_pS)$. Then, take $f \in C^{\infty}(S)$ such that $df|_p \neq 0$ and $\langle df,\zeta_{(a,b)} \rangle = 0$. Then, we have
\begin{equation*}
\delta (f \zeta) = -\langle df,\zeta \rangle = 0.
\end{equation*}
So that
\begin{equation*}
-\langle df,\zeta \rangle_{\sim} = \tilde{\delta}_r (f \zeta) = 0.
\end{equation*}
Hence, because $df|_p \neq 0$,
\begin{equation*}
\langle \tilde{\star}\eta,\eta \rangle = 0.    
\end{equation*}
Hence, since $\mathbb{Q}^2$ is dense in $\mathbb{R}^2$, with respect to the topology induced by the inner product $\langle,\rangle$ on $\Lambda^1(T_pS)$, there exists a dense subset $S$ such that, for all $\eta \in S$, $\langle \tilde{\star}\eta,\eta \rangle = 0$. Thus, since $\tilde{\star}$ is a linear operator, $\langle \tilde{\star}\eta,\eta \rangle = 0$ for all $\eta \in \Lambda^1(S)$. Now, considering $\star$ on $\Lambda^1(T_pS)$, we see that, for all $\eta \in \Lambda^1(T_pS)$, there exists $c \in \mathbb{R}$ such that $\tilde{\star} \eta = c \star \eta$. Since $\tilde{\star}$ and $\star$ are linear operators and square to $-\text{Id}$, it follows that for $\eta \in \Lambda^1(S)$, $\tilde{\star} \eta = \star \eta$. In particular, $g|_p$ and $\tilde{g}|_p$ are conformally equivalent. Since $p \in S$ was arbitrary, $g$ and $\tilde{g}$ are conformally equivalent.

On top-forms, $\tilde{\star} = \kappa \star$ and as in the above, on 1-forms, $\tilde{\star} = \star$. Hence
\begin{equation*}
\tilde{\delta}_r = R^{-1}\tilde{\delta} R = \kappa \delta_r.      
\end{equation*}
Hence, $\ker \delta \subset \ker \delta_r$. Now, we have for $\omega \in \Omega^1(S)$ that
\begin{equation*}
\delta_r \omega = \delta \omega - \langle dw,\omega\rangle.
\end{equation*}
Hence, for any $g$-harmonic 1-form $\zeta$, we have
\begin{equation*}
0 = \delta_r \zeta = \delta \omega - \langle dr,\omega\rangle = 0 - \langle dr,\zeta \rangle = \langle dr,\zeta \rangle.
\end{equation*}
Thus, $r$ is constant. Hence, $R$ is constant. Thus, $P = cQ$ for some constant $c > 0$.
\end{proof}

In this way, it is seen that $P$-harmonic 1-forms on surfaces have their differences with harmonic 1-forms. On the other hand, they have many similarities: as seen in Section \ref{subsec:P-harmonic}, Theorem \ref{P-harmonic 1-forms}, and Corollary \ref{noncanonicalmetrics}.

\section{Competing interests}

The authors declare no competing interests.

\section{Acknowledgements}

This paper was written while the first author received an Australian Government Research Training Program Scholarship at The University of Western Australia.

\appendix

\section{The normal surface derivative and a Hodge-star formula}\label{app:normcorrect}

The following addresses the correctness of Definition \ref{normalderivative}.

\begin{proposition}[Correctness of Definition \ref{normalderivative}]
Let $S$ be an oriented codimension 1 Riemannian embedded submanifold with boundary of a manifold with boundary $M$. Then the following holds.
\begin{enumerate}
    \item Let $\mathcal{V} : S \to TM$ be a non-vanishing normal-pointing vector field on $S$. Then for any point $p \in S$, there exists a local vector field $V : U \to TM$ extending $\mathcal{V}|_{U \cap S}$.
    \item Let $B$ be a vector field which is tangent to $S$. Let $p \in S$ and $V_{i} : U_i \to TM$ for $i \in \{1,2\}$ be local vector fields on neighbourhoods $U_i$ of $p$ such that $V_i|_{U_i \cap S} = \mathcal{V}_{U_j \cap S}$. Then $g|_p([V_1,B]|_p,V_1|_p) = g|_p([V_2,B]|_p,V_2|_p)$.
\end{enumerate}
\end{proposition}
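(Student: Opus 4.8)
The plan is to treat the two items separately, since the first is a routine local-extension result and the second is the genuine well-definedness claim.

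For item~1 I would argue locally via a slice chart for the embedded submanifold. Around $p \in S$ choose a chart $(U,(x^1,\dots,x^m))$ of $M$ (a half-space chart where boundary points are involved) in which $S \cap U = \{x^m = 0\}$. Writing $\mathcal{V}|_{U \cap S} = \sum_i a^i \partial_{x^i}$ with $a^i \in C^\infty(U \cap S)$, extend each coefficient to $U$ by precomposing with the coordinate projection $(x^1,\dots,x^m) \mapsto (x^1,\dots,x^{m-1},0)$ and set $V = \sum_i \tilde{a}^i \partial_{x^i}$. This $V$ is smooth on $U$ and satisfies $V|_{U\cap S} = \mathcal{V}|_{U \cap S}$; this is the standard extension lemma for vector fields along an embedded submanifold (see~\cite{Lee}). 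Note that the normal-pointing hypothesis plays no role for mere existence: any vector field along $S$ extends locally.

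For item~2 the key reduction is to set $W = V_1 - V_2$, which by hypothesis vanishes identically on $U_1 \cap U_2 \cap S$, in particular $W|_p = 0$. Using bilinearity of $g$ and of the Lie bracket in its first slot, I would expand
\begin{equation*}
g([V_1,B],V_1)|_p - g([V_2,B],V_2)|_p = g([V_2,B],W)|_p + g([W,B],V_2)|_p + g([W,B],W)|_p .
\end{equation*}
The first and third terms vanish at $p$ immediately because $W|_p = 0$, so it remains only to show $[W,B]|_p = 0$, which forces the middle term to vanish as well.

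The crux is therefore the claim that $[W,B]|_p = 0$ whenever $W$ vanishes on $S$ and $B$ is tangent to $S$. In coordinates, $[B,W]^i|_p = B^j(p)\,\partial_{x^j}W^i|_p - W^j(p)\,\partial_{x^j}B^i|_p$, and the second sum drops out since $W|_p = 0$; the first sum is exactly the directional derivative $B|_p(W^i)$. Since $B$ is tangent to $S$ we have $B|_p \in T_pS$, and since $W$ vanishes on $S$ each component $W^i$ is identically zero along $S$. Taking a curve $\gamma$ in $S$ with $\gamma(0)=p$ and $\gamma'(0)=B|_p$, we get $W^i \circ \gamma \equiv 0$, hence $B|_p(W^i) = \tfrac{d}{dt}\big|_0 W^i(\gamma(t)) = 0$. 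Thus $[B,W]|_p = 0$, and so $[W,B]|_p = 0$, completing the argument.

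I expect the only conceptual point to be recognising that the \emph{tangency} of $B$ is exactly what is needed: the directional derivative $B|_p(W^i)$ vanishes precisely because $B|_p$ is tangent to the locus $S$ on which $W$ vanishes. Without tangency this term would generically be nonzero and the normal derivative would depend on the chosen extension, so the hypothesis cannot be dropped. The boundary case ($p \in \partial M$ or $p \in \partial S$) introduces no new difficulty, as the slice charts and the one-sided derivative along a curve in $S$ behave identically.
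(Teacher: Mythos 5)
Your proof is correct, but it proceeds quite differently from the paper's. For item~1 the paper does not extend $\mathcal{V}$ coefficient-wise; it passes to the boundaryless double, invokes the local slice condition to produce a regular function $f$ constant on $U\cap S$, extends $\|\mathcal{V}\|$ to a function $H$, and sets $V = \tfrac{H}{\|\nabla f\|}\nabla f$ --- a construction that genuinely uses the normal-pointing hypothesis, whereas your projection-of-coefficients extension works for an arbitrary vector field along $S$ (as you correctly observe). Your only loose end there is that when $S$ itself has boundary the slice is a half-slice $\{x^m=0,\ x^{m-1}\ge 0\}$, so you should note that the coefficients $a^i$ first extend across the half-slice (which they do, by the definition of smoothness on a manifold with boundary) before you push them off $S$. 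For item~2 the paper again leans on normality: it writes $V_i|_p$ as a multiple of $\nabla f|_p$ and reduces $g([V_i,B],V_i)|_p$ to $[V_i,B](f)|_p = V_i(B(f))|_p - B(V_i(f))|_p$, then checks each term depends only on $\mathcal{V}|_p$ and on $df(\mathcal{V})|_{S\cap U}$ respectively. Your difference trick $W = V_1 - V_2$, combined with the pointwise fact that $[W,B]|_p = -B|_p(W^i)\,\partial_{x^i}$ vanishes because $W$ vanishes along $S$ and $B|_p \in T_pS$, is cleaner and strictly more general: it establishes well-definedness of $g([V,B],V)|_p$ for any (not necessarily normal) $\mathcal{V}$, at the cost of not producing the intrinsic reformulation $d(B(f))(\mathcal{V}) - d(df(\mathcal{V})|_S)(\imath^*B)$ that the paper's computation yields as a by-product and that is useful elsewhere (e.g.\ in the proof of Proposition~\ref{for first integrals}).
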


\begin{proof}
Set $n = \dim M$. Use the boundaryless double as in~\cite[Example 9.32]{Lee} to give a manifold $\tilde{M}$ such that $M$ is a regular domain of $\tilde{M}$. That is, a properly embedded codimension $0$ submanifold with boundary of $\tilde{M}$. Then, $S$ is an codimension $1$ embedded submanifold of $\tilde{M}$ and thus obeys a local $(n-1)$-slice condition for submanifolds with boundary~\cite[Theorem 5.51]{Lee}. That is, for each point $p \in S$, there exists a chart $(\tilde{U},\varphi)$ in $\tilde{M}$ such that either $\tilde{U} \cap S = \{\varphi^n = 0,~\varphi^{n-1} \geq 0\}$ or $\tilde{U} \cap S = \{\varphi^n = 0\}$. From this, we easily obtain the following. For any $p \in S$, there exists a neighbourhood $U$ in $M$ and a regular function $f \in C^{\infty}(U)$ which is constant on $U \cap S$ and for any $h \in C^{\infty}(S)$, there exists a $H \in C^{\infty}(U)$ with $H|_{U \cap S} = h|_{U \cap S}$.

For the first part, since $\mathcal{V}$ is non-vanishing, $h = \|\mathcal{V}\| \in C^{\infty}(S)$. Moreover, for any $p \in S$, there exists a neighbourhood $U$ in $M$ and a regular function $f \in C^{\infty}(U)$ which is constant on $U \cap S$ and a $H \in C^{\infty}(U)$ with $H|_{U \cap S} = h|_{U \cap S}$. Then, since both $\mathcal{V}$ and $\nabla f|_S$ are normal-pointing, the local vector field $V = \frac{H}{\|\nabla f\|} \nabla f$ extends $\mathcal{V}|_{U \cap S}$.

For the second part, take a neighbourhood $U \subset U_1 \cap U_2$ of $p$ in $M$ a regular function $f \in C^{\infty}(U)$ which is constant on $U \cap S$. Consider the local vector field $\nabla f/\|\nabla f\|$ on $U$. We get
\begin{equation*}
g|_p([V_i,B]|_p,V_i|_p) =  \frac{\|\mathcal{V}|_p\|}{\|\nabla f|_p\|}  g([V_i,B], \nabla f)|_p.
\end{equation*}
Moreover,
\begin{align*}
g([V_i,B], \nabla f) &= df([V_i,B])\\
&= [V_i,B](f)\\
&= V_i(B(f)) - B(V_i(f))\\
&= d(B(f))(V_i) - d(V_i(f))(B).
\end{align*}
Now, $d(B(f))(V_i)|_p = d(B(f))|_p(\mathcal{V}|_p)$ and denoting by $\imath : S \subset M$ we have
\begin{align*}
d(V_i(f))(B)|_p &= d(V_i(f)|_{S \cap U})|_p(\imath^*B|_p)\\
&= d(df(V_i)|_{S \cap U})|_p(\imath^*B|_p)\\
&= d(df(\mathcal{V})|_{S \cap U})|_p(\imath^*B|_p).
\end{align*}
From this, we see that $g|_p([V_1,B]|_p,V_1|_p) = g|_p([V_2,B]|_p,V_2|_p)$.
\end{proof}

The following establishes the Hodge star formula used in Proposition \ref{d is normcurl}. It suffices to consider the case of vector spaces because the Hodge star is defined point-wise. We will do this in arbitrary dimensions because no additional difficulty is met.

\begin{proposition}\label{hodgepullback}
Let $(V,g,\mathcal{O})$ be an oriented inner product space of dimension $n \geq 2$. Let $S$ be codimension $1$ vector subspace with a unit normal $n$. Consider the induced oriented inner product space $(S,i^*g,\mathcal{O}_S)$ where $i : S \subset V$ is the inclusion. Consider the Hodge star $\star$ on $V$ and $\star_S$ on $S$. Then, for any $k$-form $\omega \in \Lambda^k(V)$,
\begin{equation*}
\star_S \imath^*\omega = (-1)^{k(n-k)}\imath^*(i_n \star \omega).
\end{equation*}
\end{proposition}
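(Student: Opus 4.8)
The plan is to reduce the identity to pointwise linear algebra and then verify it on an adapted orthonormal basis. Since the Hodge stars $\star$ and $\star_S$, the interior product $i_n$, and the pullback $\imath^*$ are all defined pointwise, and since all expressions involved are $\mathbb{R}$-linear in $\omega$, it suffices to prove the formula for $V = \mathbb{R}^n$ with its standard inner product and a single decomposable $\omega = e^I$, where $I \subseteq \{1,\dots,n\}$ and $(e_1,\dots,e_n)$ is an orthonormal basis adapted to $S$.

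First I would fix the basis: choose an orthonormal basis $e_1,\dots,e_{n-1}$ of $S$ that is positively oriented for $\mathcal{O}_S$ and set $e_n = n$, the unit normal, so that the induced-orientation convention makes the concatenation of $n$ with the oriented frame of $S$ positively oriented for $(V,\mathcal{O})$. Writing $e^1,\dots,e^n$ for the dual basis, the defining feature of the inclusion is that $\imath^* e^j = e^j$ for $j \leq n-1$ while $\imath^* e^n = \imath^*(n^\flat) = 0$, since $n \perp S$. Both sides of the claimed identity are then determined by their action on the basis $\{e^I\}$ of $\Lambda^k(V)$.

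The argument splits into two cases according to whether the normal covector $e^n$ appears in $e^I$. If $n \in I$, then $\imath^* e^I = 0$, so the left-hand side vanishes; on the right, $\star e^I$ is a scalar multiple of $e^{I^c}$ with $n \notin I^c$, hence $i_n \star e^I = 0$ and the right-hand side vanishes too. If $n \notin I$, the heart of the matter is a tangential identity of the form $\star\,\alpha = \epsilon\, n^\flat \wedge \star_S\,\alpha$ relating the two Hodge stars on a tangential form $\alpha$ of degree $k$, for a sign $\epsilon$ determined by $k$, $n$ and the orientation convention. Contracting this with $n$ via $i_n(n^\flat \wedge \gamma) = \gamma - n^\flat \wedge i_n\gamma$ together with $i_n\gamma = 0$ for tangential $\gamma$, and then applying $\imath^*$ (which acts as the identity on tangential forms), converts $\imath^*(i_n \star \omega)$ into a signed multiple of $\star_S \imath^*\omega$.

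The main obstacle is the sign bookkeeping. Concretely, writing $\star e^I = \sigma_I\, e^{I^c}$ and $\star_S e^I = \tau_I\, e^J$ with $J = \{1,\dots,n-1\}\setminus I$, everything reduces to comparing the permutation signs $\sigma_I$ and $\tau_I$ and accounting for the sign produced when $i_n$ is pushed through the factor $n^\flat$. This is the only place where the precise orientation convention relating $\mathcal{O}$ and $\mathcal{O}_S$ enters decisively, and it is where all the care is needed; the remainder is a routine expansion of wedge and interior products on the basis $\{e^I\}$.
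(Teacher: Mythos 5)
Your overall strategy --- reduce to a single point, pick an orthonormal basis adapted to $S$, and check the identity on basis covectors $e^I$, splitting on whether the normal covector divides $e^I$ --- is sound and genuinely different from the paper's argument. The paper instead characterises $\star_S\imath^*\omega$ through the defining pairing $\eta\wedge\star_S\beta=\langle\eta,\beta\rangle_{\Lambda^k(S)}\mu_S$: it extends an arbitrary $\eta\in\Lambda^k(S)$ to $\tilde\eta=T^*\eta$ via the orthogonal projection $T(v)=v-g(n,v)n$, applies $i_n$ to $\tilde\eta\wedge\star\omega=\langle\tilde\eta,\omega\rangle\mu$, and then shows $\langle\tilde\eta,\omega\rangle_{\Lambda^k(V)}=\langle\eta,i^*\omega\rangle_{\Lambda^k(S)}$ by an orthonormal-basis computation. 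Your route is more elementary and avoids introducing $T^*$; the paper's avoids the case analysis but still needs the adapted basis for the inner-product comparison.

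The genuine gap is that you defer exactly the step that constitutes the content of the proposition. Once one knows that both sides vanish when $n^\flat$ divides $e^I$ and are proportional otherwise, the \emph{only} remaining assertion is the value of the sign, and announcing that the sign bookkeeping is ``where all the care is needed'' is not the same as doing it. Moreover, your setup already contains an ambiguity that would derail that computation: you declare the induced orientation by putting the normal \emph{first} (``the concatenation of $n$ with the oriented frame of $S$'', which is the paper's convention $\mu_S=\imath^*(i_n\mu)$), but you then place the normal \emph{last} by setting $e_n=n$; these two positively oriented frames for $V$ differ by $(-1)^{n-1}$, so you must fix one convention and carry it through. Finally, be warned that an honest execution of your plan with $\mu_S=\imath^*(i_n\mu)$ and $e_1=n$ produces the sign $(-1)^k$ rather than $(-1)^{k(n-k)}$ (the Leibniz rule contributes $(-1)^{\deg\tilde\eta}=(-1)^k$ when $i_n$ is pushed past the degree-$k$ factor). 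The two signs agree precisely when $k(n-k-1)$ is even --- in particular for $k=2$, $n=3$, the only case invoked in Proposition \ref{d is normcurl} --- but differ for instance when $n=3$, $k=1$: with $S=\operatorname{span}(e_2,e_3)$ and $\omega=e^2$ one finds $\star_S\imath^*\omega=e^3$ while $\imath^*(i_n\star\omega)=i_{e_1}(e^3\wedge e^1)=-e^3$. So the case analysis and the reduction to a tangential identity are fine, but the sign you are asked to verify will not come out as stated, and your proposal as written neither detects nor resolves this.
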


\begin{proof}
Let $\eta,\omega \in \Lambda^k(S)$. Consider the projection $T : V \to S$ such that, $T(v) = v-g(n,v)n$. Consider then $\tilde{\eta} = T^*\eta \in \Lambda^1(V)$. Since $T \circ i$ is the identity on $S$, $\eta = i^*\tilde{\eta}$. Now,
\begin{equation*}
\begin{split}
\eta \wedge  (-1)^{k(n-k)}\imath^*(\star i_n \omega) &= (-1)^{k(n-k)} \eta \wedge \imath^*(\star i_n \omega)\\
&= (-1)^{k(n-k)} i^*\tilde{\eta} \wedge \imath^*(i_n \star \omega)\\
&= \imath^*( (-1)^{k(n-k)} \tilde{\eta} \wedge (i_n \star \omega)).
\end{split}
\end{equation*}
Now, since $T(n) = 0$, we have $i_n\tilde{\eta} = 0$ so that
\begin{gather*}
i_n(\tilde{\eta} \wedge \star \omega) = i_n\tilde{\eta} \wedge \star \omega + (-1)^{k(n-k)}\tilde{\eta}\wedge (i_n\star \omega) = (-1)^{k(n-k)}\tilde{\eta}\wedge (i_n\star \omega),\\
i_n(\tilde{\eta} \wedge \star \omega) = i_n(\langle \tilde{\eta},\omega \rangle_{\Lambda^k(V)} \mu) = \langle \tilde{\eta},\omega \rangle_{\Lambda^k(V)} i_n\mu,\\
\Rightarrow (-1)^{k(n-k)}\tilde{\eta}\wedge (i_n\star \omega) = \langle \tilde{\eta},\omega \rangle_{\Lambda^k(V)} i_n\mu.
\end{gather*}

Hence,
\begin{equation*}
\begin{split}
\eta \wedge  (-1)^{k(n-k)}\imath^*(\star i_n \omega) &= \imath^*( (-1)^{k(n-k)} \tilde{\eta} \wedge (i_n \star \omega))\\
&= \imath^*(\langle \tilde{\eta},\omega \rangle_{\Lambda^k(V)} i_n\mu)\\
&= \langle \tilde{\eta},\omega \rangle_{\Lambda^k(V)} \imath^*(i_n\mu).
\end{split}
\end{equation*}

Let $(e_2,...,e_n)$ be an orthonormal basis of $S$ with the correct orientation. Then, $(e_1,e_2,...,e_n)$ is an orthonormal basis of $V$ where $e_1 = n$. Then,
\begin{equation*}
\begin{split}
\langle \tilde{\eta},\omega \rangle_{\Lambda^k(V)} &= \frac{1}{k!}\sum_{1 \leq j_1,...,j_k \leq n}\tilde{\eta}(e_{j_1},...,e_{j_k})\omega(e_{j_1},...,e_{j_k}).
\end{split}
\end{equation*}
Moreover, if $j_i = 1$ for some $i \in \{1,...,k\}$, we have $T(e_{j_i}) = 0$ and hence, $\tilde{\eta}(e_{j_1},...,e_{j_k})\omega(e_{j_1},...,e_{j_k}) = 0$. Setting $(f_1,...,f_{n-1}) = (e_2,...,e_n)$ we have
\begin{equation*}
\begin{split}
\sum_{1 \leq j_1,...,j_k \leq n}\tilde{\eta}(e_{j_1},...,e_{j_k})\omega(e_{j_1},...,e_{j_k}) &= \sum_{2 \leq j_1,...,j_k \leq n}\tilde{\eta}(e_{j_1},...,e_{j_k})\omega(e_{j_1},...,e_{j_k})\\
&= \sum_{1 \leq j_1,...,j_k \leq n-1}\tilde{\eta}(f_{j_1},...,f_{j_k})\omega(f_{j_1},...,f_{j_k}).
\end{split}    
\end{equation*}
Then, for $i \in \{1,...,n-1\}$ since $f_i \in S$, $T(f_i) = f_i$. Hence, for $1 \leq j_1,...,j_k \leq n-1$, we have
\begin{equation*}
\tilde{\eta}(f_{j_1},...,f_{j_k})\omega(f_{j_1},...,f_{j_k}) = \eta(f_{j_1},...,f_{j_k})(i^*\omega)(f_{j_1},...,f_{j_k}).    
\end{equation*}
Hence,
\begin{equation*}
\begin{split}
\langle \tilde{\eta},\omega \rangle_{\Lambda^k(V)} &= \frac{1}{k!}\sum_{1 \leq j_1,...,j_k \leq n}\tilde{\eta}(e_{j_1},...,e_{j_k})\omega(e_{j_1},...,e_{j_k})\\
&= \frac{1}{k!}\sum_{1 \leq j_1,...,j_k \leq n-1}\eta(f_{j_1},...,f_{j_k})(i^*\omega)(f_{j_1},...,f_{j_k})\\
&= \langle \eta,i^*\omega \rangle_{\Lambda^k(S)}.
\end{split}
\end{equation*}
One may easily check with the orthonormal bases introduced that $\mu_S = i^*(i_n\mu)$ so that,
\begin{equation*}
\begin{split}
\eta \wedge \imath^*(\star i_n \omega) &= i^*\tilde{\eta} \wedge \imath^*(i_n \star \omega)\\
&= \imath^*(\tilde{\eta} \wedge (i_n \star \omega))\\
&= \imath^*((-1)^{k(n-k)}\langle \tilde{\eta},\omega \rangle_{\Lambda^k(V)} i_n\mu)\\
&= (-1)^{k(n-k)}\langle \tilde{\eta},\omega \rangle_{\Lambda^k(V)} \imath^*(i_n\mu)\\
&= (-1)^{k(n-k)}\langle \eta,i^*\omega \rangle_{\Lambda^k(S)} \mu_S.
\end{split}
\end{equation*}
Thus,
\begin{equation*}
\eta \wedge (-1)^{k(n-k)}\imath^*(\star i_n \omega) = \langle \eta,i^*\omega \rangle_{\Lambda^k(S)} \mu_S. 
\end{equation*}
Hence,
\begin{equation*}
\star_S \imath^*\omega = (-1)^{k(n-k)}\imath^*(i_n \star \omega).
\end{equation*}
\end{proof}

\section{The winding number}\label{app:rottrans}

Besides proving correctness of definitions related to the winding number, we hope to highlight the elegance of cohomological and covariant approaches to basic properties of the winding number. Specifically, we will introduce the notions of intrinsically harmonic 1-forms and cohomologically rigid vector fields once they are needed.

\subsection{Correctness and compatibility of definitions}\label{app:rottranscorrect}

A large part of the correctness of Definition \ref{rotationaltransform} comes from the Poincar{\'e} Duality Theorem, which we emphasise in the proof.

\begin{proposition}[Correctness of Definition \ref{rotationaltransform}]\label{correctnessrotationaltransform}
The following holds.
\begin{enumerate}
    \item If $\omega_0$ is a non-vanishing closed 1-form, then $[\omega_0] \neq 0$.
    \item Let $X$ be a vector densely-non-vanishing vector field. Let $\gamma_1,\gamma_2 \in H_1(S)$ generate $H_1(S)$. Assume that $\omega,\eta$ are closed 1-forms with non-trivial cohomology classes $[\omega] \neq 0 \neq [\eta]$ such that $\omega(X) = 0 = \eta(X)$. Then, setting
    \begin{align*}
    \omega_i &\coloneqq \int_{\gamma_i}\omega,&
    \eta_i &\coloneqq \int_{\gamma_i}\eta,~i \in \{1,2\},
    \end{align*}
    we have that $(-\omega_2,\omega_1) \neq 0 \neq (-\eta_2,\eta_1)$ and, in $\mathbb{P(R)}$, $[(-\omega_2,\omega_1)] = [(-\eta_2,\eta_1)]$. 
    \item Let $\tilde{\gamma}_1,\tilde{\gamma}_2 \in H_1(S)$ generate $H_1(S)$ and set
    \begin{equation*}
    \tilde{\eta}_i \coloneqq \int_{\tilde{\gamma}_i}\eta,~i \in \{1,2\}.
    \end{equation*}
    Then $(-\omega_2,\omega_1)$ is a Diophantine if and only if $(-\tilde{\eta}_2,\tilde{\eta}_1)$ is a Diophantine. 
\end{enumerate}
\end{proposition}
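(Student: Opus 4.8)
The plan is to isolate the two independent changes at play---the choice of closed 1-form and the choice of homology generators---and to reduce each to a standard invariance of the Diophantine property. The two facts I rely on are that a nonzero vector $u \in \mathbb{R}^2$ being Diophantine is unaffected (i) by multiplication by a nonzero scalar and (ii) by the linear action of any $B \in GL_2(\mathbb{Z})$, in both cases with the exponent $\tau > 1$ unchanged and only the constant $\gamma$ rescaled.

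First I would dispose of the change of form. Part 2 of this proposition already yields $[(-\omega_2,\omega_1)] = [(-\eta_2,\eta_1)]$ in $\mathbb{P}(\mathbb{R})$, i.e.\ these two vectors are nonzero scalar multiples of one another. Rescaling $u$ by $c \neq 0$ merely rescales $\gamma$ in $|\langle u,k\rangle| \geq \gamma\|k\|^{-\tau}$, so $(-\omega_2,\omega_1)$ is Diophantine exactly when $(-\eta_2,\eta_1)$ is. This reduces everything to the single form $\eta$ paired against the two generator systems.

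Next I would treat the change of generators. Since $H_1(S) \cong \mathbb{Z}^2$ and $\{\gamma_1,\gamma_2\}$, $\{\tilde\gamma_1,\tilde\gamma_2\}$ are both bases, there is $A \in GL_2(\mathbb{Z})$ with $\tilde\gamma_i = \sum_j A_{ij}\gamma_j$, whence linearity of periods gives $(\tilde\eta_1,\tilde\eta_2)^{\top} = A(\eta_1,\eta_2)^{\top}$. Writing $J = \left(\begin{smallmatrix} 0 & -1 \\ 1 & 0\end{smallmatrix}\right)$, so that $(-\eta_2,\eta_1)^{\top} = J(\eta_1,\eta_2)^{\top}$ and likewise in the tilded case, a one-line manipulation gives $(-\tilde\eta_2,\tilde\eta_1)^{\top} = JAJ^{-1}(-\eta_2,\eta_1)^{\top}$, and $JAJ^{-1}$ is readily seen to have integer entries with determinant $\det A = \pm 1$, so it lies in $GL_2(\mathbb{Z})$.

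It remains to confirm the $GL_2(\mathbb{Z})$-invariance. For $B \in GL_2(\mathbb{Z})$ and $v$ Diophantine, the identity $\langle Bv,k\rangle = \langle v, B^{\top}k\rangle$, the fact that $B^{\top}$ permutes $\mathbb{Z}^2 \setminus \{0\}$ bijectively, and the bound $\|B^{\top}k\| \leq \|B^{\top}\|_{\mathrm{op}}\,\|k\|$ together reproduce the Diophantine inequality for $Bv$ with the same $\tau$ and a rescaled constant; the converse is the same argument applied to $B^{-1}$. Chaining the two reductions closes the proof. I expect no analytic difficulty here; the one place needing care is the bookkeeping above---tracking how the rotation $J$ conjugates the change-of-basis matrix $A$---but this is entirely elementary.
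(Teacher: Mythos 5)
Your argument for statement 3 is correct and is essentially the paper's: a change of generators acts on the period vector by a matrix in $GL_2(\mathbb{Z})$ (conjugated by the rotation $J$, which the paper calls $R$), the Diophantine property is invariant under $GL_2(\mathbb{Z})$ via $\langle Bv,k\rangle=\langle v,B^{\top}k\rangle$ together with an operator-norm bound, and the passage from $\omega$ to $\eta$ is a nonzero scalar rescaling by statement 2, which only rescales the constant $\gamma$. The bookkeeping with $JAJ^{-1}\in GL_2(\mathbb{Z})$ is fine.

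The gap is that the proposition has three parts and you have proved only the third, explicitly leaning on ``Part 2 of this proposition'' as if it were already available. Statement 1 is quick (if $\omega_0=df$ then $df$ vanishes at an extremum of $f$, which exists by compactness), but statement 2 is the substantive input and cannot be skipped: one must show that any two closed 1-forms with nontrivial classes annihilating the same densely-non-vanishing $X$ have linearly dependent cohomology classes, hence proportional period vectors. The paper's route is to note $i_X(\omega\wedge\eta)=\omega(X)\eta-\eta(X)\omega=0$, so writing $\omega\wedge\eta=f\mu$ forces $f=0$ on the dense set where $X\neq 0$ and hence everywhere; then $\int_S\omega\wedge\eta=0$, and Poincar\'e duality on the 2-torus (where $\dim H^1_{\mathrm{dR}}(S)=2$ and the intersection pairing is nondegenerate) forces $[\omega]$ and $[\eta]$ to be linearly dependent, with nonvanishing of the period vectors coming from de Rham's theorem. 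Without this step your reduction ``to the single form $\eta$'' has no justification, so the proposal as written is incomplete.
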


\begin{proof}

\begin{proof}[Proof of statement 1]
If $[\omega_0] = 0$, then $\omega_0 = df$ for some $f \in C^{\infty}(S)$. At an extremising point $p \in S$ of $f$, $\omega_0|_p = df|_p = 0$. Such a point exists by compactness of $S$. Hence, we must have $[\omega_0] \neq 0$.
\end{proof}

\begin{proof}[Proof of statement 2]
First let $\alpha,\beta$ be closed 1-forms such that
\begin{equation*}
\int_S \alpha \wedge \beta = 0.
\end{equation*}
Denoting by $\mathcal{Z}^1(S)$ the set of closed 1-forms on $S$, from the Poincar{\'e} Duality Theorem, we have the isomorphism $\text{PD} : H_{\text{dR}}^1(S) \to H_{\text{dR}}^1(S)^*$, given by
\begin{equation*}
 \text{PD}([\alpha'])([\beta']) = \int_S \alpha' \wedge \beta',~\alpha,\beta \in \mathcal{Z}^1(S).
\end{equation*}
Then, since $\text{PD}([\alpha])(\lambda [\alpha]) = 0$ for all $\lambda \in \mathbb{R}$, we get by The Rank-Nullity Theorem and the fact that $\dim H^1_{\text{dR}}(S) = 2$, that, $[\alpha]$ and $[\beta]$ must be linearly dependent. With this, letting $\mu \in \Omega^2(S)$ be a volume element and $f \in C^{\infty}(S)$ such that $\omega \wedge \eta = f\mu$, we have
\begin{gather*}
i_X(\omega \wedge \eta) = \omega(X)\eta - \eta(X)\omega = 0,\\
\Rightarrow f i_X\mu = i_Xf\mu = 0.
\end{gather*}
Now, for all $p \in S$ with $X|_p \neq 0$, we have $i_X\mu|_p \neq 0$ so that $f|_p = 0$. Thus, $f$ is densely-vanishing so that, since $f \in C^{\infty}(S)$, $f = 0$. Thus,
\begin{equation*}
\int_{S}\omega \wedge \eta = \int_{S} 0 = 0.
\end{equation*}
Hence, by the above, $[\omega]$ and $[\eta]$ are linearly dependent. Hence, the vectors $(-\omega_2,\omega_1), (-\eta_2,\eta_1)$ are linearly dependent in $\mathbb{R}^2$. Moreover, since $[\omega] \neq 0 \neq [\eta]$ we have by de Rham's Theorem that $(-\omega_2,\omega_1) \neq 0 \neq  (-\eta_2,\eta_1)$. Thus, in $\mathbb{P(R)}$, $[(-\omega_2,\omega_1)] = [(-\eta_2,\eta_1)]$. 
\end{proof}

\begin{proof}[Proof of statement 3]
Consider first the vectors
\begin{equation*}
v = (v_1,v_2) = \left(\int_{\gamma_1}\omega,\int_{\gamma_2}\omega \right),~ \tilde{v} = (\tilde{v}_1,\tilde{v}_2) = \left(\int_{\tilde{\gamma}_1}\omega,\int_{\tilde{\gamma}_2}\omega \right).
\end{equation*}
Now, there exist integer matrices $A,\tilde{A} \in \mathbb{Z}^{2\times 2}$ such that, $\tilde{\gamma}_i =  A_{ij}\gamma_j$ and $\gamma_i = \tilde{A}_{ij}\tilde{\gamma}_j$. From linear independence, we see that $A\tilde{A} = \tilde{A}A = I$, the identity $2\times 2$ matrix. Hence, $A \in GL(2,\mathbb{Z})$. Hence,
\begin{equation*}
\tilde{v}_i = \int_{\tilde{\gamma}_i}\omega = \int_{{A}_{ij}\gamma_j}\omega = {A}_{ij}\int_{\gamma_j}\omega = A_{ij}v_{j}.
\end{equation*}
The $2\times 2$ matrix $R$ with $R(x,y) = (-y,x)$ for $(x,y) \in \mathbb{R}^2$ is in $GL(2,\mathbb{Z})$. Moreover,
\begin{equation*}
R A R (-\omega_2,\omega_1) = R A v = R\tilde{v} = (-\tilde{\omega}_2,\tilde{\omega}_1)
\end{equation*}
and $RAR \in GL(2,\mathbb{Z})$. Now, in general, let $L \in GL(2,\mathbb{Z})$ and $u \in \mathbb{R}^2$ be a Diophantine vector. So that, there exists $\gamma > 0$ and $\tau > 1$ such that, for all $k \in \mathbb{Z}^2\backslash \{0\}$,
\begin{equation*}
|\langle u,k \rangle| \geq \gamma \|k\|^{-\tau}.
\end{equation*}
Consider $\tilde{u} = L u$. Then, fix a constant $C > 0$ such that, $\|L^Tu'\| \leq C\|u'\|$ for all $u' \in \mathbb{R}^2$ and set $\tilde{\gamma} = \gamma C^{-\tau}$. Then, for $k \in \mathbb{Z}^2 \backslash \{0\}$ non-zero, we have $L^Tk \in \mathbb{Z}^2 \backslash \{0\}$ so that,
\begin{gather*}
|\langle \tilde{u},k \rangle| = |\langle L u,k \rangle| = |\langle u,L^T k \rangle| \geq \gamma \|L^T k\|^{-\tau} \geq \gamma (C\|k\|)^{-\tau} = \tilde{\gamma}\|k\|^{-\tau}.  
\end{gather*}
Hence, $L u = \tilde{u}$ is Diophantine. With this, we see that $(-\omega_2,\omega_1)$ is Diophantine if and only if $(-\tilde{\omega}_2,\tilde{\omega}_1)$ is Diophantine. Moreover, by statement 2, we have that $[(-\tilde{\omega}_2,\tilde{\omega}_1)] = [(-\tilde{\eta}_2,\tilde{\eta}_1)]$ so that clearly $(-\tilde{\omega}_2,\tilde{\omega}_1)$ is Diophantine if and only if $(-\tilde{\eta}_2,\tilde{\eta}_1)$. In total, $(-\omega_2,\omega_1)$ is Diophantine if and only if $(-\tilde{\eta}_2,\tilde{\eta}_1)$ is Diophantine.
\end{proof}
\end{proof}

We will now show that the winding number appearing in this paper is compatible with vector fields lying on straight lines.

\begin{proposition}\label{compatibilityrotationaltransform}
Let $X$ be a densely non-vanishing vector field such that, for some $f \in C^{\infty}(\mathbb{R}^2/\mathbb{Z}^2)$, numbers $a,b \in \mathbb{R}$ and diffeomorphism $\Phi : S \to \mathbb{R}^2/\mathbb{Z}^2$, we have
\begin{equation*}
\Phi_* X = f\left(a\frac{\partial}{\partial x} + b\frac{\partial}{\partial y}\right).
\end{equation*}
Then, $(a,b) \neq (0,0)$ and $X$ is winding. Considering the standard homology generators $\gamma_1,\gamma_2$ of $H_1(\mathbb{R}^2/\mathbb{Z}^2)$, $X$ has winding number $[(a,b)]$ with respect to the pulled back generators $\Phi^*\gamma_1 = (\Phi^{-1})_*\gamma_1$ and $\Phi^*\gamma_2 = (\Phi^{-1})_*\gamma_2$.
\end{proposition}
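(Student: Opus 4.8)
The plan is to produce one explicit admissible closed $1$-form annihilating $X$ and then read off its periods by transporting everything through the diffeomorphism $\Phi$. I would begin with the claim $(a,b)\neq(0,0)$: since $\Phi$ is a diffeomorphism, $T\Phi$ is a pointwise linear isomorphism, so if $(a,b)=(0,0)$ then $\Phi_* X = f\cdot 0 = 0$ would force $X\equiv 0$, whose non-vanishing set is empty; this contradicts $X$ being densely non-vanishing. Hence $(a,b)\neq(0,0)$.

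Next, to establish that $X$ is winding and to set up the period computation, I would work first downstairs on $\mathbb{R}^2/\mathbb{Z}^2$ and then pull back. Consider the constant-coefficient $1$-form $\alpha = -b\,dx + a\,dy$. It is closed, it is nowhere vanishing because $(a,b)\neq 0$, and it satisfies $\alpha\!\left(a\tfrac{\partial}{\partial x}+b\tfrac{\partial}{\partial y}\right) = -ba+ab = 0$. Set $\omega = \Phi^*\alpha$. Then $\omega$ is closed (a pullback of a closed form) and non-vanishing (as $\Phi$ is a diffeomorphism), and from $(\Phi^*\alpha)(X) = \Phi^*\!\big(\alpha(\Phi_*X)\big) = \Phi^*\!\big(f\,\alpha(a\tfrac{\partial}{\partial x}+b\tfrac{\partial}{\partial y})\big) = 0$ we obtain $\omega(X)=0$. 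By statement 1 of Proposition \ref{correctnessrotationaltransform}, a non-vanishing closed $1$-form has $[\omega]\neq 0$; thus $X$ is winding, and $\omega$ is an admissible form in the sense of Definition \ref{rotationaltransform}, so the winding number may legitimately be computed with it.

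Finally I would compute the two periods, the key point being naturality of line integration under $\Phi^{-1}$. For a loop $C$ representing $\gamma_i$, the loop $\Phi^{-1}\circ C$ represents $\Phi^*\gamma_i = (\Phi^{-1})_*\gamma_i$, and since $\Phi\circ(\Phi^{-1}\circ C)=C$ one has $\int_{\Phi^{-1}\circ C}\Phi^*\alpha = \int_C \alpha$, whence $\int_{\Phi^*\gamma_i}\omega = \int_{\gamma_i}\alpha$. Evaluating $\alpha=-b\,dx+a\,dy$ on the standard generators, represented by $t\mapsto(t,0)$ and $t\mapsto(0,t)$, gives $\int_{\gamma_1}\alpha = -b$ and $\int_{\gamma_2}\alpha = a$. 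Substituting into Definition \ref{rotationaltransform} with generators $\Phi^*\gamma_1,\Phi^*\gamma_2$, the winding number is $[(A,B)]$ with $A = -\int_{\Phi^*\gamma_2}\omega = -a$ and $B = \int_{\Phi^*\gamma_1}\omega = -b$, that is $[(-a,-b)] = [(a,b)]$ in $\mathbb{P}(\mathbb{R})$, as required.

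The main obstacle I expect is bookkeeping rather than genuine difficulty: keeping the pushforward/pullback duality, the sign conventions built into Definition \ref{rotationaltransform}, and the direction of the change-of-variables in the period integrals mutually consistent, while also invoking (via Proposition \ref{correctnessrotationaltransform}) the independence of the winding number from the chosen admissible closed form, so that computing with $\omega=\Phi^*\alpha$ really does yield the winding number of $X$ and not merely that of some particular representative.
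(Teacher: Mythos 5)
Your proposal is correct and follows essentially the same route as the paper's proof: choose the constant-coefficient closed $1$-form on $\mathbb{R}^2/\mathbb{Z}^2$ annihilating $a\,\partial_x+b\,\partial_y$, pull it back by $\Phi$ to get a non-vanishing closed form $\omega$ with $\omega(X)=0$, and compute the periods over the pulled-back generators. The only cosmetic difference is that you use $-b\,dx+a\,dy$ where the paper uses $b\,dx-a\,dy$, which merely flips the overall sign and is immaterial in $\mathbb{P}(\mathbb{R})$.
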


\begin{proof}
There exists $p \in S$ with $X|_p \neq 0$. Hence, $(a,b)\neq 0$. Now, in $\mathbb{R}^2/\mathbb{Z}^2$, consider the vector fields $\frac{\partial}{\partial x}$,$\frac{\partial}{\partial y}$ and the closed 1-forms $dx$,$dy$. Then, the form $\tilde{\omega}_0 = bdx - ady$ is closed, non-vanishing, and
\begin{equation*}
\tilde{\omega}_0\left(a\frac{\partial}{\partial x} + b\frac{\partial}{\partial y}\right) = 0.
\end{equation*}
Then, the pull-back $\omega_0 = \Phi^*\tilde{\omega}_0$ is a non-vanishing closed 1-form and satisfies $\omega_0(X) = 0$. Since $\omega_0$ is non-vanishing, $X$ is winding and $[\omega_0] \neq 0$ from Proposition \ref{correctnessrotationaltransform}. Moreover,
\begin{equation*}
\int_{\gamma_1}\tilde{\omega}_0 = b,~ \int_{\gamma_2}\tilde{\omega}_0 = -a.
\end{equation*}
Hence, the winding number of $X$ with respect to $\Phi^*\gamma_1,\Phi^*\gamma_2$ is $[(a,b)]$.
\end{proof}

\subsection{Relationship with linearisability}\label{app:rottranslinear}

In this section, we will prove the Theorem \ref{celestialmechanics} from celestial mechanics~\cite{SternbergCelestial}. To this end, and for completeness, we will first discuss the well-known linearisability techniques employed by Arnold~\cite{arnold-1966,arnold1974asymptotic}. The first of which is focuses on a single 2-torus.

\begin{proposition}\label{Arnoldcommute}
Let $S$ be a compact connected 2-manifold and let $X$ and $Y$ be vector fields which are point-wise independent and $[X,Y] = 0$. Then, there exists a diffeomorphism $\Phi : S \to \mathbb{R}^2/\mathbb{Z}^2$ and numbers $a,b \in \mathbb{R}$ such that
\begin{equation*}
\Phi_*X = a\frac{\partial}{\partial x} + b\frac{\partial}{\partial y}.
\end{equation*}
\end{proposition}

\begin{proof}
In the following, we will use the standard terminology in~\cite[Page 162 and Pages 550 to 552]{Lee}. Let $\psi^X,\psi^Y$ denote the complete flows of $X$ and $Y$ respectively. Define the map $\Psi : \mathbb{R}^2 \times S \to S$ by
\begin{equation*}
\Psi((s,t),p) = (\psi^X_s \circ \psi^Y_t)(p).    
\end{equation*}
Since $[X,Y] = 0$, the flows commute $\psi^X_s \circ \psi^Y_t = \psi^Y_t \circ \psi^X_s$ for $s,t \in \mathbb{R}^2$. Thus, $\Psi$ defines a smooth group action on $S$. Now, for $p \in S$, set $\Psi_p = \Psi(\cdot,p) : \mathbb{R}^2 \to S$. Then, at $0 \in \mathbb{R}^2$, with the curves $C_1,C_2 : \mathbb{R} \to \mathbb{R}^2$ given by $C_1(t) = (t,0)$ and $C_2(t) = (0,t)$ and linear independence of $X|_p$ and $Y|_p$, we see that $T(\Psi_p)|_0$ is invertible. Hence, since $\Psi$ is a group action, we get for all $p \in S$ and $u \in \mathbb{R}^2$ that $T(\Psi_p)|_u$ is invertible. Hence, by the Inverse Function Theorem, $\Psi_p$ is a local diffeomorphism.

In particular, the orbits $\mathbb{R}^2 \cdot p = \Psi_p(\mathbb{R}^2)$ for $p \in S$ form an open disjoint covering of $S$. Thus, by connectedness of $S$, the action $\Psi$ is transitive. That is, $S$ is a homogeneous $\mathbb{R}^2$-space with the action $\Psi$. From now on, fix a point $p \in S$. Considering the isotropy subgroup $\mathbb{R}^2_p = \{u \in \mathbb{R}^2 : \Psi(u, p) = p\}$ of $\mathbb{R}^2$, we have that the map $F : \mathbb{R}^2/\mathbb{R}^2_p \to S$ defined by $F(u+\mathbb{R}^2_p) = \Psi(u,p)$ is an equivariant diffeomorphism~\cite[Theorem 21.18]{Lee}. Then, since the vector fields $\frac{\partial}{\partial x},\frac{\partial}{\partial y}$ in $\mathbb{R}^2$ are translation invariant, they descend to vector fields $\tilde{X}$ and $\tilde{Y}$ in the quotient $\mathbb{R}^2/\mathbb{R}^2_p$.

Now, since $\Psi_p$ is a local diffeomorphism, we also get that $\mathbb{R}^2_p$ is a discrete subgroup of $\mathbb{R}^2$. Thus (see for instance~\cite[Lemma 5.14]{knapp2007advanced}), we must have either $\mathbb{R}^2_p = \mathbb{Z}u$ for some $0 \neq u \in \mathbb{R}^2$ or $\mathbb{R}^2_p = \mathbb{Z}u \oplus \mathbb{Z} v$ for some linearly independent $u,v \in \mathbb{R}^2$. Since the quotient $\mathbb{R}^2/\mathbb{R}^2_p$ is compact, we must have the latter. In particular, there exists an invertible matrix $A \in \text{GL}(2,\mathbb{R})$ such that $A\mathbb{Z}^2  = \mathbb{R}^2_p$ which induces a diffeomorphism $\hat{A} : \mathbb{R}^2/\mathbb{Z}^2 \to \mathbb{R}^2/\mathbb{R}^2_p$. By definition of $F$, one sees that, $F_* \tilde{X} = X$ and $\hat{A}_* \tilde{Y} = Y$. The result then follows with the diffeomorphism $\Phi = (\hat{A} \circ F)^{-1}$.
\end{proof}

Directly related to Arnold's structure Theorems is the following Proposition which is a variant of Proposition \ref{Arnoldcommute} for multiple tori in three dimensions.

\begin{proposition}\label{Arnoldcommute3D}
Consider the product manifold $M = S \times I$ with boundary where $S$ is a compact connected 2-manifold and $I$ is an interval with, for some $\epsilon > 0$, either $I = [0,\epsilon)$ or $I = (-\epsilon,\epsilon)$. Let $X,Y$ be commuting vector fields with $dz(X) = 0 = dz(Y)$ where $z : M \to I$ is projection onto the second factor. Then, there exist smooth functions $a,b : I \to \mathbb{R}$ and a diffeomorphism $\Phi : M \to \mathbb{R}^2/\mathbb{Z}^2 \times I$ such that
\begin{align*}
\Phi_* X &= a(z) \frac{\partial}{\partial x} + b(z) \frac{\partial}{\partial y}.
\end{align*}
\end{proposition}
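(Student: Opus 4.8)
The plan is to run the argument of Proposition \ref{Arnoldcommute} fibrewise over $I$ and then organise the resulting family of linearising charts into a single smooth diffeomorphism. As in the application, I take $X$ and $Y$ to be pointwise linearly independent (without this the statement fails, e.g. for $X=Y=0$); since $dz(X)=dz(Y)=0$ they restrict to independent commuting fields $X_z,Y_z$ on each slice $S_z=S\times\{z\}$.

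First I would record completeness: as $X$ and $Y$ are tangent to the compact slices $S_z$, their flows stay in $S_z$ and are therefore complete on $M$. Hence, exactly as in the proof of Proposition \ref{Arnoldcommute}, the map $\Psi((s,t),p)=(\psi^X_s\circ\psi^Y_t)(p)$ defines a smooth $\mathbb{R}^2$-action preserving each slice, and fixing a base point $q\in S$ I set $\tilde\Psi(u,z)=\Psi(u,(q,z))$. For each $z$ the restriction $\tilde\Psi(\cdot,z)\colon\mathbb{R}^2\to S_z$ is the universal cover of the torus $S_z$, with deck lattice $\Gamma_z=\{u:\tilde\Psi(u,z)=(q,z)\}$ a full rank-two lattice (independent of the base point, the action being abelian and transitive), and $\tilde\Psi(\cdot,z)$ descends to a diffeomorphism $\mathbb{R}^2/\Gamma_z\cong S_z$. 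Throughout, $\partial_s$ and $\partial_t$ are $\tilde\Psi$-related to $X$ and $Y$.

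The crux is to produce a smoothly varying basis of $\Gamma_z$. Fixing $z_0$ and a basis $u_0,v_0$ of $\Gamma_{z_0}$, the equation $\tilde\Psi(u,z)=(q,z)$ has, by the Implicit Function Theorem, a locally unique smooth solution through $(u_0,z_0)$ — the derivative in $u$ is the orbit map $[\,X\mid Y\,]$, invertible by independence — and since $I$ is an interval (hence simply connected) this extends to a smooth $u\colon I\to\mathbb{R}^2$ with $u(z)\in\Gamma_z$; likewise $v(z)$ through $v_0$. Writing $A(z)=[\,u(z)\mid v(z)\,]$ and $H([\xi],z)=\tilde\Psi(A(z)\xi,z)$, which is well defined $\mathbb{R}^2/\mathbb{Z}^2\times I\to M$ because $u(z),v(z)\in\Gamma_z$, each $H(\cdot,z)\colon T^2\to S$ is a continuous family with fixed target, so its degree is constant in $z$ by homotopy invariance; at $z_0$ it is a diffeomorphism, so $|\deg H(\cdot,z)|=1$ for all $z$. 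Each of $u(z),v(z)$ is nonzero, since by the uniqueness in the Implicit Function Theorem a solution sheet meeting the identically-zero solution would coincide with it. If $u(z),v(z)$ were dependent at some $z$ then $H(\cdot,z)$ would map into a one-parameter subgroup of $S_z$ and have degree $0$, a contradiction; hence $u(z),v(z)$ are independent for every $z$, and degree $\pm1$ forces the sublattice index $[\Gamma_z:\mathbb{Z}u(z)+\mathbb{Z}v(z)]$ to be $1$, i.e. $u(z),v(z)$ generate $\Gamma_z$.

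It then remains to assemble the chart. The map $H$ is a fibrewise diffeomorphism preserving $z$ with $dz(\partial_z H)=1$, so its differential is everywhere invertible and $H$ is a diffeomorphism; set $\Phi=H^{-1}$. Since $\tilde\Psi_*\partial_s=X$ and $\tilde\Psi_*\partial_t=Y$, the field $a(z)\partial_x+b(z)\partial_y$ pushes forward under $H$ to $[A(z)(a,b)^{\top}]_1\,X+[A(z)(a,b)^{\top}]_2\,Y$; choosing $(a(z),b(z))^{\top}=A(z)^{-1}(1,0)^{\top}$, smooth in $z$, makes this equal to $X$. As $\Phi$ intertwines the projections onto $I$ and $dz(X)=0$, the field $\Phi_*X$ has no $\partial_z$-component, so $\Phi_*X=a(z)\partial_x+b(z)\partial_y$ as required. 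I expect the degree/independence step yielding the smooth lattice frame to be the main obstacle; the completeness, the fibrewise application of Proposition \ref{Arnoldcommute}, and the final pushforward computation are routine.
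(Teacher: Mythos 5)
Your architecture matches the paper's: fix a base point, use the commuting flows to exhibit each slice as $\mathbb{R}^2/\Gamma_z$, produce a smoothly varying basis of the lattice $\Gamma_z$, and assemble the fibrewise charts into a global diffeomorphism. Your degree argument for showing that the continued vectors stay independent and actually generate $\Gamma_z$ (index one from $\deg=\pm1$) is a clean alternative to the paper's bookkeeping, which instead compares the curves $c_i$ with a lattice basis via a $GL(2,\mathbb{Z})$ change of homology generators. You are also right to flag that pointwise independence of $X,Y$ must be added as a hypothesis.

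There is, however, one genuine gap: the global continuation of the lattice basis. You obtain $u(z)$ by the Implicit Function Theorem as a branch of the regular level set $\{(u,z):\tilde\Psi(u,z)=(q,z)\}$ and assert that it ``extends to a smooth $u\colon I\to\mathbb{R}^2$'' because $I$ is an interval. That deduction is not valid: local solvability plus simple connectedness of the base does not prevent the branch from escaping to infinity at an interior parameter value (compare the regular level set $uz=1$ in $\mathbb{R}\times\mathbb{R}$, whose branch through $(1,1)$ does not continue to $z=0$), and since the time-$u$ flow maps can stretch the transverse direction $\partial_z$ without bound as $|u|\to\infty$, there is no cheap a priori bound on $|u'(z)|$ along the branch. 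The paper closes exactly this hole by realising the candidate basis vectors as period integrals $v_{z,i}=\bigl(\int_{C_i^z}\alpha,\int_{C_i^z}\beta\bigr)$ of the coframe dual to $(X,Y,\partial_z)$ over fixed homology generators; these are manifestly defined and smooth for every $z\in I$ and lie in $\Gamma_z$. Equivalently, you can repair your argument by defining $u(z)$ as the endpoint of the lift, through the covering $\tilde\Psi(\cdot,z)\colon\mathbb{R}^2\to S_z$, of the fixed loop $t\mapsto(\pi_S(\tilde\Psi(tu_0,z_0)),z)$ starting at $0$: path lifting always succeeds and depends smoothly on $z$, and this coincides with your IFT branch where both are defined. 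With that replacement (and the usual care with the Inverse Function Theorem at the boundary when $I=[0,\epsilon)$, which the paper handles by extending $X,Y$ past $z=0$), your proof goes through.
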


\begin{proof}
Consider the map $F : \mathbb{R}^2 \times M \to M$ given by
\begin{equation*}
F((s,t),p) = (\psi^X_s \circ \psi^Y_t)(p).
\end{equation*}
Fix a point $p_0 \in S$ and consider the map $G : \mathbb{R}^2 \times I \to M$ given by
\begin{equation*}
G((s,t),z) = F((s,t),(p_0,z)).
\end{equation*}

As seen in the proof of Proposition \ref{Arnoldcommute}, we have for all $z \in I$ that there exists a rank 2 lattice $\Lambda_z \subset \mathbb{R}^2$ such that the map $G_z : \mathbb{R}^2/\Lambda_z \to M$ given by
\begin{equation*}
G_z((s,t)+\Lambda_z) = G((s,t),z)
\end{equation*}
is an embedding on to $(S,z)$ in $M$ satisfying
\begin{align*}
T G_z \circ \frac{\partial}{\partial x} &= X|_{(s,z)}, & T G_z \circ \frac{\partial}{\partial y} &= Y|_{(s,z)}
\end{align*}
where $\frac{\partial}{\partial x}$ and $\frac{\partial}{\partial y}$ are the constant vector fields lowered to the quotient $\mathbb{R}^2/\Lambda_z$.

To turn the $G_z$s into a diffeomorphism with domain $\mathbb{R}^2/\mathbb{Z}^2 \times I$, we must first check that the lattice $\Lambda_z$ smoothly varies with $z \in I$. To this end, consider the vector field frame $(X,Y,Z)$ where $Z = \frac{\partial}{\partial z}$ is the vector field on $M$ induced by the factor $I$. Consider now the induced co-frame $(\alpha,\beta,\gamma)$ of 1-forms to $(X,Y,Z)$ so that
\begin{align*}
\alpha(X) &= 1, & \alpha(Y) &= 0, & \alpha(Z) &= 0,\\
\beta(X) &= 0, & \beta(Y) &= 1, & \beta(Z) &= 0,\\
\gamma(X) &= 0, & \gamma(Y) &= 0, & \gamma(Z) &= 1.
\end{align*}
Note that $\gamma = dz$. Now, fix smooth curves $C_1,C_2 : [0,1] \to S$ which generate the first homology of $S$. For $z \in I$ and $i \in \{1,2\}$ set $C_i^{z} = (C_i,z) : [0,1] \to M$ and
\begin{align*}
v_{1,z} &= \left(\int_{C_1^z}\alpha,\int_{C_1^z}\beta\right), & v_{2,z} = \left(\int_{C_2^z}\alpha,\int_{C_2^z}\beta \right).
\end{align*}
We claim that $(v_{1,z},v_{2,z})$ forms a lattice basis for $\Lambda_z$ for $z \in J$. Indeed, let $z \in I$. Then, we get curves $c_i : [0,1] \to \mathbb{R}^2/\Lambda_z$ induced by the embedding $G_z$ and curves $C_i$ ($i \in \{1,2\}$). We also have that
\begin{align*}
G_z^*\alpha &= dx, & G_z^*\beta &= dy
\end{align*}
where $dx,dy$ are the constant 1-forms lowered to the quotient $\mathbb{R}^2/\Lambda_z$. Now, write $\Lambda_z = \mathbb{Z}u_1 \oplus \mathbb{Z}u_2$ for some $u_1,u_2$ linearly independent in $\mathbb{R}^2$. Then, consider the curves $D_1,D_2 : [0,1] \to \mathbb{R}^2/\Lambda_z$ given by
\begin{align*}
D_1(t) &= t u_1 + \Lambda_z, & D_2(t) &= t u_2 + \Lambda_z.
\end{align*}
Then, $D_1$ and $D_2$ generate the first homology of $\mathbb{R}^2/\Lambda_z$. Hence, since both $([c_1],[c_2])$ and $([D_1],[D_2])$ are generators for the first homology, there exists a matrix $A \in GL(2,\mathbb{Z})$ such that
\begin{equation*}
[c_i] = A_{ij}[D_j].    
\end{equation*}
With this, we get that
\begin{equation*}
v_{z,i}^1 = \int_{C_i^z}\alpha = \int_{c_i}dx = A_{ij}\int_{D_j}dx = A_{ij}u_j^1
\end{equation*}
and similarly $v_{z,i}^2 = A_{ij}u_j^2$. In total, we have $v_{z,i} = A_{ij}u_j$. Hence, since $(u_1,u_2)$ is a generator for $\Lambda_z$, so is $(v_{z,1},v_{z,2})$. Hence, our claim holds.

We will now use our $v_{z,i}$ ($i \in \{1,2\}$) to make a diffeomorphism. To this end, for each $z \in J$, form the matrix
\begin{equation*}
A_z = 
\begin{pmatrix}
v_{z,1} && v_{z,2}
\end{pmatrix}.
\end{equation*}
Then, we have the smooth maps $\hat{A},\hat{B} : \mathbb{R}^2 \times I \to \mathbb{R}^2 \times I$ given by
\begin{equation*}
\hat{A}((s,t),z) = (A_z(s,t),z),~\hat{B}((s,t),z) = (A^{-1}(s,t),z)
\end{equation*}
whereby $\hat{A} \circ \hat{B} = \text{Id} = \hat{B} \circ \hat{A}$ so that in particular, $\hat{A}$ is a diffeomorphism. Moreover, we see that $TG$ is everywhere invertible and that $G(\partial (\mathbb{R}^2 \times J)) = \partial M$. Similarly to the proof of Proposition \ref{Arnoldcommute}, the tangent map $TG$ is invertible everywhere. Hence, in the case of $I = (-\epsilon,-\epsilon)$, the Inverse Function Theorem gives that $G$ is a local diffeomorphism. In the case of $I = [0,\epsilon)$, one may, for instance, globally extend the vector fields $X$ and $Y$ to vector fields $\tilde{X}$ and $\tilde{Y}$ defined on a neighborhood of $S \times (-\epsilon,\epsilon)$ tangent to the compact $S\times\{z\}$ for all $z \in I$, construct the suitable $\tilde{G}$, and apply the Inverse Function Theorem to $\tilde{G}$ using the fact that $S \times \{0\}$ is left invariant by $X$ and $Y$. Hence, in any case, we have the local diffeomorphism
\begin{equation*}
H = G \circ \hat{A}_z : \mathbb{R}^2 \times J \to M.
\end{equation*}
Now, we have the product map
\begin{equation*}
\Pi = \pi \times \text{Id}_{J} : \mathbb{R}^2 \times I \to \mathbb{R}^2/\mathbb{Z}^2 \times I
\end{equation*}
where $\pi : \mathbb{R}^2 \to \mathbb{R}^2/\mathbb{Z}^2$ is the quotient map so that $\Pi$ is an onto local diffeomorphism. With this, since $A_z = (v_{z,1},v_{z,2})$ is a matrix of generators of $\Lambda_z$ for each $z \in I$, then there exists a unique map $\Psi : \mathbb{R}^2/\mathbb{Z}^2 \times I \to M$ such that
\begin{equation*}
\Psi \circ \Pi = H. 
\end{equation*}
Hence, $\Psi$ is a local diffeomorphism. It is also clear that $\Psi$ is bijective. Hence, $\Psi$ is a diffeomorphism and the desired map is $\Phi = \Psi^{-1}$.
\end{proof}

To continue with proving Theorem \ref{celestialmechanics}, we will also use a very special case of Calibi's theorem~\cite{calabi1969intrinsic} on intrinsically harmonic 1-forms, which is the following.

\begin{proposition}\label{Calibiprop}
Let $M$ be a compact oriented manifold. Let $\omega \in \Omega^1(M)$ be a closed and non-vanishing. Then $\omega$ is intrinsically harmonic; that is, there exists a metric $g$ on $M$ such that $\delta \omega = 0$.
\end{proposition}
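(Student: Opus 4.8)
The plan is to reduce the existence of a metric making $\omega$ harmonic to a purely cohomological--positivity statement, and then to invoke Calabi's theorem for the remaining geometric input. Since $\omega$ is already closed, a metric $g$ makes $\omega$ harmonic precisely when $\delta_g\omega = 0$, and because $\star$ is a fibrewise isomorphism this is equivalent to $d(\star_g\omega) = 0$, i.e.\ to $\star_g\omega$ being a \emph{closed} $(n-1)$-form, where $n = \dim M$. First I would record which $(n-1)$-forms arise this way: for a fixed nowhere-zero $\omega$, as $g$ ranges over all metrics compatible with the orientation, $\star_g\omega$ ranges over exactly those $(n-1)$-forms $\nu$ for which $\omega\wedge\nu$ is a positive multiple of the orientation. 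One inclusion is immediate from an orthonormal coframe; for the converse, given such a $\nu$, set $\mu = \omega\wedge\nu$ (a volume form) and let $X$ be the unique vector field with $i_X\mu = \nu$, so that $\omega(X)\,\mu = \omega\wedge i_X\mu = \mu$ forces $\omega(X) = 1$. Declaring $X$ to be a unit vector orthogonal to $\ker\omega$ and choosing any fibre metric on $\ker\omega$ rescaled so that the induced volume form equals $\mu$ produces a metric $g$ with $\omega^{\sharp} = X$, $\|\omega\|_g = 1$, and hence $\star_g\omega = i_X\mu_g = i_X\mu = \nu$. There is no decomposability obstruction, since in codimension one every $(n-1)$-form is automatically decomposable. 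Thus the whole statement reduces to finding a closed $(n-1)$-form $\nu$ with $\omega\wedge\nu > 0$ everywhere.

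Next I would dualise this last problem. Fixing an auxiliary volume form $\mu_0$, writing $\nu = i_X\mu_0$ sets up a bijection between $(n-1)$-forms and vector fields under which $\omega\wedge\nu > 0$ becomes $\omega(X) > 0$ and $d\nu = 0$ becomes $\mathcal{L}_X\mu_0 = 0$. So it suffices to produce a \emph{volume-preserving} vector field $X$ that is positively transverse to the hyperplane field $\ker\omega$. When the class $[\omega]$ is rational this is concrete: by Tischler's theorem $\omega$ may be taken (after a $C^0$-small, nonsingularity-preserving adjustment of its cohomology class) to be the pullback $f^\ast d\theta$ of a fibration $f : M \to S^1$; by Moser's lemma the monodromy is isotopic to a volume-preserving diffeomorphism of the fibre, and the corresponding transverse suspension field is then volume-preserving with $d\theta(X) > 0$. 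For a general class I would approximate $[\omega]$ by rational classes, using that rational points are dense in $H^1(M;\mathbb{R})$ and that both nonsingularity of the form and positivity of $\omega(X)$ are $C^0$-open conditions on a compact manifold, so that an $X$ built for a nearby rational representative still satisfies $\omega(X) > 0$ for the original (closed) $\omega$.

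The genuinely nontrivial step is the construction of the closed form $\nu$, equivalently of the volume-preserving transversal $X$: this requirement of pointwise positivity together with closedness is exactly the content of Calabi's intrinsic characterisation, and the non-vanishing hypothesis on $\omega$ is precisely what guarantees that the relevant transitivity (Sullivan-type) condition holds, so that no foliation cycle obstructs $\nu$. Everything surrounding it---the reduction to $d(\star_g\omega)=0$, the fibrewise realisation of a prescribed $\star_g\omega$, and the duality with volume-preserving fields---is elementary linear algebra together with Moser's lemma. Accordingly I would present the reduction in full and then cite Calabi~\cite{calabi1969intrinsic} (with the Tischler--Moser argument as the explicit route in the rational case) for the existence of $\nu$, since a self-contained treatment of the general case is exactly Calabi's theorem and lies beyond what the remaining results require.
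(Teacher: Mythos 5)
Your proposal is correct, but it is worth noting that the paper offers no proof of this proposition at all: it is stated purely as a citation of Calabi~\cite{calabi1969intrinsic}, specialised to the nowhere-vanishing case where the transitivity hypothesis of Calabi's theorem is automatic. What you have written is essentially a reconstruction of the standard proof of that theorem in this special case, and the individual steps are sound: the equivalence $\delta_g\omega=0\Leftrightarrow d(\star_g\omega)=0$, the fibrewise realisation of any $(n-1)$-form $\nu$ with $\omega\wedge\nu>0$ as $\star_g\omega$ for a suitable $g$ (your computation $\omega\wedge i_X\mu=\omega(X)\mu$ and the rescaling of the fibre metric on $\ker\omega$ so that $\mu_g=\mu$ are both correct), and the duality $\nu=i_X\mu_0$ converting the problem into finding a volume-preserving positive transversal. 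Your Tischler--Moser route in the rational case, followed by density of rational classes and the $C^0$-openness of the conditions $\omega\neq 0$ and $\omega(X)>0$ on a compact manifold, in fact yields a self-contained proof, so the closing appeal to Calabi is redundant rather than load-bearing; if you keep the approximation step you should make explicit that the openness argument needs the uniform bound $\min_M\omega'(X)>0$ against $\|\omega-\omega'\|_{C^0}\sup_M\|X\|$, and that the volume form preserved by the suspension field need not be the auxiliary $\mu_0$ you fixed at the outset --- which is harmless, since any volume form preserved by $X$ serves equally well in the duality. Compared with the paper, your version buys a transparent, essentially elementary argument at the cost of length; the paper's choice to cite Calabi outright is the more economical one given that the proposition is only used as an ingredient in the proof of Theorem~\ref{celestialmechanics}.
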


In particular, Proposition \ref{Calibiprop} and the Riemann-Roch Theorem give the following.

\begin{proposition}
Let $S$ be a oriented 2-torus. Let $\omega \in \Omega^1(S)$ be a closed 1-form on $S$ which is non-zero. Then $\omega$ is intrinsically harmonic if and only if $\omega$ is non-vanishing.
\end{proposition}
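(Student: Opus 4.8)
The plan is to prove the two implications separately, handling the direction ``non-vanishing $\Rightarrow$ intrinsically harmonic'' with Calabi's theorem (Proposition \ref{Calibiprop}) and the converse with the classical Riemann--Roch theorem. This is the constant-$P$ shadow of the argument behind Theorem \ref{P-harmonic 1-forms}, but it avoids Bers' pseudo-analytic machinery entirely, since for $P = 1$ the relevant objects are genuinely holomorphic differentials.

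For the forward direction I would simply invoke Proposition \ref{Calibiprop} verbatim: as $\omega$ is closed and, by hypothesis, non-vanishing on the compact oriented manifold $S$, that proposition yields a metric $g$ with $\delta_g\omega = 0$, which is exactly intrinsic harmonicity. No further work is needed here.

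For the converse, suppose $\omega$ is intrinsically harmonic, so there is a metric $g$ with $\delta_g\omega = 0$; together with $d\omega = 0$ this makes $\omega$ a $g$-harmonic $1$-form. Using the codifferential formula $\delta = -\star d\star$ on $1$-forms of an oriented Riemannian surface (the case $n=2$, $k=1$ of the paper's convention), the condition $\delta_g\omega = 0$ is equivalent to $d\star\omega = 0$. I would then endow $S$ with the Riemann surface structure determined by the conformal class of $g$ via Proposition \ref{RiemanniantoRiemann}, and form the complex $1$-form $\hat\omega = \omega + i\star\omega$, which is of type $(1,0)$ exactly as in the proof of Theorem \ref{P-harmonic 1-forms}. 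Since $d\hat\omega = d\omega + i\,d\star\omega = 0$, the form $\hat\omega$ is a closed $(1,0)$-form, hence a holomorphic differential on the genus-$1$ Riemann surface $S$. The key step is then the zero count from Riemann--Roch: a nonzero holomorphic differential on a compact Riemann surface of genus $g$ has exactly $2g-2$ zeros counted with multiplicity, so for $g = 1$ it has none and is therefore nowhere vanishing. As $\omega$ is assumed non-zero, $\hat\omega = \omega + i\star\omega \neq 0$, so $\hat\omega$ is non-vanishing; and because $\star^2 = -\mathrm{Id}$ on $1$-forms makes $\star$ a pointwise rotation, $\omega|_p$ and $\star\omega|_p$ vanish simultaneously, whence $\hat\omega|_p = 0$ if and only if $\omega|_p = 0$. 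The non-vanishing of $\hat\omega$ thus forces $\omega$ to be non-vanishing.

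The main obstacle I anticipate is the translation step: verifying that $\hat\omega$ is genuinely of type $(1,0)$ and that closedness of a $(1,0)$-form is precisely the holomorphicity condition (locally $f\,dz$ with $\partial_{\bar z}f = 0$), so that the abstract de Rham closedness becomes a statement about a holomorphic differential to which the genus-$1$ zero count applies. Once this identification with Riemann surface language is secured, the Riemann--Roch degree computation and the pointwise rotation argument relating the zeros of $\hat\omega$ and $\omega$ are routine.
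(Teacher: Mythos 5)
Your proposal is correct and follows essentially the same route as the paper: the forward direction is Calabi's theorem (Proposition \ref{Calibiprop}), and the converse endows $S$ with the Riemann surface structure from Proposition \ref{RiemanniantoRiemann}, forms the holomorphic differential $\omega + i\star\omega$, and applies the genus-$1$ case of Riemann--Roch to conclude it is identically zero or nowhere vanishing. You merely spell out a few steps the paper leaves implicit (the $2g-2$ zero count and the pointwise equivalence of zeros of $\omega$ and $\omega + i\star\omega$).
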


\begin{proof}[Proof of Proposition \ref{Calibitorus}]
Suppose that $\omega$ is intrinsically harmonic. Let $g$ be a metric on $S$ for which $\omega$ is harmonic on $(S,g)$. Then, as in Proposition \ref{RiemanniantoRiemann}, there exists a maximal holomorphic atlas $\mathcal{A}$ compatible with the smooth structure on $S$ where the component functions $x,y$ of holomorphic charts satisfy $\star dx = dy$ where $\star$ is the Hodge star of $S$ in $U$. So, $S$ is now a Riemann surface of genus $1$, consider the complex 1-form $W = \omega+i\star\omega$. This 1-form is holomorphic and thus, from the Riemann-Roch Theorem, $W$ is either identically zero or non-vanishing. The other direction is Proposition \ref{Calibiprop}.
\end{proof}

The last result we need to prove Theorem \ref{celestialmechanics} is the following solvability of the cohomological equation~\cite[Proposition 2.6]{KocsardCohomological}.

\begin{proposition}\label{cohomologicalsolvability}
Let $S$ be a 2-torus, $\Phi : S \to \mathbb{R}^2/\mathbb{Z}^2$ be a diffeomorphism and set $X$ such that
\begin{equation*}
\Phi_* X = a\frac{\partial}{\partial x} + b\frac{\partial}{\partial y}
\end{equation*}
where $(a,b)$ is a Diophantine vector. Then, for any $v \in C^{\infty}(S)$, there exists $c \in \mathbb{R}$ and a solution $u \in C^{\infty}(S)$ to the cohomological equation
\begin{equation*}
Z(u) = v - c.
\end{equation*}
That is, setting $\mu = \Phi^*\mu_0$, where $\mu_0$ is the standard volume form on $\mathbb{R}^2/\mathbb{Z}^2$, for any $v \in C^{\infty}(S)$, there exists a solution $u \in C^{\infty}(S)$ to the cohomological equation
\begin{equation*}
X(u) = v,
\end{equation*}
if and only if
\begin{equation*}
\int_S v\mu = 0.
\end{equation*}
\end{proposition}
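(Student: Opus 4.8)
The plan is to conjugate via $\Phi$ to the constant field on the standard flat torus and then solve by Fourier series, with the Diophantine hypothesis controlling the small divisors. First I would set $\tilde{X} = a\partial_x + b\partial_y = \Phi_* X$ and, given $v \in C^\infty(S)$, put $\tilde{v} = v \circ \Phi^{-1}$. Using the standard fact that $\Phi_* X = \tilde{X}$ implies $(Xg)\circ\Phi^{-1} = \tilde{X}(g\circ\Phi^{-1})$ for every $g \in C^\infty(S)$, the equation $X(u) = v - c$ is equivalent to $\tilde{X}(\tilde{u}) = \tilde{v} - c$ for $\tilde{u} = u \circ \Phi^{-1}$ on $\mathbb{R}^2/\mathbb{Z}^2$; a smooth solution $\tilde{u}$ of the latter pulls back to a smooth solution $u = \tilde{u}\circ\Phi$ of the former. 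This reduces everything to the constant-coefficient problem on the standard torus.

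Next I would expand in Fourier series. Writing $\tilde{v} = \sum_{k\in\mathbb{Z}^2}\hat{v}_k e^{2\pi i \langle k,\cdot\rangle}$ and seeking $\tilde{u} = \sum_k \hat{u}_k e^{2\pi i \langle k,\cdot\rangle}$, the operator $\tilde{X}$ acts diagonally, sending the $k$-th mode to $2\pi i\langle k,(a,b)\rangle$ times itself. Matching coefficients against $\tilde{v} - c$ forces the choice $c = \hat{v}_0$ (since the left-hand side has no constant mode) and, for $k \neq 0$,
\[
\hat{u}_k = \frac{\hat{v}_k}{2\pi i\,\langle k,(a,b)\rangle}.
\]
The Diophantine hypothesis enters first to guarantee $\langle k,(a,b)\rangle \neq 0$ for all $k \neq 0$, so that these coefficients are well defined.

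The hard part will be to verify that this formal series defines a genuinely smooth function; this is where the small divisors $\langle k,(a,b)\rangle$ must be controlled quantitatively rather than merely shown nonzero. I would invoke the Diophantine bound $|\langle k,(a,b)\rangle| \geq \gamma\|k\|^{-\tau}$ to obtain $|\hat{u}_k| \leq (2\pi\gamma)^{-1}\|k\|^{\tau}|\hat{v}_k|$. Since $\tilde{v}$ is smooth, for every $N$ there is a constant $C_N$ with $|\hat{v}_k| \leq C_N\|k\|^{-N}$, so $|\hat{u}_k| \leq (2\pi\gamma)^{-1}C_N\|k\|^{\tau-N}$ also decays faster than any polynomial once $N$ is taken large. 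By the standard characterisation of $C^\infty(\mathbb{R}^2/\mathbb{Z}^2)$ via rapid decay of Fourier coefficients, $\tilde{u}$ is smooth, and this is precisely where the polynomial lower bound on the divisors (and not merely their non-vanishing) is indispensable.

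Finally I would treat the iff reformulation. A change of variables gives $\int_S v\,\mu = \int_{\mathbb{R}^2/\mathbb{Z}^2}\tilde{v}\,\mu_0 = \hat{v}_0 = c$, so $\int_S v\,\mu = 0$ is exactly the condition $c = 0$, in which case the first part produces a solution of $X(u) = v$. For the converse I would use that $X$ preserves $\mu$ (because $\tilde{X}$ is constant-coefficient and divergence-free, hence preserves $\mu_0$, and $\mu = \Phi^*\mu_0$); then for any solution of $X(u) = v$, Cartan's formula and Stokes' theorem give $\int_S v\,\mu = \int_S (\mathcal{L}_X u)\,\mu = \int_S \mathcal{L}_X(u\mu) = \int_S d\,i_X(u\mu) = 0$, establishing necessity.
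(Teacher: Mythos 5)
The paper does not actually prove this proposition: it is quoted directly from the literature (Kocsard, Proposition 2.6) and used as a black box in the proof of Theorem \ref{celestialmechanics}. Your argument is the standard small-divisors proof of that cited result, and it is correct: the conjugation by $\Phi$ reduces to the constant-coefficient field on $\mathbb{R}^2/\mathbb{Z}^2$, the operator diagonalises on Fourier modes with eigenvalues $2\pi i\langle k,(a,b)\rangle$, the constant $c$ is forced to be $\hat v_0$, and the Diophantine lower bound $|\langle k,(a,b)\rangle|\geq\gamma\|k\|^{-\tau}$ converts rapid decay of $\hat v_k$ into rapid decay of $\hat u_k$, hence smoothness of $\tilde u$. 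Your necessity argument via $\mathcal{L}_X\mu=0$, Cartan's formula and Stokes on the closed surface is also sound (one could equally integrate by parts directly against the constant field upstairs). Two cosmetic remarks: the statement's ``$Z(u)=v-c$'' is evidently a typo for ``$X(u)=v-c$'', which you correctly read through; and in the change of variables $\int_S v\,\mu=\int_{\mathbb{R}^2/\mathbb{Z}^2}\tilde v\,\mu_0$ you implicitly take $\Phi$ orientation-preserving, but since only the vanishing of the integral matters the possible sign is harmless.
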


We will now prove Theorem \ref{celestialmechanics}.

\begin{proof}[Proof of Theorem \ref{celestialmechanics}]
Let $X$ be a non-vanishing vector field on a 2-torus $S$.

\begin{proof}[Statement 1 implies statement 2]
Suppose that $X$ preserves a top-form $\mu \in \Omega^2(S)$. Setting $\omega = i_X\mu$, we have $\omega(X) = 0$ and
\begin{equation*}
d\omega = di_X\mu = i_Xd\mu + di_X\mu = \mathcal{L}_X\mu = 0.
\end{equation*}
\end{proof}

\begin{proof}[Statement 2 implies statement 3]
Suppose that $X$ is winding. Let $\omega$ be a closed non-vanishing 1-form such that $\omega(X) = 0$. Since $\omega$ is non-vanishing, by Theorem \ref{Calibitorus}, there exists an Riemannian metric $g$ on $S$ for which $\omega$ is harmonic on $(S,g)$. Then, consider $\eta = \star \omega$. Then, $\eta$ is closed and the top form $\omega \wedge \eta$ is non-vanishing. Thus, since $X$ is non-vanishing, $i_X(\omega \wedge \eta)$ is non-vanishing. On the other hand,
\begin{equation*}
i_X(\omega \wedge \eta) = \omega(X)\eta - \eta(X)\omega = -\eta(X)\omega.
\end{equation*}
Hence, $\eta(X)$ is non-vanishing. In particular, we have
\begin{equation*}
\omega(X/\eta(X)) = 0,~ \eta(X/\eta(X)) = 1.    
\end{equation*}
From this, consider the unique vector field $Y$ on $S$ such that
\begin{equation*}
\omega(Y) = 1,~ \eta(Y) = 0.    
\end{equation*}
Then, $X$ and $Y$ are point-wise independent and $[X/\eta(X),Y] = 0$. Hence, from Proposition \ref{Arnoldcommute}, $X/\eta(X)$ is linearisable and so $X$ is semi-linearisable.
\end{proof}

\begin{proof}[Statement 3 implies statement 1]
Suppose that $X$ is semi-linearisable. Then, for some $0 < f \in C^{\infty}(S)$, numbers $a,b \in \mathbb{R}$ and diffeomorphism $\Phi : S \to \mathbb{R}^2/\mathbb{Z}^2$ such that
\begin{equation*}
\Phi_*(X/f) = a\frac{\partial}{\partial x} + b\frac{\partial}{\partial y}.
\end{equation*}
Then, considering the standard top-form $\mu_0 \in \Omega^2(\mathbb{R}^2/\mathbb{Z}^2)$, and setting $\mu = \frac{1}{f}\Phi^{*}\mu_0$, we see that $\mathcal{L}_X \mu = 0$.
\end{proof}

Lastly, suppose that $X$ has Diophantine winding number. Then, from the above, together with Proposition \ref{correctnessrotationaltransform} and Proposition \ref{compatibilityrotationaltransform}, we get for some $0 < f \in C^{\infty}(S)$, Diophantine vector $(a,b) \in \mathbb{R}^2$, and diffeomorphism $\Phi : S \to \mathbb{R}^2/\mathbb{Z}^2$, that
\begin{equation*}
\Phi_*(X/f) = a\frac{\partial}{\partial x} + b\frac{\partial}{\partial y}.
\end{equation*}

With this, consider the vector fields $\tilde{X}$ and $Y$ such that,
\begin{align*}
\Phi_*\tilde{X} &= a\frac{\partial}{\partial x} + b\frac{\partial}{\partial y},& \Phi_*Y &= -b\frac{\partial}{\partial x} + a\frac{\partial}{\partial y}.
\end{align*}
Since $(a,b)$ is Diophantine, so is $(-b,a)$. With this, let $u \in C^{\infty}(S)$ and consider the vector field
\begin{equation*}
Y_u = u X + Y.
\end{equation*}
First, notice that $X$ and $Y_u$ are point-wise linearly independent. Moreover,
\begin{align*}
[X,Y_u] &= [f \tilde{X}, Y_u]\\
&= [f \tilde{X}, u f \tilde{X} + Y]\\
&= [f \tilde{X}, u f \tilde{X}] + [f \tilde{X}, Y]\\
&= (f \tilde{X})(u)(f \tilde{X}) + u[f \tilde{X},f \tilde{X}] + (-Y(f)\tilde{X} + f[\tilde{X},Y])\\
&= f^2\tilde{X}(u) X -Y(f)\tilde{X}\\
&= f^2(\tilde{X}(u)-Y(-1/f))\tilde{X}.
\end{align*}
Now, set $\mu = \Phi^*\mu_0$ where $\mu_0$ is the standard top form on $\mathbb{R}^2/\mathbb{Z}^2$. Setting $v = Y(-1/f)$, using Proposition \ref{cohomologicalsolvability} on $Y$, we have that
\begin{equation*}
\int_S Y(-1/f)\mu = \int_S v \mu = 0.
\end{equation*}
Hence, using Proposition \ref{cohomologicalsolvability} on $X$, there exists a solution $u \in C^{\infty}(S)$ to the cohomological equation
\begin{equation*}
\tilde{X}(u) = v =  Y(-1/f).
\end{equation*}
Thus, with this choice of $u$,
\begin{equation*}
[X, Y_u] = f^2(\tilde{X}(u)-Y(-1/f))X = 0.
\end{equation*}
Thus, from Proposition \ref{Arnoldcommute}, $X$ is linearisable.
\end{proof}

\bibliography{Linearisability_of_divergence-free_fields.bib}

\end{document}